\newtheorem{definition}{Definition}[section]
\newtheorem{theorem}[definition]{Theorem}
\newtheorem{lemma}[definition]{Lemma}
\newtheorem{corollary}[definition]{Corollary}
\newtheorem{remark}[definition]{Remark}
\newtheorem{example}[definition]{Example}
\newtheorem{conjecture}[definition]{Conjecture}
\newtheorem{problem}[definition]{Problem}
\newtheorem{note}[definition]{Note}
\newtheorem{assumption}[definition]{Assumption}
\newtheorem{proposition}[definition]{Proposition}
\begin{document}
\title{\bf 
Tridiagonal pairs, alternating elements,\\ 
and distance-regular graphs
}
\author{
Paul Terwilliger 
}
\date{}

\maketitle
\begin{abstract} The positive part $U^+_q$ of $U_q(\widehat{\mathfrak{sl}}_2)$ has a presentation with two generators $W_0$, $W_1$ and two relations called the $q$-Serre relations.
The algebra $U^+_q$ contains some elements, said to be alternating.
There are four kinds of alternating elements, denoted
$\lbrace  W_{-k}\rbrace_{k\in \mathbb N}$,
$\lbrace  W_{k+1}\rbrace_{k\in \mathbb N}$,
$\lbrace  G_{k+1}\rbrace_{k\in \mathbb N}$,
$\lbrace {\tilde G}_{k+1}\rbrace_{k \in \mathbb N}$. The alternating elements of each kind mutually commute.
A tridiagonal pair is an ordered pair of diagonalizable linear maps $A, A^*$ on a nonzero, finite-dimensional vector space $V$,
that each act in a (block) tridiagonal fashion on the eigenspaces of the other one. Let $A$, $A^*$ denote a tridiagonal pair on $V$.
Associated with this pair are
six well-known direct sum decompositions of $V$; these are the
eigenspace decompositions of $A$ and $A^*$, along  with four decompositions of $V$ that are often called split.
 In our main results, we assume that $A$, $A^*$ has $q$-Serre type. Under this assumption $A$, $A^*$
satisfy the $q$-Serre relations, and $V$ becomes an irreducible $U^+_q$-module on which $W_0=A$ and $W_1=A^*$.
We describe how the
alternating elements of $U^+_q$ act on the above six decompositions of $V$. We show that for each decomposition, every
alternating element acts in either a (block) diagonal, (block) upper bidiagonal, (block) lower bidiagonal, or (block) tridiagonal fashion.
We investigate  two special cases in detail. In the first case the eigenspaces of $A$ and $A^*$
all have dimension one. In the second case $A$ and $A^*$ are obtained by adjusting the adjacency matrix and a dual adjacency matrix
of a distance-regular graph that has classical parameters and is formally self-dual.

\bigskip

\noindent
{\bf Keywords}. Tridiagonal pair; alternating elements; $q$-Serre relations; distance-regular graph.
\hfil\break
\hfil\break
\noindent {\bf 2020 Mathematics Subject Classification}. 
Primary: 17B37. Secondary: 05E30.

 \end{abstract}
\section{Introduction}
\noindent Our point of departure is the following remarkable fact. Let $\mathbb F$ denote a field, and fix a nonzero $b \in \mathbb F$
that is not a root of unity. Define an algebra over $\mathbb F$ by generators $W_0, W_1$ and relations
\begin{align} 
\lbrack W_0, \lbrack W_0, \lbrack W_0, W_1\rbrack_b \rbrack_{b^{-1}} \rbrack=0,
\qquad \qquad 
\lbrack W_1, \lbrack W_1, \lbrack W_1, W_0\rbrack_b \rbrack_{b^{-1}}
\rbrack=0,\label{eq:nqSerre}
\end{align}
where
\begin{align*}
\lbrack X, Y \rbrack=  XY- YX, \qquad \qquad
\lbrack X, Y \rbrack_b= b XY- YX.
\end{align*}
Using $W_0$, $W_1$ and the equations below, we recursively define some elements
\begin{align}
\label{eq:nWWGGnIntro}
\lbrace  W_{-k}\rbrace_{k\in \mathbb N}, \quad 
\lbrace  W_{k+1}\rbrace_{k\in \mathbb N}, \quad
\lbrace  G_{k+1}\rbrace_{k\in \mathbb N}, \quad
\lbrace {\tilde G}_{k+1}\rbrace_{k \in \mathbb N}
\end{align}
in the following order:
\begin{align*}
W_0, \quad W_1, \quad G_1, \quad \tilde G_1, \quad W_{-1}, \quad W_2, \quad
 G_2, \quad \tilde G_2, \quad W_{-2}, \quad W_3, \quad \ldots
\end{align*}
For $n\geq 1$,
\begin{align*}
&G_n = \frac{\sum_{k=0}^{n-1} W_{-k} W_{n-k} b^{-k} 
-
\sum_{k=1}^{n-1} G_k  \tilde G_{n-k}  b^{-k} }{1+b^{-n}}
+ 
\frac{\lbrack W_n, W_0\rbrack }{(1+b^{-n})(1-b^{-1})},
\\
\tilde G_n &= G_n + \frac{ \lbrack W_0,W_n\rbrack}{1-b^{-1}},
\qquad \quad 
W_{-n} = \frac{ \lbrack W_0, G_n\rbrack_b}{b-1}, 
\qquad \quad 
W_{n+1} = \frac{\lbrack G_n, W_1\rbrack_b}{b-1}. 
\end{align*}
The remarkable fact is that for $k, \ell \in \mathbb N$,
\begin{align*}
&
\lbrack  W_{-k},  W_{-\ell}\rbrack=0,  \qquad 
\lbrack  W_{k+1},  W_{\ell+1}\rbrack= 0,
\\
&
\lbrack  G_{k+1},  G_{\ell+1}\rbrack=0,
\qquad 
\lbrack {\tilde G}_{k+1},  {\tilde G}_{\ell+1}\rbrack= 0.
\end{align*} 
The above fact is from
\cite[Proposition~5.10]{alternating}.
The relations \eqref{eq:nqSerre} are called the $q$-Serre relations, where $q^2=b$. The defined algebra is denoted by $U^+_q$,
and called the positive part of  $U_q(\widehat{\mathfrak{sl}}_2)$; see Definition \ref{def:nposp} below.
 The elements \eqref{eq:nWWGGnIntro} are called the alternating elements of $U^+_q$ \cite[Definition~5.1]{alternating}.
These elements satisfy the relations in
\cite[Proposition~5.7]{alternating},
\cite[Proposition~5.10]{alternating},
\cite[Proposition~5.11]{alternating},
\cite[Proposition~6.3]{alternating},
\cite[Proposition~8.1]{alternating}.
\medskip

\noindent 
The alternating elements get their name in the following way. Let $x$, $y$ denote noncommuting indeterminates. 
Let $F$ denote the free algebra  with generators $x,y$.  By a letter we mean $x$ or $y$. For $n\in \mathbb N$,
by a word of length $n$  we mean a product of letters $a_1a_2\cdots a_n$. The words form a linear basis for $F$.
In
\cite{rosso1, rosso} M. Rosso introduced an algebra structure on $F$ called the $q$-shuffle algebra.
For letters $u,v$ their $q$-shuffle product is given by
$u\star v = uv+q^{(u,v) }vu$, where
$(u,v)=2$ if $u=v$ and $(u,v)=-2$ if
 $u\not=v$. In \cite[Theorem~15]{rosso},
Rosso gave an injective algebra homomorphism 
from $U^+_q$ into the $q$-shuffle algebra
$F$, that sends $W_0\mapsto x$ and $W_1\mapsto y$.
By
\cite[Definition~5.2]{alternating}
the homomorphism sends
\begin{align*}
&W_0 \mapsto x, \qquad W_{-1} \mapsto xyx, \qquad W_{-2} \mapsto xyxyx, \qquad \ldots
\\
&W_1 \mapsto y, \qquad W_{2} \mapsto yxy, \qquad W_{3} \mapsto yxyxy, \qquad \ldots
\\
&G_{1} \mapsto yx, \qquad G_{2} \mapsto yxyx,  \qquad G_3 \mapsto yxyxyx, \qquad \ldots
\\
&\tilde G_{1} \mapsto xy, \qquad \tilde G_{2} \mapsto 
xyxy,\qquad \tilde G_3 \mapsto xyxyxy, \qquad \ldots
\end{align*}
We used this homomorphism
to obtain the relations in \cite{alternating} mentioned above.
\medskip

\noindent Our discovery  of the alternating elements \cite{alternating} was motivated by the groundbreaking work of Baseilhac/Koizumi \cite{BK05}
and Baseilhac/Shigechi \cite{basnc}
concerning the $q$-Onsager algebra $O_q$ \cite{bas1,qSerre}.
The algebras $U^+_q$, $O_q$ have a similar structure; they both belong to a family of
algebras called the tridiagonal algebras  \cite[Definition~3.9]{qSerre}. The algebra $U^+_q$ is considerably less complicated than $O_q$, and
it is natural to view $U^+_q$ as a toy model for $O_q$. With this view in mind, we now discuss \cite{BK05}, \cite{basnc}.
In \cite{BK05} Baseilhac and Koizumi investigate  boundary
integrable systems with hidden symmetries. In \cite[Section~2.1]{BK05} they use an RKRK reflection
equation to define an algebra \cite[Line (4)]{BK05} that is now denoted by $\mathcal O_q$ and called the alternating central extension of $ O_q$.
In \cite[Definition~3.1]{basnc}  Baseilhac and Shigechi give a presentation of  $\mathcal O_q$ by generators and relations.
 This presentation resembles
 \cite[Proposition~5.7]{alternating},
\cite[Proposition~5.10]{alternating}. Moreover \cite[Proposition~3.1]{basnc} resembles
\cite[Proposition~5.11]{alternating}.
After \cite{BK05} and \cite{basnc} appeared, we tried to understand these works by searching for analogous results about $U^+_q$. In this
search 
we considered the Rosso embedding of $U^+_q$ into a $q$-shuffle algebra, and this lead us to discover the alternating elements of $U^+_q$. We believe that this
discovery would not have occurred without the guides \cite{BK05}, \cite{basnc}.
\medskip

\noindent Next we summarize what is known about the alternating elements of $U^+_q$. 
We will refer to a Poincar\'e-Birkhoff-Witt (or PBW) basis for $U^+_q$ due to Damiani \cite{damiani}, and a related PBW basis for $U^+_q$ due to Beck \cite[Proposition~6.1]{beck}.
In \cite[Theorem~10.1]{alternating} we showed that the alternating elements 
$\lbrace  W_{-k}\rbrace_{k\in \mathbb N}$, 
$\lbrace {\tilde G}_{k+1}\rbrace_{k \in \mathbb N}$,
$\lbrace  W_{k+1}\rbrace_{k\in \mathbb N}$ form a PBW basis for $U^+_q$; this PBW basis is said to be alternating.
In \cite[Section 11]{alternating} we related the alternating PBW basis to the Damiani PBW basis.
In \cite[Proposition~9.11]{beckPBW} we related the alternating PBW basis  to the Beck PBW basis.
In \cite[Theorem~9.15]{alternating} we expressed the elements $\lbrace G_{k+1} \rbrace_{k \in \mathbb N}$ in terms of the alternating PBW basis.
This expression involves some elements $\lbrace D_n \rbrace_{n \in \mathbb N}$ that we defined recursively; in \cite[Theorem~1.11]{peterRuan}
Chenwei Ruan displayed the elements $\lbrace D_n \rbrace_{n \in \mathbb N}$  in closed form.
In \cite[Theorem~2.10]{basSecond}, Baseilhac interprets the alternating elements of $U^+_q$ using an RKRK reflection equation.
In \cite[Section~3]{basSecond} Baseilhac relates the alternating elements of $U^+_q$  to the Drinfeld generators of $U_q(\widehat{\mathfrak{sl}}_2)$.
In \cite{altCE} we used  the alternating elements of $U^+_q$ 
to construct an algebra $\mathcal U^+_q$, called the alternating central extension of $U^+_q$. The algebra $\mathcal U^+_q$ is discussed in \cite{basSecond, compactUqp, factorUq}.
\medskip

\noindent In the present paper, we interpret the alternating elements of $U^+_q$ using the concept of a tridiagonal pair (or TD pair); see Definition \ref{def:tdp} below.
Roughly speaking, a TD pair is an ordered pair $A$, $A^*$ of 
diagonalizable linear maps on a nonzero finite-dimensional vector space $V$, that each act on the eigenspaces of the
other one in a (block) tridiagonal fashion. Let $A$, $A^*$ denote a TD pair on $V$. Associated with this TD pair are
six well-known direct sum decompositions of $V$; these are the
eigenspace decompositions of $A$ and $A^*$, along  with four decompositions of $V$ that are often called split \cite[Section~4]{TD00}. On each split decomposition of $V$,
one of $A$,  $A^*$ acts  in a (block) upper bidiagonal fashion, and the other one acts in a  (block) lower bidiagonal fashion \cite[Lemma~5.1]{tdanduqsl2hat}.
As we will see, it is natural to  represent the six decompositions of $V$ by the edges of a tetrahedron, with the eigenspace decompositions of 
$A$ and $A^*$ represented by a pair of opposite edges. The resulting diagram is called the tetrahedron diagram.
In our main results, we assume that the TD pair $A$, $A^*$ has $q$-Serre type \cite[Definition~2.6]{nonnil}. Under this assumption
 $A$, $A^*$ satisfy the $q$-Serre relations, so  $V$ becomes 
a $U^+_q$-module on which $W_0=A$  and $W_1=A^*$. It turns out that the $U^+_q$-module $V$ is irreducible; see Corollary \ref{cor:modU} below.
We describe how the
alternating elements of $U^+_q$ act on the six decompositions of $V$ from the tetrahedron diagram. We show that for each decomposition, every
alternating element acts in either a (block) diagonal, (block) upper bidiagonal, (block) lower bidiagonal, or (block) tridiagonal fashion.
Our main results on this topic are Theorems \ref{thm:WWaction}, \ref{thm:GGaction}. 
In view of the remarkable fact that we began with, it is natural to seek bases for $V$ of the following types:
\begin{enumerate}
\item[\rm (i)] a basis of common eigenvectors for $\lbrace W_{-k} \rbrace_{k\in \mathbb N}$;
\item[\rm (ii)] a basis of common eigenvectors for $\lbrace W_{k+1} \rbrace_{k\in \mathbb N}$;
\item[\rm (iii)] a basis of common eigenvectors for $\lbrace G_{k+1} \rbrace_{k\in \mathbb N}$;
\item[\rm (iv)] a basis of common eigenvectors for $\lbrace {\tilde G}_{k+1} \rbrace_{k\in \mathbb N}$.
\end{enumerate}
The above bases exist sometimes, but not always. We investigate  two cases in which the bases exist.
In the first  case, we assume that  $A$, $A^*$ is a Leonard pair; this means
that the eigenspaces of $A$ and $A^*$ all have dimension one. We show that for $k \in \mathbb N$
the element $W_{-k} $ (resp. $W_{k+1}$) acts on $V$ as a linear combination of $A, I$ (resp. $A^*, I$). We obtain a similar result for
$G_{k+1}$ and $\tilde G_{k+1}$. For the linear combinations that show up, we express the coefficients using a generating function. Our main results on this topic are
Theorems \ref{thm:third}, \ref{thm:exp}. 
For the second case, start with
a distance-regular graph $\Gamma$ that has diameter $d\geq 3$ and classical parameters $(d,b,\alpha, \beta)$ with $b \not=1$ and $\alpha = b-1$;
the condition on $\alpha$ implies that $\Gamma$ is formally self-dual in the sense of \cite[p.~49]{BCN}.
After an affine adjustment, the adjacency matrix and any dual adjacency matrix become a pair of matrices $A$ and $A^*$ that satisfy the $q$-Serre relations.
The corresponding subconstituent algebra $\mathbb T$ is generated by $A, A^*$. It is known that $A, A^*$ act on each irreducible $\mathbb T$-module as a TD
pair of $q$-Serre type \cite[Lemma~9.4]{drg}.  We show that each irreducible $\mathbb T$-module has  bases of type (i)--(iv) above. Our main
results on this topic are Theorems \ref{thm:orthog},  \ref{thm:orthog2}. We finish the paper with some open problems.
\medskip

\noindent 
This paper is organized as follows.
Section 2 contains some preliminaries.
In Section 3 we define  a TD pair and TD system, and provide some basic facts about these objects.
In Section 4 we discuss a TD system from the point of view of flags and decompositions.
In Section 5 we define the tetrahedron diagram and use it to describe a TD system.
In Section 6 we give some slightly technical comments about flags and decompositions.
In Section 7 we describe the tridiagonal relations.
In Section 8 we review the algebra $U^+_q$ and its alternating elements.
 Section 9 contains our main results about TD systems of $q$-Serre type.
Section 10 contains our main results about Leonard systems of $q$-Serre type.
Section 11 contains our main results about distance-regular graphs.
In Section 12 we give some  open problems.

\section{Preliminaries}
We now begin our formal argument.
The following concepts and notation will be used throughout the paper. Recall the natural numbers $\mathbb N= \lbrace 0,1,2,\ldots\rbrace$.
We will be discussing finite sequences. For $d \in \mathbb N$ a sequence $x_0, x_1, \ldots, x_d$ will be denoted by $\lbrace x_i \rbrace_{i=0}^d$. By the {\it inversion} of $\lbrace x_i\rbrace_{i=0}^d$ we mean
the sequence $\lbrace x_{d-i} \rbrace_{i=0}^d$.
Let $\mathbb F$ denote a field. 
Every vector space discussed in this paper is understood to be over $\mathbb F$. Every algebra discussed in this paper is 
understood to be associative, over $\mathbb F$, and have a multiplicative identity.
A subalgebra has the same multiplicative identity as the parent algebra.
Throughout the paper, $V$ denotes a nonzero vector space with finite dimension. The algebra ${\rm End}(V)$ 
consists of the $\mathbb F$-linear maps from $V$ to $V$.
An element $A \in {\rm End}(V)$ is said to be {\it diagonalizable} whenever
$V$ is spanned by the eigenspaces of $A$. Assume that $A$ is diagonalizable, and let $\lbrace V_i \rbrace_{i=0}^d$ 
denote an ordering of the eigenspaces of $A$. For $0 \leq i \leq d$ let $\theta_i$ denote the eigenvalue of $A$ for $V_i$.
For $0 \leq i \leq d$ define $E_i \in {\rm End}(V)$ such that
$(E_i-I) V_i=0$ and $E_iV_j=0$ if $j \not=i$ $(0 \leq j \leq d)$. Thus $E_i$ is the projection from $V$ onto $V_i$. We call $E_i$ the {\it primitive idempotent} of $A$ associated with $V_i$ (or $\theta_i$).
By linear algebra 
(i) $A = \sum_{i=0}^d \theta_i E_i$; 
(ii) $E_i E_j = \delta_{i,j} E_i$ $ (0 \leq i,j\leq d)$;
(iii) $I = \sum_{i=0}^d E_i$; 
(iv) $V_i = E_iV$ $ (0 \leq i \leq d)$; 
(v) $AE_i = \theta_i E_i = E_iA$ $(0 \leq i \leq d)$. Moreover
\begin{align*}
  E_i=\prod_{\stackrel{0 \leq j \leq d}{j \neq i}}
          \frac{A-\theta_jI}{\theta_i-\theta_j} \qquad \qquad (0 \leq i \leq d).
\end{align*}
Let $\langle A \rangle $ denote the subalgebra of ${\rm End}(V)$ generated by $A$. 
The elements $\lbrace A^i \rbrace_{i=0}^d $ form a basis for $\langle A \rangle $, and
$0=\prod_{i=0}^d (A-\theta_i I)$. Moreover the elements $\lbrace E_i \rbrace_{i=0}^d$ form a basis for $\langle A \rangle$.

\section{Tridiagonal pairs and tridiagonal systems}
\noindent 
In this section we recall the notions of a tridiagonal pair and a tridiagonal system. We give some basic facts about these objects.

\begin{definition}  {\rm (See
\cite[Definition 1.1]{TD00}.)}
\label{def:tdp}
\rm
A  {\it tridiagonal pair} (or {\it TD pair}) on $V$
is an ordered pair $A, A^*$ of elements in ${\rm End}(V)$ that satisfy
the following four conditions.
\begin{enumerate}
\item[\rm (i)]  Each of $A,A^*$ is diagonalizable.
\item[\rm (ii)] There exists an ordering $\lbrace V_i \rbrace_{i=0}^d $ of the  
eigenspaces of $A$ such that 
\begin{equation}
A^* V_i \subseteq V_{i-1} + V_i+ V_{i+1} \qquad \qquad (0 \leq i \leq d),
\label{eq:t1}
\end{equation}
where $V_{-1} = 0$ and $V_{d+1}= 0$.
\item[\rm (iii)]There exists an ordering $\lbrace V^*_i \rbrace_{i=0}^\delta$ of
the  
eigenspaces of $A^*$ such that 
\begin{equation}
A V^*_i \subseteq V^*_{i-1} + V^*_i+ V^*_{i+1} \qquad \qquad (0 \leq i \leq \delta),
\label{eq:t2}
\end{equation}
where $V^*_{-1} = 0$ and $V^*_{\delta+1}= 0$.
\item[\rm (iv)]  There does not exist a subspace $W$ of $V$ such  that $AW\subseteq W$,
$A^*W\subseteq W$, $W\not=0$, $W\not=V$.
\end{enumerate}
\end{definition}

\begin{note} \rm
According to a common 
notational convention, $A^*$ denotes the conjugate transpose
of $A$.
 We are not using this convention.
In a TD pair $A,A^*$ the maps
$A$ and $A^*$ are arbitrary subject to (i)--(iv) above.
\end{note}



\noindent We refer the reader to \cite{class}, \cite{TD00}, \cite{augIto}, \cite{totBip} for background and historical remarks about TD pairs.

\begin{lemma} \label{lem:affine}
For a TD pair $A, A^*$  on $V$ and  scalars $r,r^*,s,s^*\in \mathbb F$ with $rr^*\not=0$, the pair $rA+sI$, $r^*A^*+s^*I$ 
is a TD pair on $V$.
\end{lemma}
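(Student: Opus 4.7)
The plan is to verify the four defining conditions (i)--(iv) of Definition \ref{def:tdp} for the pair $B = rA+sI$, $B^* = r^*A^*+s^*I$, reusing the eigenspace data of $A$ and $A^*$.

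First I would address condition (i). Since $A$ is diagonalizable with eigenspaces $\{V_i\}_{i=0}^d$ and eigenvalues $\{\theta_i\}_{i=0}^d$, the map $B$ acts on $V_i$ as multiplication by $r\theta_i + s$. Because $r\ne 0$ the scalars $\{r\theta_i + s\}_{i=0}^d$ remain mutually distinct, so $V = \sum_{i=0}^d V_i$ is still the eigenspace decomposition of $B$, proving $B$ is diagonalizable. The same argument using $r^* \ne 0$ handles $B^*$.

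Next, for conditions (ii) and (iii), I would keep the same orderings $\{V_i\}_{i=0}^d$ and $\{V^*_i\}_{i=0}^\delta$ that work for $A,A^*$. For any $i$,
\begin{align*}
B^* V_i = r^* A^* V_i + s^* V_i \subseteq V_{i-1} + V_i + V_{i+1},
\end{align*}
since both $r^* A^* V_i \subseteq V_{i-1}+V_i+V_{i+1}$ by \eqref{eq:t1} and $s^* V_i \subseteq V_i$. The analogous inclusion for $B$ acting on $V^*_i$ follows from \eqref{eq:t2} in exactly the same way.

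Finally, for condition (iv), I would observe that a subspace $W \subseteq V$ satisfies $BW \subseteq W$ if and only if $AW \subseteq W$; indeed $AW = r^{-1}(BW - sW) \subseteq W$ when $BW\subseteq W$, and conversely $BW = rAW + sW \subseteq W$ when $AW\subseteq W$. The same equivalence, with $r$ replaced by $r^*$, relates $B^*$-invariance to $A^*$-invariance. Hence the irreducibility condition for $A,A^*$ transfers verbatim to $B,B^*$. There is no real obstacle here, as each condition reduces to an elementary observation about affine shifts; the main point is simply to note that the nonzero scaling factors $r,r^*$ preserve both eigenspace structure and invariant subspaces.
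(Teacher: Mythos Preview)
Your proof is correct and is exactly the routine verification the paper has in mind; the paper's own proof consists of the single word ``Routine.'' You have simply spelled out the direct check of conditions (i)--(iv) of Definition~\ref{def:tdp}, which is what that word abbreviates.
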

\begin{proof} Routine.
\end{proof}

\noindent We have been discussing TD pairs. There is a related notion, called a TD system. To define a TD system, we will use the following concept. Let $A, A^*$ denote a TD pair on $V$.
An ordering of the eigenspaces of $A$ (resp. $A^*$)
is called
{\it standard} whenever it satisfies
(\ref{eq:t1}) (resp. 
(\ref{eq:t2})). 
We comment on the uniqueness of the standard ordering.
Let 
 $\lbrace V_i \rbrace_{i=0}^d$ denote a standard ordering
of the eigenspaces of $A$.
Then by \cite[Lemma~2.4]{TD00}, the inverted ordering 
 $\lbrace V_{d-i} \rbrace_{i=0}^d$ 
is also standard and no further ordering is standard.
A similar result holds for the 
 eigenspaces of $A^*$. An ordering of the primitive idempotents of $A$ (resp. $A^*$) is said to be {\it standard} whenever
 the corresponding ordering of the eigenspaces of $A$ (resp. $A^*$) is standard.
\begin{definition} \rm (See  \cite[Definition~2.1]{TD00}, \cite[Definition~2.1]{nomsharp}.)
 \label{def:TDsystem} 
A  {\it tridiagonal system} (or {\it  $TD$ system}) on $V$ is a sequence
\begin{align*}
 \Phi=(A;\lbrace E_i\rbrace_{i=0}^d;A^*;\lbrace E^*_i\rbrace_{i=0}^\delta)
\end{align*}
that satisfies the following three conditions:
\begin{enumerate}
\item[\rm (i)]
$A,A^*$ is a TD pair on $V$;
\item[\rm (ii)]
$\lbrace E_i\rbrace_{i=0}^d$ is a standard ordering
of the primitive idempotents of $A$;
\item[\rm (iii)]
$\lbrace E^*_i\rbrace _{i=0}^\delta$ is a standard ordering
of the primitive idempotents of $A^*$.
\end{enumerate}
\end{definition}
\noindent We have a comment.
\begin{lemma} {\rm (See \cite[Section~3]{TD00}.)} Consider a TD system on $V$:
\begin{align*}
\Phi = (
A; \lbrace E_i \rbrace_{i=0}^d;
A^*; \lbrace E^*_i \rbrace_{i=0}^\delta).
\end{align*}
For scalars $r,r^*,s,s^*\in \mathbb F$ such that $rr^*\not=0$, 
 the sequence
\begin{align*}
 (
r A+sI; \lbrace E_i \rbrace_{i=0}^d;
r^*A^*+s^* I; \lbrace E^*_i \rbrace_{i=0}^\delta)
\end{align*}
is a TD system on $V$.
Moreover, each of the following is a TD system on $V$:
\begin{align*}
\Phi^* &= (
A^*; \lbrace E^*_i \rbrace_{i=0}^{\delta};
A; \lbrace E_i \rbrace_{i=0}^d),
\\ 
\Phi^\downarrow &= 
(A; \lbrace E_i \rbrace_{i=0}^d;
A^*; \lbrace E^*_{\delta-i} \rbrace_{i=0}^\delta),
\\
\Phi^\Downarrow &= (
A; \lbrace E_{d-i} \rbrace_{i=0}^d;
A^*; \lbrace E^*_i \rbrace_{i=0}^\delta).
\end{align*}
\end{lemma}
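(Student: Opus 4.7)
The proof is largely a bookkeeping exercise, carried out by unpacking Definition \ref{def:TDsystem} in each of the four cases.

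For the first assertion, I would begin by invoking Lemma \ref{lem:affine} to conclude that $rA+sI$, $r^*A^*+s^*I$ is a TD pair on $V$. The key observation is that an affine transformation $A\mapsto rA+sI$ with $r\neq 0$ preserves eigenspaces: if $V_i$ is the eigenspace of $A$ for eigenvalue $\theta_i$, then $V_i$ is the eigenspace of $rA+sI$ for eigenvalue $r\theta_i+s$. Consequently the primitive idempotents of $rA+sI$ coincide with those of $A$, and similarly for $A^*$. Since the standardness conditions (\ref{eq:t1}), (\ref{eq:t2}) involve only the eigenspaces (not the associated eigenvalues), the orderings $\lbrace E_i\rbrace_{i=0}^d$ and $\lbrace E^*_i\rbrace_{i=0}^\delta$ remain standard for the new pair. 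Hence the new sequence is a TD system.

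For $\Phi^*$, I would simply note that conditions (i)--(iv) of Definition \ref{def:tdp} are manifestly symmetric in $A$ and $A^*$: swapping the two exchanges conditions (ii) and (iii), and leaves (i), (iv) invariant. Thus $A^*, A$ is again a TD pair, and the orderings $\lbrace E^*_i\rbrace_{i=0}^\delta$, $\lbrace E_i\rbrace_{i=0}^d$ are standard for it by the same observation.

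For $\Phi^\downarrow$ and $\Phi^\Downarrow$, the argument rests on the comment in the excerpt (cited there from \cite[Lemma~2.4]{TD00}) that inversion of a standard ordering of eigenspaces is again standard. For $\Phi^\Downarrow$, the inverted ordering $\lbrace V_{d-i}\rbrace_{i=0}^d$ of the eigenspaces of $A$ is standard, so $\lbrace E_{d-i}\rbrace_{i=0}^d$ is a standard ordering of primitive idempotents of $A$; the $A^*$ side is unchanged; and the pair $A, A^*$ is already known to be a TD pair. The argument for $\Phi^\downarrow$ is identical with the roles of $A$ and $A^*$ swapped.

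There is no substantive obstacle here; the only point that requires a moment of care is the clean separation between eigenvalue data (which changes under affine scaling and under inversion in the obvious way) and eigenspace data (on which the standardness conditions actually depend). Once that separation is made explicit, all four claims follow immediately from Definition \ref{def:TDsystem}, Lemma \ref{lem:affine}, the symmetry of Definition \ref{def:tdp}, and the inversion fact recalled just before Definition \ref{def:TDsystem}.
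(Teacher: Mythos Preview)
Your proof is correct and complete; the paper itself gives no proof for this lemma, merely citing \cite[Section~3]{TD00}, so your argument supplies exactly the routine verification that the citation is standing in for. The separation you emphasize between eigenspace data and eigenvalue data is the right organizing principle, and each of the four cases follows as you describe.
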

\medskip

\noindent From now until the end of Section 10, we fix a TD system on $V$:
\begin{align} \label{eq:TDS}
 \Phi=(A;\lbrace E_i\rbrace_{i=0}^d;A^*;\lbrace E^*_i\rbrace_{i=0}^\delta).
\end{align}

\begin{lemma} {\rm (See \cite[Lemma~2.4]{TD00}, \cite[Lemma~2.5]{nomTowards}.)} 
\label{lem:tpr} Referring to the TD system $\Phi$, we have
\begin{align*}
&
E_i A^* E_j =
\begin{cases}
0 & {\mbox{\rm if $|i-j|>1$}};\\
\not=0 &  {\mbox{\rm if $|i-j|=1$
}}
\end{cases}
\qquad \qquad (0 \leq i,j \leq d),
\\
&
E^*_i A E^*_j =
\begin{cases}
0 & {\mbox{\rm if $|i-j|> 1$}};\\
\not=0 &  {\mbox{\rm if $|i-j|=1$
}}
\end{cases}
\qquad \qquad (0 \leq i,j \leq \delta).
\end{align*}
\end{lemma}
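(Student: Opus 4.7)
The plan is to establish each equation in two parts: first the vanishing part when $|i-j|>1$, which is a direct consequence of the standard ordering, and then the nonvanishing part when $|i-j|=1$, which I would derive from the irreducibility condition (iv) of Definition \ref{def:tdp} by a standard invariant-subspace argument. Throughout, I would set $V_i = E_i V$ and $V^*_i = E^*_i V$.

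For the first identity, the vanishing part is immediate: since $\{E_i\}_{i=0}^d$ is standard, we have $A^* V_j \subseteq V_{j-1}+V_j+V_{j+1}$ (with $V_{-1}=V_{d+1}=0$). Because $E_i V_k = V_k$ when $k=i$ and $0$ otherwise, applying $E_i$ to $A^* E_j V = A^* V_j$ gives $0$ whenever $i \notin \{j-1,j,j+1\}$, which handles $|i-j|>1$. For $|i-j|=1$, I would argue by contradiction. Suppose $E_i A^* E_j = 0$ with $j = i-1$ (the case $j=i+1$ is analogous). Consider $W = V_0 + V_1 + \cdots + V_{i-1}$. Since each $V_\ell$ is an eigenspace of $A$, clearly $AW \subseteq W$. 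For $0 \le \ell \le i-2$, the containment $A^*V_\ell \subseteq V_{\ell-1}+V_\ell+V_{\ell+1} \subseteq W$ is automatic; for $\ell = i-1$, the assumption $E_i A^* E_{i-1} = 0$ kills the potentially problematic $V_i$-component, yielding $A^* V_{i-1} \subseteq V_{i-2}+V_{i-1} \subseteq W$. Thus $A^*W \subseteq W$. Since $V_0 \neq 0$ we have $W \neq 0$, and since $V_i \neq 0$ lies outside $W$ we have $W \neq V$, contradicting Definition \ref{def:tdp}(iv).

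The case $j=i+1$ is handled symmetrically by instead taking $W = V_{i+1}+V_{i+2}+\cdots+V_d$; the vanishing assumption again kills the only component of $A^* V_{i+1}$ that could escape $W$. The second identity $E^*_i A E^*_j$ follows by applying the same argument to the TD system $\Phi^*$ from the preceding lemma, or equivalently by swapping the roles of $(A,\{E_i\})$ and $(A^*,\{E^*_i\})$ throughout.

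I do not expect any serious obstacle here; the only delicate point is ensuring that the subspace $W$ built from a putative vanishing hypothesis is genuinely proper and nonzero, which uses that each $V_i$ is a nonzero eigenspace together with the fact that $0 \le i \le d$ and $|i-j|=1$ force both endpoints of the chain to be present. Once irreducibility is invoked to rule this out, the proof closes immediately.
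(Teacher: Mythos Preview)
Your proof is correct and is the standard argument for this result. Note that the paper does not give its own proof of this lemma; it merely states the result with citations to \cite[Lemma~2.4]{TD00} and \cite[Lemma~2.5]{nomTowards}, where precisely the invariant-subspace argument you give is carried out.
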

\bigskip


\begin{definition}\rm {(See \cite[Definition~3.1]{TD00}.)}
Referring to  the TD system $\Phi$, for $0 \leq i\leq d$ let $\theta_i$ 
denote the eigenvalue of $A$ associated
with $E_i$. 
 We call $\lbrace \theta_i \rbrace_{i=0}^d$
the {\it eigenvalue sequence} of $\Phi$.
For $0 \leq i\leq \delta$ let $\theta^*_i$ 
denote the eigenvalue of $A^*$ associated
with $E^*_i$. 
 We call $\lbrace \theta^*_i \rbrace_{i=0}^\delta$
the {\it dual eigenvalue sequence} of $\Phi$. 
\end{definition}
\noindent We emphasize that
 $\lbrace \theta_i \rbrace_{i=0}^d$ are mutually distinct, and  $\lbrace \theta^*_i \rbrace_{i=0}^\delta$ are mutually distinct.
 \medskip
 
\noindent Referring to the TD system $\Phi$, it is shown in \cite[Lemma~4.5]{TD00} that $d=\delta$. This result is nontrivial, and can be obtained using some ideas that will play a role
in our main results. We will give a short proof in order to introduce these ideas.
\medskip

\noindent 
For $0\leq i\leq \delta$ and $0\leq j \leq d$, define
\begin{align}
V_{i,j} = (E^*_0V+E^*_1V+ \cdots + E^*_iV) \cap (E_0V+E_1V+\cdots + E_{j}V).
\label{eq:Vij}
\end{align}

\noindent For example, $V_{0,d} = 
 E^*_0V$ and
$V_{\delta,0} =  E_0V$.
\noindent Define
\begin{align}
N = {\rm min} \lbrace i+j \vert  V_{i,j}\not=0\rbrace.
\label{eq:N}
\end{align}
\noindent 
We have  $N\leq d$ because
$V_{0,d}\not=0$. We have
$N\leq \delta$ because $V_{\delta, 0}\not=0$.
  For $0 \leq i \leq N$ abbreviate $U_i = V_{i,N-i}$, so that
 \begin{align}\label{eq:abbrev}
      U_i = (E^*_0V+E^*_1V+ \cdots + E^*_iV) \cap (E_0V+E_1V+\cdots + E_{N-i}V).
      \end{align}

\noindent In a moment we will show that $N=d=\delta$.
\begin{lemma} \label{lem:RL}  {\rm (See \cite[Lemma~4.4]{TD00}.)} We have
\begin{enumerate}
\item[\rm (i)] 
$(A^*-\theta^*_{i} I)U_i \subseteq U_{i-1} $ for $1 \leq i \leq N$;
\item[\rm (ii)]
$(A^*-\theta^*_0 I)U_0 =0$;
\item[\rm (iii)] $(A-\theta_{N-i}I)U_i \subseteq U_{i+1}$ for $0 \leq i \leq N-1$;
\item[\rm(iv)]
$(A-\theta_0 I)U_N =0$.
\end{enumerate}
\end{lemma}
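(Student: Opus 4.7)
The plan is to exploit the fact that each $U_i$ is defined as the intersection of two subspaces, namely
\[
U'_i := E^*_0V+E^*_1V+\cdots+E^*_iV, \qquad U''_i := E_0V+E_1V+\cdots+E_{N-i}V,
\]
and to show that $(A^*-\theta^*_i I)$ respects the $U'$-filtration in one direction and the $U''$-filtration in the other. I would first prove (i), which is the main claim; (ii)--(iv) then follow either trivially or by a symmetric argument.

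For (i), fix $1\le i\le N$. I would verify two containments: (a) $(A^*-\theta^*_i I)U'_i\subseteq U'_{i-1}$, and (b) $(A^*-\theta^*_i I)U''_i\subseteq U''_{i-1}$ (note that $U''_{i-1}=E_0V+\cdots+E_{N-i+1}V$, since the second index of $U_{i-1}$ is $N-(i-1)=N-i+1$). Claim (a) is immediate from the spectral decomposition $A^*=\sum_{j=0}^{\delta}\theta^*_j E^*_j$: the operator $A^*-\theta^*_i I$ preserves each $E^*_jV$, acting on it as the scalar $\theta^*_j-\theta^*_i$, and in particular annihilates $E^*_iV$, so the image of $U'_i$ lies in $\sum_{j<i}E^*_jV = U'_{i-1}$. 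Claim (b) uses Definition \ref{def:tdp}(ii): the standard ordering of the eigenspaces of $A$ makes $A^*E_jV\subseteq E_{j-1}V+E_jV+E_{j+1}V$, so summing over $0\le j\le N-i$ gives $A^*U''_i\subseteq E_0V+\cdots+E_{N-i+1}V = U''_{i-1}$; since $\theta^*_i I$ preserves $U''_i\subseteq U''_{i-1}$, the displayed inclusion follows. Intersecting (a) and (b) yields $(A^*-\theta^*_i I)U_i\subseteq U'_{i-1}\cap U''_{i-1}=U_{i-1}$.

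Part (ii) is immediate: $U_0\subseteq E^*_0V$, and on $E^*_0V$ we have $A^*=\theta^*_0 I$. Part (iv) is likewise immediate from $U_N\subseteq E_0V$, on which $A=\theta_0 I$. Part (iii) is the mirror image of (i), obtained by interchanging the roles of $(A,\{E_i\})$ and $(A^*,\{E^*_i\})$: the operator $A-\theta_{N-i}I$ acts on $U''_i$ by annihilating the $E_{N-i}V$ summand and scaling each $E_jV$ with $j<N-i$, hence $(A-\theta_{N-i}I)U''_i\subseteq U''_{i+1}$; meanwhile Definition \ref{def:tdp}(iii) gives $AE^*_jV\subseteq E^*_{j-1}V+E^*_jV+E^*_{j+1}V$, so $(A-\theta_{N-i}I)U'_i\subseteq U'_{i+1}$. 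Intersecting yields the result.

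There is no real obstacle here; the argument is essentially bookkeeping. The only point requiring care is the index arithmetic in Claim (b)---remembering that lowering $i$ by one in the subscript of $U_i$ corresponds to raising the $A$-side filtration index by one, so that the tridiagonal spread of $A^*$ across $A$-eigenspaces is exactly what is needed for the inclusion to land back inside $U_{i-1}$.
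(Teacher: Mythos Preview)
Your proof is correct and follows essentially the same route as the paper: verify the two filtration containments for $A^*-\theta^*_iI$ (one from the spectral decomposition of $A^*$, one from the tridiagonal action of $A^*$ on the $A$-eigenspaces) and intersect, then handle the boundary cases and the dual statements by symmetry. The only cosmetic difference is that the paper dispatches (iii) and (iv) by invoking the TD system $\Phi^*$, whereas you spell out the mirrored argument explicitly.
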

\begin{proof} (i) Use \eqref{eq:abbrev} along with the following observations.
\begin{align*}
 (A^*- \theta^*_i I)(E^*_0V+ \cdots + E^*_i V) = E^*_0V+ \cdots + E^*_{i-1}V.
 \end{align*}
 By \eqref{eq:t1},
 \begin{align*}
 (A^*-\theta^*_i I) (E_0V+\cdots + E_{N-i}V) \subseteq E_0V+\cdots + E_{N-i+1}V.
 \end{align*}
 \\
 \noindent (ii) Setting $i=0$ in \eqref{eq:abbrev} we  have $U_0 \subseteq E^*_0V$.
 \\
 \noindent (iii), (iv) Apply (i), (ii) to $\Phi^*$.
\end{proof}

\begin{corollary} \label{lem:RL2} We have
\begin{enumerate}
\item[\rm (i)] 
$A^*U_i \subseteq U_{i-1}+ U_i $ for $1 \leq i \leq N$;
\item[\rm (ii)]
$A^*U_0 \subseteq U_0$;
\item[\rm (iii)] $A U_i \subseteq U_i + U_{i+1}$ for $0 \leq i \leq N-1$;
\item[\rm(iv)]
$ A U_N \subseteq U_N$.
\end{enumerate}
\end{corollary}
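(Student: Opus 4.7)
The plan is to deduce each of the four containments directly from the corresponding statement of Lemma \ref{lem:RL} by moving the scalar multiple of the identity to the other side of the containment. Concretely, for part (i), I would start from $(A^* - \theta^*_i I)U_i \subseteq U_{i-1}$ valid for $1 \leq i \leq N$; since for any $u \in U_i$ we have $A^* u = (A^* - \theta^*_i I) u + \theta^*_i u$, the first summand lies in $U_{i-1}$ and the second in $U_i$, giving $A^* U_i \subseteq U_{i-1} + U_i$.

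For part (ii), Lemma \ref{lem:RL}(ii) says $(A^* - \theta^*_0 I) U_0 = 0$, i.e.\ $A^*$ acts on $U_0$ as the scalar $\theta^*_0$, so certainly $A^* U_0 \subseteq U_0$. Parts (iii) and (iv) are proved in exactly the same way using Lemma \ref{lem:RL}(iii),(iv): from $(A - \theta_{N-i}I) U_i \subseteq U_{i+1}$ we obtain $A U_i \subseteq U_i + U_{i+1}$ for $0 \leq i \leq N-1$, and from $(A - \theta_0 I) U_N = 0$ we obtain $A U_N \subseteq U_N$.

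There is no genuine obstacle here; the corollary is an immediate restatement of the lemma that simply absorbs the eigenvalue terms back into the spaces $U_i$. I would simply write the four one-line verifications and refer back to Lemma \ref{lem:RL}.
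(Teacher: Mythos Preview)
Your proposal is correct and matches the paper's approach exactly: the paper's proof consists solely of the words ``By Lemma \ref{lem:RL},'' and your argument spells out precisely the trivial deduction that this citation encodes.
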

\begin{proof} By Lemma \ref{lem:RL}.
\end{proof}

\begin{lemma} \label{lem:Vsum} 
 $V=\sum_{i=0}^N U_i$.
\end{lemma}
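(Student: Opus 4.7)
The plan is to define $W = \sum_{i=0}^N U_i$ and use the irreducibility axiom (iv) in Definition \ref{def:tdp} to conclude $W = V$. I would carry this out in three short steps.

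First, I would observe that $W$ is invariant under both $A$ and $A^*$. This is immediate from Corollary \ref{lem:RL2}: parts (iii) and (iv) give $AU_i \subseteq U_i + U_{i+1}$ for $0 \leq i \leq N-1$ and $AU_N \subseteq U_N$, so summing over $i$ yields $AW \subseteq W$; similarly parts (i) and (ii) yield $A^*W \subseteq W$.

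Second, I would argue that $W \neq 0$. By the definition \eqref{eq:N} of $N$, there exist $i, j$ with $i+j = N$ and $V_{i,j} \neq 0$. I would need to check that this forces some $U_k = V_{k, N-k}$ with $0 \leq k \leq N$ to be nonzero; taking $k = i$ (so $N-k = j$) does the job, since $0 \leq i \leq N$ follows from $i+j=N$ and $j \geq 0$. Hence $U_i \neq 0$, and therefore $W \neq 0$.

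Third, since $W$ is a nonzero $(A, A^*)$-invariant subspace of $V$, condition (iv) of Definition \ref{def:tdp} forces $W = V$, which is the desired conclusion.

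There is no real obstacle here: everything hinges on Corollary \ref{lem:RL2}, which is already in place, and on the irreducibility axiom. The only subtlety worth noting is that the minimality in the definition of $N$ is not used in this lemma beyond guaranteeing that at least one $U_i$ is nonzero — the full force of minimality (which ensures the sum is in fact direct and that $N = d = \delta$) will come into play in the subsequent arguments that identify the $U_i$ as the components of the first split decomposition.
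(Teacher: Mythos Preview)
Your proposal is correct and follows exactly the same approach as the paper's own proof: set $U=\sum_{i=0}^N U_i$, use Corollary~\ref{lem:RL2} to get $AU\subseteq U$ and $A^*U\subseteq U$, observe $U\neq 0$ from the definition~\eqref{eq:N} of $N$, and invoke Definition~\ref{def:tdp}(iv). You have simply spelled out in slightly more detail why some $U_i$ is nonzero.
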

\begin{proof} Define $U=\sum_{i=0}^N U_i$. We have $U \not=0$, since at least one of $\lbrace U_i \rbrace_{i=0}^N$ is nonzero by  \eqref{eq:N}.
We have $A U \subseteq U$ and $A^*U \subseteq U$ by Corollary \ref{lem:RL2}. Therefore $U=V$ in view of Definition \ref{def:tdp}(iv).
\end{proof}

 \begin{lemma} {\rm (See \cite[Lemma~4.5]{TD00}.)} $N=d=\delta$.
 \end{lemma}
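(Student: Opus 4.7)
The proposal is to combine Lemma \ref{lem:Vsum} with the filtration information in Lemma \ref{lem:RL} in order to bound the number of distinct eigenvalues of $A$ (and of $A^*$) on $V$ by $N+1$.

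First I would set up a decreasing filtration adapted to $A$. Define
\begin{align*}
F_k = U_k + U_{k+1} + \cdots + U_N \qquad (0 \leq k \leq N), \qquad F_{N+1} = 0.
\end{align*}
By Lemma \ref{lem:Vsum} we have $F_0 = V$. Corollary \ref{lem:RL2}(iii),(iv) gives $A F_k \subseteq F_k$, so $A$ is ``upper triangular'' with respect to this filtration. Moreover, Lemma \ref{lem:RL}(iii),(iv) tells us that $(A - \theta_{N-k}I)U_k \subseteq U_{k+1} \subseteq F_{k+1}$, so $A$ induces multiplication by the scalar $\theta_{N-k}$ on the successive quotient $F_k/F_{k+1}$ (which is a quotient of $U_k$).

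Next I would extract the eigenvalue consequence. Iterating the previous observation, $\prod_{k=0}^{N}(A-\theta_{N-k}I)$ acts as zero on $V$. Since $A$ is diagonalizable, its minimal polynomial is $\prod_{i=0}^{d}(x-\theta_i)$, with the $\theta_i$ mutually distinct. Hence $\{\theta_0,\ldots,\theta_d\}\subseteq \{\theta_0,\ldots,\theta_N\}$, forcing $d \leq N$. Together with the bound $N\leq d$ already noted after \eqref{eq:N}, this yields $N=d$.

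For the dual statement I would run the symmetric argument using the increasing filtration
\begin{align*}
G_k = U_0 + U_1 + \cdots + U_k \qquad (0 \leq k \leq N), \qquad G_{-1}=0.
\end{align*}
Corollary \ref{lem:RL2}(i),(ii) shows $A^* G_k \subseteq G_k$, and Lemma \ref{lem:RL}(i),(ii) shows that $A^*-\theta^*_k I$ sends $G_k$ into $G_{k-1}$. Thus $\prod_{k=0}^{N}(A^*-\theta^*_k I)=0$ on $V$. Since $A^*$ is diagonalizable with distinct eigenvalues $\theta^*_0,\ldots,\theta^*_\delta$, we conclude $\delta \leq N$, and combined with $N \leq \delta$ this gives $\delta = N$. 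So $N = d = \delta$.

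The argument is essentially bookkeeping once the two filtrations are identified; the only substantive point that could trip one up is to verify that the two one-sided filtrations of $\sum_i U_i$ are compatible with the one-sided actions of $A$ and of $A^*$ that Lemma \ref{lem:RL} provides, so that a minimal-polynomial conclusion becomes available. Once that is in place, comparison with the known distinct eigenvalues of $A$ (and of $A^*$) closes the proof.
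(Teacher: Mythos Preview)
Your argument is correct, but the paper takes a shorter and more elementary route. Rather than building filtrations and invoking the minimal polynomial, the paper simply observes from \eqref{eq:abbrev} that each $U_i$ is contained in $E_0V+\cdots+E_{N-i}V\subseteq E_0V+\cdots+E_NV$; summing over $i$ and using Lemma~\ref{lem:Vsum} gives $V\subseteq \sum_{i=0}^N E_iV$, and since $V=\bigoplus_{i=0}^d E_iV$ this forces $d\leq N$. The dual containment $U_i\subseteq E^*_0V+\cdots+E^*_iV$ handles $\delta$ the same way.

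Your filtration approach is a legitimate alternative: it makes explicit that $A$ (resp.\ $A^*$) is block-triangular with scalar diagonal blocks relative to the $U_i$, and extracts the bound via the degree of an annihilating polynomial. This viewpoint is conceptually nice and foreshadows the split-decomposition picture in Lemma~\ref{thm:aaction}, but it is a bit more work than the paper's one-line subspace inclusion.
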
 
 \begin{proof}   We mentioned below \eqref{eq:N} that $N\leq d$ and $N \leq \delta$.
 By \eqref{eq:abbrev} we have $U_i \subseteq E_0V+\cdots + E_{N-i}V$ for $0 \leq i \leq N$. Consequently $\sum_{i=0}^N U_i \subseteq \sum_{i=0}^N E_iV$.
 Observe that
 \begin{align*}
\sum_{i=0}^d E_iV = V =\sum_{i=0}^N U_i \subseteq \sum_{i=0}^N E_iV.
 \end{align*}
 Therefore $N=d$. 
 By \eqref{eq:abbrev} we have $U_i \subseteq E^*_0V+\cdots + E^*_iV$ for $0 \leq i \leq N$. Consequently $\sum_{i=0}^N U_i \subseteq \sum_{i=0}^N E^*_iV$. Observe that
 \begin{align*}
  \sum_{i=0}^\delta E^*_iV = V = \sum_{i=0}^N U_i \subseteq \sum_{i=0}^N E^*_iV.
  \end{align*}
 Therefore $N=\delta$.
 \end{proof}
 
\begin{definition}\label{def:diam} \rm We just showed $d=\delta$; this common value is called the {\it diameter of $\Phi$}.
We say that $\Phi$ is {\it trivial} whenever it has diameter 0.
\end{definition}

 \noindent Note that $\Phi$ is trivial if and only if ${\rm dim}\,V=1$. For the rest of this paper, we assume that $\Phi$ is nontrivial.
 \medskip
 
\noindent In the next few results, we describe the subspaces $\lbrace U_i \rbrace_{i=0}^d$ in more  detail.

\begin{lemma} \label{lem:dVsum} {\rm (See \cite[Theorem~4.6]{TD00}.)} 
The sum $V=\sum_{i=0}^d U_i$ is direct.
\end{lemma}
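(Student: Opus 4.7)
The plan is to peel off terms from the sum one at a time, using the minimality of $N$ in \eqref{eq:N}---together with the fact $N=d$ just established---to force each partial sum to vanish. First I would set $W^*_i = E^*_0V+\cdots+E^*_iV$ and $W_j = E_0V+\cdots+E_jV$, so that by \eqref{eq:abbrev} and \eqref{eq:Vij} we have $U_k = W^*_k \cap W_{d-k} = V_{k,d-k}$.

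Suppose $\sum_{k=0}^d u_k = 0$ with $u_k \in U_k$. For each $0 \leq i \leq d-1$, split the sum as
\[
u_0 + u_1 + \cdots + u_i = -(u_{i+1} + \cdots + u_d).
\]
The left-hand side lies in $W^*_i$ because $U_k \subseteq W^*_k \subseteq W^*_i$ for $k \leq i$. The right-hand side lies in $W_{d-i-1}$ because $U_k \subseteq W_{d-k} \subseteq W_{d-i-1}$ for $k \geq i+1$. Hence the common value lies in $W^*_i \cap W_{d-i-1} = V_{i,d-i-1}$, which is zero by the definition of $N$ in \eqref{eq:N}: since $i+(d-i-1)=d-1 < d = N$, no such $V_{i,d-i-1}$ can be nonzero. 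Therefore $u_0 + \cdots + u_i = 0$ for every $0 \leq i \leq d-1$, and also $u_0 + \cdots + u_d = 0$ by hypothesis. Subtracting consecutive equations yields $u_i = 0$ for all $i$, so the sum $V = \sum_{i=0}^d U_i$ from Lemma \ref{lem:Vsum} is direct.

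The only step one might suspect of trouble is the transversality claim $V_{i,d-i-1}=0$, but this is immediate from the minimality of $N$ combined with $N=d$; no further input about dimensions or eigenvalues is required. So I do not foresee any real obstacle---the proof is essentially a one-line consequence of the two flag containments built into \eqref{eq:abbrev} together with the definition of $N$.
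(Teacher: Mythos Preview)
Your proof is correct and essentially identical to the paper's argument: both use the containments $U_0+\cdots+U_{i-1}\subseteq E^*_0V+\cdots+E^*_{i-1}V$ and $U_i\subseteq E_0V+\cdots+E_{d-i}V$ together with $V_{i-1,d-i}=0$ (since $i-1+d-i<N=d$) to conclude directness. The only cosmetic difference is that the paper phrases it as $(U_0+\cdots+U_{i-1})\cap U_i=0$, while you phrase it via a vanishing relation $\sum u_k=0$; the underlying idea and the key transversality step are the same.
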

\begin{proof} It suffices to show that $(U_0 + \cdots + U_{i-1} ) \cap U_i = 0$ for $1 \leq i \leq d$. Let $i$ be given.
By \eqref{eq:abbrev},
\begin{align*}
U_0 + \cdots + U_{i-1} \subseteq E^*_0V+\cdots + E^*_{i-1}V, \qquad \qquad  U_i \subseteq E_0V + \cdots + E_{d-i}V.
\end{align*}
 By \eqref{eq:Vij},
\begin{align*}
(E^*_0V+\cdots + E^*_{i-1}V ) \cap (E_0V+ \cdots + E_{d-i}V) = V_{i-1,d-i}.
\end{align*}
Note that $V_{i-1,d-i}=0$ by \eqref{eq:N} and since $i-1+d-i=d-1<d=N$.
By these comments $(U_0 + \cdots + U_{i-1} ) \cap U_i = 0$.
\end{proof}

\begin{lemma} \label{lem:split} {\rm (See \cite[Theorem~4.6]{TD00}.)} 
For $0 \leq i \leq d$,
\begin{enumerate}
\item[\rm (i)] $U_0 + \cdots + U_i = E^*_0V+\cdots + E^*_iV$;
\item[\rm (ii)] $U_{i} + \cdots + U_d = E_0V+\cdots + E_{d-i}V$.
\end{enumerate}
\end{lemma}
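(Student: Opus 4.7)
The plan is to deduce both parts by a single dimension-counting argument. The inclusions $\subseteq$ are immediate from \eqref{eq:abbrev}: each $U_j$ lies in $E^*_0V+\cdots+E^*_jV$, which is contained in $E^*_0V+\cdots+E^*_iV$ when $j \leq i$; and $U_j$ lies in $E_0V+\cdots+E_{d-j}V$, which is contained in $E_0V+\cdots+E_{d-i}V$ when $j \geq i$. The endpoints $i=d$ of (i) and $i=0$ of (ii) both reduce to $V=\sum_{j=0}^d U_j$, which is Lemma \ref{lem:Vsum}; so I may assume $0 \leq i \leq d-1$.

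Set $W = U_0 + \cdots + U_i$ and $W' = U_{i+1} + \cdots + U_d$. By Lemmas \ref{lem:Vsum} and \ref{lem:dVsum}, $V = W \oplus W'$, so $\dim W + \dim W' = \dim V$. On the other hand, from \eqref{eq:Vij} and the minimality \eqref{eq:N} of $N=d$,
\[
(E^*_0V + \cdots + E^*_iV) \cap (E_0V + \cdots + E_{d-i-1}V) = V_{i,\,d-i-1} = 0,
\]
since $i + (d-i-1) = d-1 < d = N$. Consequently
\[
\dim(E^*_0V + \cdots + E^*_iV) + \dim(E_0V + \cdots + E_{d-i-1}V) \leq \dim V.
\]

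Combining these two bounds with the easy inclusions $W \subseteq E^*_0V + \cdots + E^*_iV$ and $W' \subseteq E_0V + \cdots + E_{d-i-1}V$ yields the chain
\[
\dim V = \dim W + \dim W' \leq \dim(E^*_0V + \cdots + E^*_iV) + \dim(E_0V + \cdots + E_{d-i-1}V) \leq \dim V,
\]
forcing equality throughout. Together with those inclusions, this gives $W = E^*_0V+\cdots+E^*_iV$ and $W' = E_0V+\cdots+E_{d-i-1}V$, which is (i) at index $i$ and (ii) at index $i+1$. Letting $i$ run through $\{0,1,\ldots,d-1\}$ covers all remaining cases. I do not anticipate any real obstacle: the content is the observation that the directness of $V = \bigoplus_{j=0}^d U_j$ and the minimality of $N$ provide matching but opposite dimension bounds which pinch the easy inclusions into equalities.
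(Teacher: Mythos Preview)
Your proof is correct and takes a genuinely different route from the paper's. The paper proves the reverse inclusion in (i) constructively: it sets $X_i = \prod_{\ell=i+1}^d (A^*-\theta^*_\ell I)$, notes that $X_i V = E^*_0V+\cdots+E^*_iV$, and uses Lemma~\ref{lem:RL}(i),(ii) to show $X_i U_j \subseteq U_0+\cdots+U_i$ for every $j$; part (ii) then follows by applying (i) to $\Phi^*$. You instead pair the easy inclusion of (i) at index $i$ with that of (ii) at index $i+1$, and pinch both into equalities by combining the directness of $V=\bigoplus_j U_j$ (Lemma~\ref{lem:dVsum}) with the trivial-intersection fact $V_{i,d-i-1}=0$ coming from the minimality of $N$. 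The paper's argument is more constructive and independent of Lemma~\ref{lem:dVsum}; yours is a clean dimension count that handles (i) and (ii) in one stroke without invoking $\Phi^*$.
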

\begin{proof} (i) The inclusion $\subseteq $ follows from \eqref{eq:abbrev}. Next we obtain the inclusion $\supseteq$. Define $X_i = \prod_{\ell=i+1}^d (A^* - \theta^*_\ell I)$, and note that
 $X_i V = E^*_0V+\cdots + E^*_iV$. Using Lemma  \ref{lem:RL}(i),(ii) we have $U_0 + \cdots + U_i \supseteq X_iU_j$ for $0 \leq j \leq d$. Therefore $U_0 + \cdots + U_i \supseteq X_iV$.
By these comments $U_0 + \cdots + U_i \supseteq E^*_0V+\cdots + E^*_iV$. We have obtained the inclusion $\supseteq$.
\\
\noindent (ii) Apply (i) to $\Phi^*$.
\end{proof}

\noindent Next we define some maps that will help us describe how the $\lbrace U_i\rbrace_{i=0}^d$ are related to the
$\lbrace E_iV\rbrace_{i=0}^d$ and $\lbrace E^*_iV\rbrace_{i=0}^d$.
For $0 \leq i \leq d$ define $E^\vee_i \in {\rm End}(V)$ such that
$(E^\vee_i - I ) U_i =0$ and $E^\vee_i U_j = 0$ if $i \not=j$ $(0 \leq j \leq d)$.
By linear algebra, 
$E^\vee_i E^\vee_j = \delta_{i,j} E^\vee_i$ for $0 \leq i,j\leq d$, and $I = \sum_{i=0}^d E^\vee_i$. By Lemma \ref{lem:split} or \cite[Lemma~5.4]{TD00}
we find that for $0 \leq i < j \leq d$,
\begin{align*}
 E^*_j E^\vee_i=0,
\qquad E^\vee_j E^*_i=0, \qquad 
E_{d-i} E^\vee_j=0, \qquad E^\vee_{i} E_{d-j}=0.
\end{align*}
Consequently  for $0 \leq i  \leq d$,
\begin{align}
& E^\vee_i E^*_i E^\vee_i=E^\vee_i,
\qquad  \quad E^*_iE^\vee_i E^*_i=E^*_i, \label{eq:efe}
\\
&E^\vee_iE_{d-i} E^\vee_i=E^\vee_i, \qquad E_i E^\vee_{d-i} E_i=E_i. \label{eq:fef}
\end{align}

\begin{lemma}\label{lem:del} The maps
\begin{align*}
\Psi = \sum_{i=0}^d E^\vee_{d-i}E_i, \qquad \qquad \Psi^*= \sum_{i=0}^d E^\vee_i E^*_i
\end{align*}
are invertible, and 
\begin{align*}
\Psi^{-1} = \sum_{i=0}^d E_{d-i} E^\vee_i, \qquad \qquad (\Psi^*)^{-1} = \sum_{i=0}^d E^*_iE^\vee_i.
\end{align*}
Moreover,
\begin{align*}
\Psi (E_iV)=U_{d-i}, \qquad \qquad \Psi^* (E^*_iV) =U_i \qquad \qquad (0 \leq i \leq d).
\end{align*}
\end{lemma}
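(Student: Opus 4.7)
The plan is to handle $\Psi^*$ first (the notation is slightly cleaner), then adapt the same argument for $\Psi$, and finally verify the ``Moreover'' claims by a short direct-sum argument.

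For the inverse formulas, I would compute directly. Multiplying out,
\[
\Psi^* \sum_{j=0}^d E^*_j E^\vee_j \;=\; \sum_{i,j=0}^d E^\vee_i E^*_i E^*_j E^\vee_j.
\]
The orthogonality $E^*_i E^*_j = \delta_{i,j} E^*_i$ collapses the double sum to $\sum_i E^\vee_i E^*_i E^\vee_i$, which equals $\sum_i E^\vee_i = I$ by the first identity in \eqref{eq:efe}. The opposite product collapses in the same way, using $E^\vee_j E^\vee_i = \delta_{i,j} E^\vee_i$ and the companion identity $E^*_i E^\vee_i E^*_i = E^*_i$ of \eqref{eq:efe}, yielding $\sum_i E^*_i = I$. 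The proof for $\Psi$ runs parallel, but with the index twist $j \leftrightarrow d-j$ built into the definition: the cross terms collapse via $E_j E_{d-k} = \delta_{j,d-k} E_j$ together with the identities $E^\vee_i E_{d-i} E^\vee_i = E^\vee_i$ and $E_i E^\vee_{d-i} E_i = E_i$ from \eqref{eq:fef}.

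For the image statements, the key observation is that on $E_i V$ every summand of $\Psi$ except the $j=i$ term annihilates the vector, so
\[
\Psi(E_i V) = E^\vee_{d-i} E_i(E_i V) \subseteq E^\vee_{d-i} V = U_{d-i},
\]
and analogously $\Psi^*(E^*_i V) \subseteq U_i$. To upgrade each inclusion to an equality, I would invoke invertibility of $\Psi$ together with the two direct sum decompositions $V = \bigoplus_i E_i V$ and $V = \bigoplus_i U_i$ (the latter by Lemma \ref{lem:dVsum}): summing the inclusions gives $V = \sum_i \Psi(E_i V) \subseteq \sum_i U_{d-i} = V$, and since the sum on the right is direct, each inclusion $\Psi(E_i V) \subseteq U_{d-i}$ must be an equality.

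The only real source of difficulty is the index bookkeeping in the $\Psi$ case, where the twist $j \leftrightarrow d-j$ combined with the two forms of \eqref{eq:fef} is error-prone; I would verify that cross term by cross term, the relevant $E^\vee$ or $E$ is correctly recovered. Once the $\Psi^*$ case is in hand as a template, this is routine.
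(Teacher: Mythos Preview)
Your proof is correct and essentially matches the paper's: both verify the inverse formulas by direct computation using the idempotent orthogonalities together with \eqref{eq:efe}, \eqref{eq:fef}, and both obtain the inclusion $\Psi(E_iV)\subseteq U_{d-i}$ by noting that only the $j=i$ summand survives. The only difference is in how the inclusion is upgraded to equality: the paper computes the reverse inclusion $\Psi^{-1}(U_{d-i})\subseteq E_iV$ directly, whereas you use a direct-sum counting argument---both are equally short and valid.
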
 
\begin{proof}  First we verify the assertions about $\Psi$.
Define $\Psi'= \sum_{i=0}^d E_{d-i}E^\vee_i$. Using \eqref{eq:efe}, \eqref{eq:fef} we routinely obtain $\Psi \Psi' = I$ and $\Psi' \Psi = I$.
Therefore, $\Psi$ is invertible and $\Psi^{-1} = \Psi'$. Pick an integer $i$ $(0 \leq i \leq d)$. We have
\begin{align*}
\Psi (E_iV) = \Biggl( \sum_{j=0}^d E^\vee_{d-j} E_j \Biggr) E_iV = E^\vee_{d-i} E_iV \subseteq E^\vee_{d-i} V = U_{d-i},
\end{align*}
and also
\begin{align*}
\Psi^{-1} (U_{d-i}) = \Biggl( \sum_{j=0}^d E_{d-j} E^\vee_{j} \Biggr) U_{d-i} =E_i E^\vee_{d-i} U_{d-i} \subseteq E_iV.
\end{align*}
 By these comments $\Psi(E_iV)=U_{d-i}$. We have verified the assertions about $\Psi$.
The assertions about $\Psi^*$ are similarly verified.
\end{proof}


\begin{lemma}\label{lem:uee}   {\rm (See \cite[Corollary~5.7]{TD00}.)} For $0 \leq i \leq d$ the subspaces $E_iV$, $E^*_iV$, $U_i$ have the same dimension.
Denoting this common dimension by $\rho_i$, we have $\rho_i =\rho_{d-i}$.
\end{lemma}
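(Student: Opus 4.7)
The plan is to combine Lemma \ref{lem:del}, applied once to $\Phi$ and once to the closely related TD system $\Phi^\Downarrow$, to match all three dimensions and then deduce the palindromic symmetry.

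Applied to $\Phi$, Lemma \ref{lem:del} gives invertible maps $\Psi, \Psi^*$ with $\Psi(E_iV) = U_{d-i}$ and $\Psi^*(E^*_iV) = U_i$, so
\[
\dim E_iV = \dim U_{d-i}, \qquad \dim E^*_iV = \dim U_i \qquad (0 \leq i \leq d).
\]
Taken alone, these identities only yield the ``shifted'' matching $\dim E_{d-i}V = \dim E^*_iV$, not $\dim E_iV = \dim E^*_iV$. To upgrade, I apply Lemma \ref{lem:del} to $\Phi^\Downarrow$, in which the $A$-idempotents are ordered $E_d, E_{d-1}, \ldots, E_0$ while the $E^*_i$ are unchanged. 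Writing $\{U^\Downarrow_i\}_{i=0}^d$ for the subspaces built from $\Phi^\Downarrow$ by the recipe of \eqref{eq:abbrev}, Lemma \ref{lem:del} produces invertible maps carrying $E_{d-i}V \to U^\Downarrow_{d-i}$ and $E^*_iV \to U^\Downarrow_i$. Reindexing the first via $j = d-i$ gives $\dim E_jV = \dim U^\Downarrow_j$, which together with $\dim E^*_jV = \dim U^\Downarrow_j$ yields $\dim E_jV = \dim E^*_jV$.

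Combining, $\dim E_iV = \dim E^*_iV = \dim U_i$, and this common value is the claimed $\rho_i$. For the palindromic equality I chain
\[
\rho_i = \dim E_iV = \dim U_{d-i} = \dim E^*_{d-i}V = \rho_{d-i},
\]
using the $\Phi$-identities above together with the just-established matching at index $d-i$. The only mild pitfall is combinatorial: one must track carefully which index is reversed when passing to $\Phi^\Downarrow$, since a careless identification of $U^\Downarrow_i$ with $U_i$ or a wrong substitution $j = d-i$ would reproduce the shifted matching from $\Phi$ alone and leave the symmetry unproved.
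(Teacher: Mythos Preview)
Your proof is correct and follows essentially the same strategy as the paper: use Lemma~\ref{lem:del} for $\Phi$ together with the same lemma for a relative of $\Phi$ obtained by inverting one idempotent ordering. The only cosmetic difference is that the paper uses $\Phi^\downarrow$ rather than $\Phi^\Downarrow$, and it avoids naming the auxiliary subspaces $U^\downarrow_i$ by first deducing $\dim E_iV=\dim E^*_{d-i}V$ from $\Phi$ alone and then transporting that single identity to $\Phi^\downarrow$; your version with $U^\Downarrow_i$ as an explicit bridge is equally valid.
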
 
\begin{proof} By Lemma \ref{lem:del}, the subspaces $E_iV$, $U_{d-i}$ have the same dimension and the subspaces $E^*_iV$, $U_i$ have the same
dimension. Replacing $i$ by $d-i$, we find that  $E_{d-i}V$, $U_i$ have the same dimension and $E^*_{d-i}V$, $U_{d-i}$ have the same dimension.
By these comments, the subspaces $E_iV$, $E^*_{d-i}V$ have the same dimension. 
Applying this result to $\Phi^\downarrow$, we find
that $E_iV$, $E^*_iV$ have the same dimension. 
The result follows.
\end{proof}

\noindent We mention a fact about the dimensions $\lbrace \rho_i\rbrace_{i=0}^d $ from Lemma \ref{lem:uee}. We will not use this fact, so we will not dwell on the proof.
\begin{lemma} {\rm (See \cite[Corollary~6.6]{TD00}.)} Referring to Lemma \ref{lem:uee}, we have $\rho_{i-1} \leq \rho_i$ for $1 \leq i \leq d/2$.
\end{lemma}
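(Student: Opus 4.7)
My plan is to deduce the inequality $\rho_{i-1} \leq \rho_i$ by exhibiting an injective linear map $U_{i-1} \to U_i$ whenever $1 \leq i \leq d/2$; the result then follows from Lemma \ref{lem:uee}, which identifies $\rho_j = \dim U_j$. The natural candidate is the raising map, available thanks to Lemma \ref{lem:RL}: for each $j$ with $0 \leq j \leq d-1$ the operator
\begin{align*}
R_j = A - \theta_{d-j} I
\end{align*}
sends $U_j$ into $U_{j+1}$, and for each $j$ with $1 \leq j \leq d$ the operator
\begin{align*}
L_j = A^* - \theta^*_j I
\end{align*}
sends $U_j$ into $U_{j-1}$. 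I would view $V = \bigoplus_{j=0}^d U_j$ (Lemma \ref{lem:dVsum}) as carrying a raising/lowering structure with respect to which these maps shift degree by $\pm 1$, with boundary conditions $R_d = 0$ on $U_d$ and $L_0 = 0$ on $U_0$.

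The core of the argument would be to show that $R_{i-1}:U_{i-1}\to U_i$ is injective when $i \leq d/2$. Suppose $u \in U_{i-1}$ satisfies $R_{i-1}u = 0$; then $Au = \theta_{d-i+1}u$, so $u \in E_{d-i+1}V$. On the other hand, \eqref{eq:abbrev} gives $u \in E_0V+\cdots+E_{d-i+1}V$, which is consistent, and also $u \in E^*_0V+\cdots+E^*_{i-1}V$. The strategy is to iterate $L_{i-1},L_{i-2},\ldots,L_1$ on $u$, track the $U_j$ in which each iterate lies, and then generate the cyclic subspace $W = \mathbb F\langle A,A^*\rangle u$, showing that $W$ is contained in a proper $(A,A^*)$-stable subspace of $V$. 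The bound $i\leq d/2$ would enter through the inclusions from Lemma \ref{lem:split}, ensuring that the $A$-orbit cannot reach into $E_0V+\cdots+E_dV$ beyond index $d-i$ and the $A^*$-orbit cannot reach beyond index $i-1$, so that $W \subseteq \bigl(E^*_0V+\cdots+E^*_{i-1}V\bigr)\cap\bigl(E_0V+\cdots+E_{d-i+1}V\bigr)$ remains proper. The irreducibility axiom Definition \ref{def:tdp}(iv) would then force $W=0$ and hence $u=0$.

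The main obstacle is verifying that the iterated orbit really is confined to this proper subspace. Each application of $L_j$ drops the $U$-index by one but may in principle enlarge the projection onto the $E_kV$; controlling this requires combining the filtration identities of Lemma \ref{lem:split} with the tridiagonality relations \eqref{eq:t1}, \eqref{eq:t2} carefully, and tracking the joint shape of successive iterates. Once injectivity of $R_{i-1}$ is established, rank-nullity gives $\rho_{i-1} = \dim U_{i-1} \leq \dim U_i = \rho_i$, completing the proof. The symmetry $\rho_i = \rho_{d-i}$ of Lemma \ref{lem:uee} means the analogous statement for $i > d/2$ follows automatically, so it is enough to handle the left half.
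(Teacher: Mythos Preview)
The paper does not prove this lemma; it cites \cite[Corollary~6.6]{TD00} and moves on. Your reduction is the right one: it suffices to show that $R_{i-1}=(A-\theta_{d-i+1}I)$ is injective on $U_{i-1}$ for $1\leq i\leq d/2$. The gap is in your proposed proof of injectivity. The subspace you name as a container for the orbit $W$,
\[
\bigl(E^*_0V+\cdots+E^*_{i-1}V\bigr)\cap\bigl(E_0V+\cdots+E_{d-i+1}V\bigr),
\]
is by \eqref{eq:abbrev} nothing other than $U_{i-1}$ itself, and $U_{i-1}$ is invariant under neither $A$ nor $A^*$. Your claims that ``the $A$-orbit cannot reach beyond index $d-i$'' and ``the $A^*$-orbit cannot reach beyond index $i-1$'' hold only when one applies $A$ alone or $A^*$ alone; as soon as the two alternate, $A$ raises the $E^*$-filtration index and $A^*$ raises the $E$-filtration index, and nothing in the argument prevents $W$ from filling all of $V$. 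The ``main obstacle'' you flag is genuine and is not resolved by what you have written.

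A short argument, using only tools already developed in the paper, bypasses orbits entirely. Bring in the second split decomposition $\lbrack 0^*,D\rbrack$ of Example~\ref{thm:sixdecp}, and write $U^\Downarrow_j$ for its $j$th component. If $0\neq u\in U_{i-1}$ satisfies $R_{i-1}u=0$, then $u\in E_{d-i+1}V$. Because $i\leq d/2$ forces $d-i+1\geq i-1$, we have $E_{d-i+1}V\subseteq E_{i-1}V+\cdots+E_dV$, and therefore $u\in U^\Downarrow_{i-1}$. By the row $\lbrack 0^*,D\rbrack$ of Lemma~\ref{thm:aaction}, $(A-\theta_{i-1}I)U^\Downarrow_{i-1}\subseteq U^\Downarrow_i$; but $(A-\theta_{i-1}I)u=(\theta_{d-i+1}-\theta_{i-1})u$ is a nonzero multiple of $u$ since $d-i+1\neq i-1$. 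Hence $u\in U^\Downarrow_{i-1}\cap U^\Downarrow_i=0$, a contradiction. Irreducibility is used only indirectly, through the existence of the decomposition $\lbrack 0^*,D\rbrack$.
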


\begin{definition}\label{def:PhShape} \rm By the {\it shape of $\Phi$} we mean the sequence $\lbrace \rho_i \rbrace_{i=0}^d$ from Lemma \ref{lem:uee}.
\end{definition}

\noindent A TD system of shape $(1,1,\ldots, 1)$ is often called  a Leonard system \cite[Definition~1.4]{LS99}, \cite[Lemma~2.2]{qSerre}.
\medskip

\section{Decompositions and flags}
We continue to discuss the nontrivial TD system $ \Phi=(A;\lbrace E_i\rbrace_{i=0}^d;A^*;\lbrace E^*_i\rbrace_{i=0}^d)$ on $V$. In the previous
section we used $\Phi$ to obtain a sequence $\lbrace U_i \rbrace_{i=0}^d$ of subspaces of $V$. An analogous sequence is obtained from $\Phi^\downarrow$,
$\Phi^\Downarrow$, $\Phi^{\downarrow \Downarrow}$.
In this section we interpret the resulting sequences in a comprehensive way.

\medskip
\noindent First we define our terms. By a {\it decomposition of $V$} we mean a sequence  $\lbrace \mathcal V_i \rbrace_{i=0}^d$ of nonzero subspaces whose direct sum is equal to $V$.
Let $\lbrace \mathcal V_i \rbrace_{i=0}^d$ denote a decomposition of $V$.
For $0 \leq i \leq d$ we call $\mathcal V_i$ the {\it $i^{\rm th}$ component} of 
the decomposition.
For notational convenience, define $\mathcal V_{-1}=0$ and $\mathcal V_{d+1}=0$. 

\begin{example} \label{ex:decomp} \rm Each of the following sequences is a decomposition of $V$:
\begin{enumerate}
\item[\rm (i)] 
$\lbrace E_iV\rbrace_{i=0}^d$;
\item[\rm (ii)]  $\lbrace E^*_iV\rbrace_{i=0}^d$;
\item[\rm (iii)] the sequence $\lbrace U_i\rbrace_{i=0}^d$ from \eqref{eq:abbrev}.
\end{enumerate}
\end{example}

\noindent Let $\lbrace \mathcal V_i \rbrace_{i=0}^d$ denote a decomposition of $V$. For $0 \leq i \leq d$ define $s_i = {\rm dim}\,\mathcal V_i$, and note that $s_i \geq 1$. By construction $\sum_{i=0}^d s_i = {\rm dim}\,V$.
We call the sequence $\lbrace s_i \rbrace_{i=0}^d$ the {\it shape} of the decomposition $\lbrace \mathcal V_i \rbrace_{i=0}^d$.
\medskip

\noindent Next we recall the notion of a flag. 
Let $\lbrace s_i \rbrace_{i=0}^d$ denote a sequence of positive integers whose sum is equal to the dimension of $V$.
By a {\it flag on $V$ of shape $\lbrace s_i \rbrace_{i=0}^d$} we mean
a sequence $\lbrace F_i \rbrace_{i=0}^d$ of
subspaces of $V$ such that both (i) $F_{i-1}\subseteq F_i$ for $1 \leq i \leq d$;
(ii) $F_i$ has dimension $s_0+ s_1 + \cdots + s_i$ for $0 \leq i \leq d$. 
Let $\lbrace F_i\rbrace_{i=0}^d$ denote a flag on $V$. For $0 \leq i \leq d$ we call $F_i$ the {\it $i^{\rm th}$ component} of 
the flag. Note that $F_d=V$. For notational convenience, define $F_{-1}=0$ and $F_{d+1}=V$.
\medskip

\noindent
The following construction yields a flag on $V$. Let $\lbrace \mathcal V_i \rbrace_{i=0}^d$ denote a decomposition of $V$.
Define $F_i = \mathcal V_0+ \mathcal V_1+ \cdots + \mathcal V_i$ for $0 \leq i \leq d$. Then the sequence $\lbrace F_i \rbrace_{i=0}^d$ is a flag on $V$. The shape of this flag is equal to the shape of the decomposition $\lbrace \mathcal V_i \rbrace_{i=0}^d$.
We say that the flag $\lbrace F_i \rbrace_{i=0}^d$ is {\it induced} by the decomposition $\lbrace \mathcal V_i \rbrace_{i=0}^d$. 
\medskip

\noindent 
Next, we recall what it means for two flags to be opposite.
Let $\lbrace F_i \rbrace_{i=0}^d$ and 
$\lbrace F'_i \rbrace_{i=0}^d$ denote flags on $V$. These flags are called {\it opposite} whenever there exists a decomposition $\lbrace \mathcal V_i \rbrace_{i=0}^d$ of $V$ such that
$\lbrace \mathcal V_i \rbrace_{i=0}^d $
induces $\lbrace F_i\rbrace_{i=0}^d$  and  $\lbrace \mathcal V_{d-i}\rbrace_{i=0}^d$ induces $\lbrace F'_i \rbrace_{i=0}^d$. In this case $\mathcal V_i = F_i \cap F'_{d-i}$
for $0 \leq i \leq d$.
\medskip

\noindent We now use $\Phi$ to construct four flags on $V$. To keep track of these flags, we give each one a name. Let $\Omega$ denote the set
consisting of the four symbols
$0, D, 0^*, D^*$. Each flag 
gets a name $\lbrack u \rbrack$ with $u\in \Omega$.

\begin{definition}\rm
\label{lem:fourflag} 
In each row of the table below, we display a flag on $V$.
\bigskip

\centerline{
\begin{tabular}[t]{c|c}
       {\rm flag name} & {\rm $i^{\rm th}$ component of the flag }
 \\ \hline  \hline
	$\lbrack 0\rbrack$ & $E_0V+ E_1V+ \cdots + E_iV$    
	\\
	$\lbrack D\rbrack$ & $E_dV+ E_{d-1}V+ \cdots + E_{d-i}V$   \\
	$\lbrack 0^*\rbrack$ &
	$E^*_0V+E^*_1V+ \cdots + E^*_iV$  \\ 
	$\lbrack D^*\rbrack$ & 
	$E^*_{d}V+E^*_{d-1}V + \cdots +E^*_{d-i}V$
	\end{tabular}}
\medskip

\end{definition}

\noindent Referring to Definition \ref{lem:fourflag}, by construction the flags $\lbrack 0 \rbrack$, $\lbrack D \rbrack$ are opposite, and the flags  $\lbrack 0^* \rbrack$, $\lbrack D^* \rbrack$ are opposite.
In fact we have the following.
\begin{lemma} \label{lem:mop} 
The four flags in Definition
\ref{lem:fourflag} are mutually opposite.
\end{lemma}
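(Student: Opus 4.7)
The plan is to exhibit, for each of the six unordered pairs of flags from Definition 4.1, a decomposition of $V$ that induces one flag and whose reversal induces the other. Two of the six pairs are immediate: the decompositions $\lbrace E_iV\rbrace_{i=0}^d$ and $\lbrace E^*_iV\rbrace_{i=0}^d$ show by construction that $(\lbrack 0\rbrack,\lbrack D\rbrack)$ and $(\lbrack 0^*\rbrack,\lbrack D^*\rbrack)$ are opposite. The remaining four pairs are the mixed ones: $(\lbrack 0\rbrack,\lbrack 0^*\rbrack)$, $(\lbrack 0\rbrack,\lbrack D^*\rbrack)$, $(\lbrack D\rbrack,\lbrack 0^*\rbrack)$, $(\lbrack D\rbrack,\lbrack D^*\rbrack)$; these will be handled by the subspaces $\lbrace U_i\rbrace_{i=0}^d$ built from $\Phi$ and their analogs built from $\Phi^\downarrow$, $\Phi^\Downarrow$, $\Phi^{\downarrow \Downarrow}$.

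First I would treat $(\lbrack 0\rbrack,\lbrack 0^*\rbrack)$. By Lemma \ref{lem:dVsum} the subspaces $\lbrace U_i\rbrace_{i=0}^d$ form a decomposition of $V$ (their dimensions are positive by Lemma \ref{lem:uee}). By Lemma \ref{lem:split}(i) the flag induced by $\lbrace U_i\rbrace_{i=0}^d$ has $i$th component $U_0+\cdots+U_i = E^*_0V+\cdots+E^*_iV$, which is $\lbrack 0^*\rbrack$. By Lemma \ref{lem:split}(ii) with $i$ replaced by $d-i$, the reversed decomposition $\lbrace U_{d-i}\rbrace_{i=0}^d$ induces the flag with $i$th component $U_{d-i}+\cdots+U_d = E_0V+\cdots+E_iV$, which is $\lbrack 0\rbrack$. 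Hence $\lbrack 0\rbrack$ and $\lbrack 0^*\rbrack$ are opposite.

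Next I would apply the same argument to the three TD systems $\Phi^\downarrow$, $\Phi^\Downarrow$, $\Phi^{\downarrow\Downarrow}$, obtaining corresponding subspaces $\lbrace U^{\downarrow}_i\rbrace$, $\lbrace U^{\Downarrow}_i\rbrace$, $\lbrace U^{\downarrow\Downarrow}_i\rbrace$ that are decompositions of $V$. Since in $\Phi^\downarrow$ the primitive idempotent $E^*_i$ becomes $E^*_{d-i}$, the analog of Lemma \ref{lem:split} gives $U^\downarrow_0+\cdots+U^\downarrow_i = E^*_dV+\cdots+E^*_{d-i}V = \lbrack D^*\rbrack_i$ and $U^\downarrow_{d-i}+\cdots+U^\downarrow_d = E_0V+\cdots+E_iV = \lbrack 0\rbrack_i$, so $\lbrack 0\rbrack$ and $\lbrack D^*\rbrack$ are opposite. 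In the same way $\Phi^\Downarrow$ shows $(\lbrack D\rbrack,\lbrack 0^*\rbrack)$ is opposite, and $\Phi^{\downarrow\Downarrow}$ shows $(\lbrack D\rbrack,\lbrack D^*\rbrack)$ is opposite.

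I do not anticipate a significant obstacle; the only point that requires care is keeping track of the relabeling of primitive idempotents under the operations ${}^\downarrow$ and ${}^\Downarrow$, and verifying that the components of the four decompositions just constructed are all nonzero (which is immediate since each one has dimension $\rho_{d-i}$ or $\rho_i$ by Lemma \ref{lem:uee}, and $\Phi$ is nontrivial with $\rho_i\geq 1$ for $0\leq i\leq d$).
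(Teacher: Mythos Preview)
Your proposal is correct and follows essentially the same approach as the paper: establish the two unmixed pairs by construction, use the decomposition $\lbrace U_i\rbrace_{i=0}^d$ together with Lemma~\ref{lem:split} to show that $\lbrack 0\rbrack$ and $\lbrack 0^*\rbrack$ are opposite, and then apply this to $\Phi^\downarrow$, $\Phi^\Downarrow$, $\Phi^{\downarrow\Downarrow}$ for the remaining three mixed pairs. Your write-up is slightly more explicit about which relative handles which pair, but the argument is the same.
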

\begin{proof} The flags $\lbrack 0 \rbrack$, $\lbrack D \rbrack$ are opposite, and the flags  $\lbrack 0^* \rbrack$, $\lbrack D^* \rbrack$ are opposite. To see that the flags $\lbrack 0 \rbrack$, $\lbrack 0^* \rbrack$ are opposite,
consider the decomposition $\lbrace U_i \rbrace_{i=0}^d $ of $V$ from \eqref{eq:abbrev}. By Lemma  \ref{lem:split}, $\lbrace U_i \rbrace_{i=0}^d$ induces $\lbrack 0^*\rbrack$ and $\lbrace U_{d-i} \rbrace_{i=0}^d $
induces $\lbrack 0 \rbrack$. Therefore the flags $\lbrack 0 \rbrack$, $\lbrack 0^*\rbrack$ are opposite. To finish the proof, apply this result to $\Phi^\downarrow$, $\Phi^\Downarrow$, $\Phi^{\downarrow \Downarrow}$.
\end{proof}

\medskip
\noindent Next, we use the four flags in Definition \ref{lem:fourflag} to construct six decompositions of $V$.
Pick distinct $u, v \in \Omega$. By Lemma \ref{lem:mop},  the flags $\lbrack u\rbrack$ and $\lbrack v \rbrack$ are opposite. Therefore, there exists a decomposition of $V$ that induces $\lbrack u \rbrack$ and whose inversion induces $\lbrack v \rbrack$.
This decomposition is unique, and denoted by $\lbrack u,v\rbrack$. Observe that the decomposition $\lbrack v, u\rbrack$ is the inversion of the decomposition $\lbrack u,v\rbrack$.
Up to inversion,
the above construction yields six decompositions of $V$. These decompositions are described in the following example.

\newpage
\begin{example} \rm
\label{thm:sixdecp} {\rm  (See \cite[Lemma~4.2]{tdanduqsl2hat}.)}
In each row of the table below, we display a decomposition of $V$.
\bigskip

\centerline{
\begin{tabular}[t]{c|c}
       {\rm decomp. name} & {\rm $i^{\rm th}$ component of the decomposition} 
 \\ \hline  \hline
	$\lbrack 0,D\rbrack$ & $E_iV$    
	\\
	$\lbrack 0^*,D^*\rbrack$ & $E^*_iV$   \\
	$\lbrack 0^*,0\rbrack$ & 
	$(E^*_{0}V+\cdots +E^*_iV) \cap
	(E_0V+\cdots +E_{d-i}V)$
	\\ 
	$\lbrack 0^*,D\rbrack$ &
	$(E^*_0V+\cdots + E^*_iV)\cap (E_iV+\cdots +E_dV)$   \\ 
        $\lbrack D^*,0\rbrack$ &
	$
	(E^*_{d-i}V+\cdots +E^*_dV) \cap
	(E_0V+\cdots + E_{d-i}V)$
	\\ 
	$ \lbrack D^*,D\rbrack $ & 
	$
	(E^*_{d-i}V+\cdots +E^*_dV)
	\cap
	(E_iV+\cdots +E_dV)
	$ 
	\end{tabular}}
\medskip
\noindent 
\end{example}

\begin{note} \label{note:u} \rm The decomposition $\lbrack 0^*,0\rbrack $ from Example \ref{thm:sixdecp} is the same as the decomposition $\lbrace U_i\rbrace_{i=0}^d$ from \eqref{eq:abbrev}.
\end{note}

\noindent  In the next result, we clarify how the six decompositions from
Example
\ref{thm:sixdecp} are related to the four flags from Definition \ref{lem:fourflag}.

\begin{lemma}
\label{thm:decsum} {\rm \rm (See \cite[Lemma~4.3]{tdanduqsl2hat}.)}
Let $\lbrace \mathcal V_i\rbrace_{i=0}^d$ 
denote a decomposition
of $V$ from
Example \ref{thm:sixdecp}.
Then for $0 \leq i \leq d$
the sums $\mathcal V_0+\cdots + \mathcal V_i$ and $\mathcal V_i+\cdots + \mathcal V_d$
are given in the table below.
\bigskip

\centerline{
\begin{tabular}[t]{c|c|c}
      {\rm decomp. name} &$\mathcal V_0+\cdots + \mathcal V_i$ & $\mathcal V_i+\cdots + \mathcal V_d$ \\ \hline  \hline
	$\lbrack 0,D\rbrack $ &
	$E_0V + \cdots + E_iV$  & $E_iV + \cdots + E_dV $  
\\
$\lbrack 0^*,D^*\rbrack $ &
	$E^*_0V+\cdots + E^*_iV$  & $ E^*_iV + \cdots + E^*_dV$   \\
      $\lbrack 0^*,0\rbrack $ & 
	$E^*_0V+\cdots +E^*_iV$  
	&
	$E_0V+\cdots +E_{d-i}V$
	\\ 
       $\lbrack 0^*,D\rbrack $ &
        $E^*_0V+\cdots + E^*_iV$ & $E_iV+\cdots +E_dV$   \\ 
       $\lbrack D^*,0\rbrack $ & 
	$E^*_{d-i}V+\cdots + E^*_dV$ 
&	
	$E_0V + \cdots +E_{d-i}V $ 
	\\ 
	$\lbrack D^*,D\rbrack $ &
	$E^*_{d-i}V+\cdots +E^*_dV$ 
&	
	$E_iV+\cdots +E_dV $
	\end{tabular}}
\medskip

\end{lemma}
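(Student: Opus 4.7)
The plan is to reduce all six rows of the table to a single general observation about decompositions and their induced flags, and then read off the answer directly from Definition \ref{lem:fourflag}.

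First I would record the following general fact. If $\lbrace \mathcal V_i\rbrace_{i=0}^d$ is any decomposition of $V$ and $\lbrace F_i\rbrace_{i=0}^d$ is the flag it induces, then by the very definition of ``induces'' one has $\mathcal V_0+\cdots +\mathcal V_i=F_i$ for $0\leq i\leq d$. The inversion $\lbrace \mathcal V_{d-i}\rbrace_{i=0}^d$ induces the flag whose $i^{\rm th}$ component is $\mathcal V_d+\mathcal V_{d-1}+\cdots +\mathcal V_{d-i}$; writing $j=d-i$ this says $\mathcal V_j+\cdots +\mathcal V_d$ is the $(d-j)^{\rm th}$ component of that inverted flag.

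Next I would apply this to the decomposition $\mathcal V_i=[u,v]_i$ for distinct $u,v\in\Omega$. By the construction preceding Example \ref{thm:sixdecp}, $\lbrace \mathcal V_i\rbrace_{i=0}^d$ induces the flag $\lbrack u\rbrack$ and its inversion induces $\lbrack v\rbrack$. Combined with the general fact above, this gives
\begin{align*}
\mathcal V_0+\cdots +\mathcal V_i &= (\text{$i^{\rm th}$ component of $\lbrack u\rbrack$}),\\
\mathcal V_i+\cdots +\mathcal V_d &= (\text{$(d-i)^{\rm th}$ component of $\lbrack v\rbrack$}),
\end{align*}
for $0\leq i\leq d$. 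Each of the six rows of the table is now a direct substitution using Definition \ref{lem:fourflag}. For instance, for $\lbrack 0^*,0\rbrack$, the head sum is the $i^{\rm th}$ component of $\lbrack 0^*\rbrack$, namely $E^*_0V+\cdots +E^*_iV$, and the tail sum is the $(d-i)^{\rm th}$ component of $\lbrack 0\rbrack$, namely $E_0V+\cdots +E_{d-i}V$, matching the third row.

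The only place that demands a bit of care is the bookkeeping for the reversed flags $\lbrack D\rbrack$ and $\lbrack D^*\rbrack$: by Definition \ref{lem:fourflag}, the $j^{\rm th}$ component of $\lbrack D\rbrack$ is $E_dV+E_{d-1}V+\cdots +E_{d-j}V$, so setting $j=d-i$ the tail sum $\mathcal V_i+\cdots +\mathcal V_d$ collapses to $E_dV+\cdots +E_iV$, which is exactly what appears in rows $\lbrack 0,D\rbrack$, $\lbrack 0^*,D\rbrack$, $\lbrack D^*,D\rbrack$ of the table. The analogous collapse handles $\lbrack D^*\rbrack$ in rows $\lbrack 0^*,D^*\rbrack$, $\lbrack D^*,0\rbrack$, $\lbrack D^*,D\rbrack$. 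No further computation is needed; the main ``obstacle'' is simply keeping the reversed indexing straight, and no use of the TD structure beyond Lemma \ref{lem:mop} (which produced the decompositions in the first place) is required.
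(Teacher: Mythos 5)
Your argument is exactly the paper's: the table entries follow immediately from the construction of the decomposition $\lbrack u,v\rbrack$ as inducing the flag $\lbrack u\rbrack$ with inversion inducing $\lbrack v\rbrack$, combined with the explicit descriptions in Definition \ref{lem:fourflag}. The paper's proof reads simply ``By the discussion above Example \ref{thm:sixdecp},'' and you have correctly unpacked that discussion, including the bookkeeping for the reversed flags $\lbrack D\rbrack$ and $\lbrack D^*\rbrack$.
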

\begin{proof} By the discussion above Example \ref{thm:sixdecp}.
\end{proof}

\noindent Recall the shape $\lbrace \rho_i \rbrace_{i=0}^d$ of $\Phi$ from Definition \ref{def:PhShape}.

\begin{lemma} {\rm (See \cite[Lemma~4.4]{tdanduqsl2hat}.)}
\label{lem:shape}
Each flag in Definition \ref{lem:fourflag}
has shape $\lbrace \rho_i \rbrace_{i=0}^d$. Each decomposition in
Example \ref{thm:sixdecp}
has shape $\lbrace \rho_i \rbrace_{i=0}^d$.
\end{lemma}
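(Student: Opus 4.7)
The plan is to reduce the entire statement to Lemma \ref{lem:uee}, together with the symmetry obtained by passing to $\Phi^\downarrow$, $\Phi^\Downarrow$, $\Phi^{\downarrow\Downarrow}$. Recall that Lemma \ref{lem:uee} supplies two facts we will use constantly: $\dim E_iV = \dim E^*_iV = \dim U_i = \rho_i$ for $0\leq i\leq d$, and the palindromy $\rho_i=\rho_{d-i}$.

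First I would handle the four flags of Definition \ref{lem:fourflag}. For the flag $[0]$, the $i$th component is $E_0V+\cdots+E_iV$, and since the sum $V=\sum_{j=0}^d E_jV$ is direct this component has dimension $\rho_0+\rho_1+\cdots+\rho_i$, so $[0]$ has shape $\{\rho_i\}_{i=0}^d$ by the definition of flag shape. For $[D]$ the $i$th component is $E_dV+E_{d-1}V+\cdots+E_{d-i}V$ of dimension $\rho_d+\rho_{d-1}+\cdots+\rho_{d-i}$; the palindromy $\rho_j=\rho_{d-j}$ collapses this to $\rho_0+\rho_1+\cdots+\rho_i$, so $[D]$ again has shape $\{\rho_i\}_{i=0}^d$. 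The two starred flags $[0^*]$ and $[D^*]$ are treated identically, replacing $E_j$ with $E^*_j$ and using $\dim E^*_jV=\rho_j$.

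Next I would treat the six decompositions of Example \ref{thm:sixdecp}. Three cases are essentially immediate from Lemma \ref{lem:uee}: the $i$th component of $[0,D]$ is $E_iV$ of dimension $\rho_i$; of $[0^*,D^*]$ is $E^*_iV$ of dimension $\rho_i$; and of $[0^*,0]$ is $U_i$ of dimension $\rho_i$ (invoking Note \ref{note:u}). For the remaining three decompositions I would apply the $[0^*,0]$ case to the relabeled TD systems. Comparing the tables in Example \ref{thm:sixdecp} with \eqref{eq:abbrev}, one checks termwise that $[D^*,0]$ is the $\{U_i\}$-decomposition built from $\Phi^\downarrow$, that $[0^*,D]$ is the $\{U_i\}$-decomposition built from $\Phi^\Downarrow$, and that $[D^*,D]$ is the $\{U_i\}$-decomposition built from $\Phi^{\downarrow\Downarrow}$. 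Lemma \ref{lem:uee}, applied in turn to each of these three TD systems, shows that the $i$th component has dimension equal to the $i$th shape entry of that system.

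The only loose end is that $\Phi^\downarrow$, $\Phi^\Downarrow$, $\Phi^{\downarrow\Downarrow}$ all share the shape $\{\rho_i\}_{i=0}^d$ with $\Phi$; this is immediate from the palindromy $\rho_i=\rho_{d-i}$, since these relabelings merely invert the orderings of the primitive idempotents. The main obstacle, insofar as there is one, is not analytic but combinatorial: correctly matching each of the three ``mixed'' decompositions $[0^*,D]$, $[D^*,0]$, $[D^*,D]$ with the $\{U_i\}$-construction for the appropriate relabeling of $\Phi$. Once that identification is made, everything follows by inspection from Lemma \ref{lem:uee}.
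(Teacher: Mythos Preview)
Your argument is correct and matches the paper's proof, which is simply ``By Lemma \ref{lem:uee} and the construction''; you have just unpacked that one-liner explicitly, including the use of the relabelings $\Phi^\downarrow$, $\Phi^\Downarrow$, $\Phi^{\downarrow\Downarrow}$ (which the paper already used in the proof of Lemma \ref{lem:mop}). A minor streamlining: once you know the four flags all have shape $\{\rho_i\}_{i=0}^d$, the decomposition claim follows immediately since each $[u,v]$ induces $[u]$ and a decomposition has the same shape as the flag it induces---no relabeling needed.
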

\begin{proof} By Lemma \ref{lem:uee} and the construction.
\end{proof}

\noindent Next, we
 describe the actions of
$A$ and $A^*$ on each of the six decompositions from Example \ref{thm:sixdecp}.

\begin{lemma} {\rm (See \cite[Lemma~5.1]{tdanduqsl2hat}.)}
\label{thm:aaction}
Let $\lbrace \mathcal V_i \rbrace_{i=0}^d$ 
denote a decomposition
of $V$ from
Example \ref{thm:sixdecp}.
Then for $0 \leq i \leq d$
the actions of $A$ and $A^*$ on $\mathcal V_i$ are described in the table below.

\bigskip

\centerline{
\begin{tabular}[t]{c|c|c}
      {\rm decomp. name} &{\rm action of $A$ on $\mathcal V_i$} & {\rm action of $A^*$ on $\mathcal V_i$}
      \\ \hline  \hline
	$\lbrack 0,D\rbrack $ &
	$ (A-\theta_iI)\mathcal V_i=0$  & 
	$A^* \mathcal V_i \subseteq \mathcal V_{i-1}+ \mathcal V_i+\mathcal V_{i+1}$ 
\\	
	$\lbrack 0^*,D^*\rbrack $ &
	$A \mathcal V_i \subseteq \mathcal V_{i-1}+ \mathcal V_i+\mathcal V_{i+1}$  &
	$ (A^*-\theta^*_iI)\mathcal V_i=0$   \\
      $\lbrack 0^*,0\rbrack $ & 
	$(A-\theta_{d-i}I)\mathcal V_i\subseteq \mathcal V_{i+1}$ &
	$(A^*-\theta^*_iI)\mathcal V_i \subseteq \mathcal V_{i-1}$   \\ 
       $\lbrack 0^*,D\rbrack $ &
        $(A-\theta_iI)\mathcal V_i\subseteq \mathcal V_{i+1}$ &
	$(A^*-\theta^*_iI)\mathcal V_i \subseteq \mathcal V_{i-1}$   \\ 
       $\lbrack D^*,0\rbrack $ & 
	$(A-\theta_{d-i}I)\mathcal V_i\subseteq \mathcal V_{i+1}$ &
	$(A^*-\theta^*_{d-i}I)\mathcal V_i \subseteq \mathcal V_{i-1}$   \\ 
	$\lbrack D^*,D\rbrack $ &
	$(A-\theta_iI)\mathcal V_i\subseteq \mathcal V_{i+1}$ &
	$(A^*-\theta^*_{d-i}I)\mathcal V_i \subseteq \mathcal V_{i-1}$  
	\end{tabular}}
\medskip

\end{lemma}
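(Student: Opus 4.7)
The proof splits naturally into three cases according to how each decomposition is built. I would handle them in order of increasing complexity, using the variants of $\Phi$ from Lemma 3.3 as the main organizing device.

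First I would dispatch the two ``diagonal'' rows $[0,D]$ and $[0^*,D^*]$. For $[0,D]$, the components are $\mathcal V_i = E_iV$, so $(A-\theta_i I)\mathcal V_i = 0$ is immediate and $A^* \mathcal V_i \subseteq \mathcal V_{i-1} + \mathcal V_i + \mathcal V_{i+1}$ is Definition \ref{def:tdp}(ii). The row for $[0^*,D^*]$ follows identically, with the roles of $A$ and $A^*$ swapped (equivalently, apply the same argument to $\Phi^*$).

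Next I would treat the row $[0^*,0]$, which is the key case. By Note \ref{note:u} this decomposition is exactly $\{U_i\}_{i=0}^d$ from \eqref{eq:abbrev}, and Lemma \ref{lem:RL} gives the four required inclusions directly: parts (i)--(ii) supply $(A^*-\theta^*_i I)\mathcal V_i \subseteq \mathcal V_{i-1}$ and parts (iii)--(iv) supply $(A-\theta_{d-i} I)\mathcal V_i \subseteq \mathcal V_{i+1}$ (with the boundary conventions $\mathcal V_{-1}=\mathcal V_{d+1}=0$ absorbing the $i=0$ and $i=N$ endpoints).

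Finally, the remaining three rows come from applying the $[0^*,0]$ result to the reindexed TD systems $\Phi^\Downarrow$, $\Phi^\downarrow$, $\Phi^{\downarrow\Downarrow}$. The main check is bookkeeping: under $\Phi^\Downarrow$ the primitive idempotents of $A$ are relabeled $E_i \mapsto E_{d-i}$, so the ``$U_i$'' of $\Phi^\Downarrow$ equals $(E^*_0V+\cdots+E^*_iV)\cap(E_iV+\cdots+E_dV)$, which is the $[0^*,D]$ decomposition; the eigenvalue $\theta_{d-i}$ that appears in Lemma \ref{lem:RL}(iii) for $\Phi^\Downarrow$ becomes $\theta_i$ back in $\Phi$, matching the table. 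Similarly $\Phi^\downarrow$ produces the $[D^*,0]$ decomposition with $\theta^*_i$ replaced by $\theta^*_{d-i}$, and $\Phi^{\downarrow\Downarrow}$ produces $[D^*,D]$ with both swaps. Verifying the intersections and the eigenvalue substitutions line up with the table entries is the only nontrivial part, and that is a direct calculation from the definitions of $\Phi^\downarrow$, $\Phi^\Downarrow$ in Lemma 3.3.

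The main obstacle is not any real difficulty but purely the index management in this last step: one must be careful that, for instance, the $i$th component of the $[0^*,D]$ decomposition arising from Example \ref{thm:sixdecp} corresponds to the $i$th component of $\{U_i\}$ for $\Phi^\Downarrow$ (rather than to the inverted enumeration $\{U_{d-i}\}$), and likewise for the other two cases, so that the eigenvalue labels $\theta_i$ versus $\theta_{d-i}$ and $\theta^*_i$ versus $\theta^*_{d-i}$ come out as displayed.
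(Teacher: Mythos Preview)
Your proposal is correct and follows essentially the same approach as the paper's proof: handle $[0,D]$ and $[0^*,D^*]$ via Definition~\ref{def:tdp}, handle $[0^*,0]$ via Lemma~\ref{lem:RL} and Note~\ref{note:u}, and obtain the remaining three rows by applying Lemma~\ref{lem:RL} to $\Phi^\downarrow$, $\Phi^\Downarrow$, $\Phi^{\downarrow\Downarrow}$. Your explicit bookkeeping check (matching the $U_i$ of each relative to the correct row and eigenvalue label) is more detailed than what the paper writes, but the argument is the same.
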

\begin{proof} For the decompositions $\lbrack 0,D\rbrack $ and $\lbrack 0^*,D^*\rbrack $ the result holds by Definition \ref{def:tdp}.
For the decomposition  $\lbrack 0^*,0\rbrack $ the result holds by Lemma \ref{lem:RL} and Note \ref{note:u}. To get the result for the remaining decompositions,
apply
Lemma \ref{lem:RL} to  $\Phi^\downarrow$, $\Phi^\Downarrow$, $\Phi^{\downarrow \Downarrow}$.
\end{proof}

\noindent Here is another version of Lemma \ref{thm:aaction}.

\begin{corollary} 
\label{thm:aaction2}
Let $\lbrace \mathcal V_i \rbrace_{i=0}^d$ 
denote a decomposition
of $V$ from
Example \ref{thm:sixdecp}.
Then for $0 \leq i \leq d$
the actions of $A$ and $A^*$ on $\mathcal V_i$ are described in the table below.

\bigskip

\centerline{
\begin{tabular}[t]{c|c|c}
      {\rm decomp. name} &{\rm action of $A$ on $\mathcal V_i$} & {\rm action of $A^*$ on $\mathcal V_i$}
      \\ \hline  \hline
	$\lbrack 0,D\rbrack $ &
	$ A \mathcal V_i\subseteq \mathcal V_i$  & 
	$A^* \mathcal V_i \subseteq \mathcal V_{i-1}+ \mathcal V_i+\mathcal V_{i+1}$ 
\\	
	$\lbrack 0^*,D^*\rbrack $ &
	$A \mathcal V_i \subseteq \mathcal V_{i-1}+ \mathcal V_i+\mathcal V_{i+1}$  &
	$ A^* \mathcal V_i\subseteq \mathcal V_i$   \\
       $\lbrack 0^*,0\rbrack $ & 
	$A \mathcal V_i\subseteq \mathcal V_i + \mathcal V_{i+1}$ &
	$A^*\mathcal V_i \subseteq \mathcal V_{i-1}+ \mathcal V_i$   \\ 
       $\lbrack 0^*,D\rbrack $ &
        $A \mathcal V_i\subseteq \mathcal V_i + \mathcal V_{i+1}$ &
	$A^* \mathcal V_i \subseteq \mathcal V_{i-1} + \mathcal V_i$   \\ 
       $\lbrack D^*,0\rbrack $ & 
	$A\mathcal V_i\subseteq \mathcal V_i + \mathcal V_{i+1}$ &
	$A^*\mathcal V_i \subseteq \mathcal V_{i-1}+ \mathcal V_i$   \\ 
	$\lbrack D^*,D\rbrack $ &
	$ A \mathcal V_i\subseteq \mathcal V_i + \mathcal V_{i+1}$ &
	$A^* \mathcal V_i \subseteq \mathcal V_{i-1}+\mathcal V_i$  
	\end{tabular}}
\medskip

\end{corollary}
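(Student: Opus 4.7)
The plan is to derive Corollary \ref{thm:aaction2} as a direct consequence of Lemma \ref{thm:aaction}, using only the trivial observation that for any $X \in {\rm End}(V)$, any scalar $\lambda \in \mathbb F$, and any subspace $W \subseteq V$,
\begin{align*}
(X - \lambda I)\mathcal V_i \subseteq W \quad \Longrightarrow \quad X \mathcal V_i \subseteq \mathcal V_i + W.
\end{align*}
This follows because $X \mathcal V_i = (X - \lambda I)\mathcal V_i + \lambda \mathcal V_i \subseteq W + \mathcal V_i$.

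First I would handle the two ``diagonal'' rows. For the decomposition $\lbrack 0,D\rbrack$, Lemma \ref{thm:aaction} gives $(A - \theta_i I)\mathcal V_i = 0$, which is even stronger than $A \mathcal V_i \subseteq \mathcal V_i$; and the action of $A^*$ is tridiagonal in both versions, so the assertion for $A^*$ is unchanged. A symmetric argument settles $\lbrack 0^*, D^*\rbrack$.

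Next I would handle the four ``split'' rows $\lbrack 0^*,0\rbrack$, $\lbrack 0^*,D\rbrack$, $\lbrack D^*,0\rbrack$, $\lbrack D^*,D\rbrack$ uniformly. In each of these rows, Lemma \ref{thm:aaction} states that the action of $A$ on $\mathcal V_i$ satisfies $(A - \mu_i I)\mathcal V_i \subseteq \mathcal V_{i+1}$ for some eigenvalue $\mu_i \in \{\theta_i, \theta_{d-i}\}$, and the action of $A^*$ on $\mathcal V_i$ satisfies $(A^* - \mu^*_i I)\mathcal V_i \subseteq \mathcal V_{i-1}$ for some $\mu^*_i \in \{\theta^*_i, \theta^*_{d-i}\}$. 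By the observation above, this immediately gives $A \mathcal V_i \subseteq \mathcal V_i + \mathcal V_{i+1}$ and $A^*\mathcal V_i \subseteq \mathcal V_{i-1} + \mathcal V_i$, which matches each of the four rows of the target table.

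There is no real obstacle here; the corollary is simply a weakening of Lemma \ref{thm:aaction} that records in uniform language (without mentioning specific eigenvalues) how $A$ and $A^*$ act in (block) diagonal, (block) upper/lower bidiagonal, or (block) tridiagonal fashion on each of the six decompositions. The only thing worth being careful about is matching the indices: for the four split decompositions, the eigenvalue subtracted in Lemma \ref{thm:aaction} depends on the decomposition, but after passing to the weaker inclusion the dependence disappears, yielding the uniform entries in the table.
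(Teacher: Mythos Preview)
Your proposal is correct and follows essentially the same approach as the paper, whose proof reads simply ``By Lemma \ref{thm:aaction}.'' You have merely spelled out the one-line observation that $(X-\lambda I)\mathcal V_i\subseteq W$ implies $X\mathcal V_i\subseteq \mathcal V_i+W$, which is exactly what the paper leaves implicit.
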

\begin{proof} By Lemma \ref{thm:aaction}.
\end{proof}


\section{The tetrahedron diagram}
We continue to discuss the nontrivial TD system $ \Phi=(A;\lbrace E_i\rbrace_{i=0}^d;A^*;\lbrace E^*_i\rbrace_{i=0}^d)$ on $V$.
In the previous  section we used $\Phi$ to construct six decompositions of $V$. In this section we draw a diagram that illustrates how these six decompositions are related.
\medskip

\noindent  Let  $\lbrace \mathcal V_i \rbrace_{i=0}^d$ denote a decomposition of $V$.
We describe this decomposition by the diagram
\begin{center}
\begin{picture}(0,20)
\put(-100,0){\line(1,0){200}}
\put(-102,-3){$\bullet$}	
\put(-104,-20){$\mathcal V_0$}
\put(-72,-3){$\bullet$}	
\put(-74,-20){$\mathcal V_1$}
\put(-42,-3){$\bullet$}
\put(-44,-20){$\mathcal V_2$}
\put(5,-20) {$\cdots $} 
	\put(67,-3){$\bullet$}	
	\put(60,-20){$\mathcal V_{d-1}$}
	
\put(97,-3){$\bullet$}	
\put(95,-20){$\mathcal V_d$}
\end{picture}
\end{center}
\vspace{1cm}

\noindent  The labels $\mathcal V_i$ might be suppressed, if they are clear from the context.
\medskip

\noindent Let $\lbrace \mathcal V_i\rbrace_{i=0}^d$ and 
 $\lbrace \mathcal V'_i\rbrace_{i=0}^d$ denote decompositions of $V$.
The condition
\begin{align*} 
\mathcal V_0 + \mathcal V_1 + \cdots + \mathcal V_i = \mathcal V'_0 + \mathcal V'_1 + \cdots + \mathcal V'_i \qquad  (0 \leq i \leq d)
\end{align*}
will be described by the diagram
\medskip

\begin{center}
\begin{picture}(0,60)
\put(-100,0){\line(5,1){200}}
\put(-100,0){\line(5,-1){200}}

\put(-102,-3){$\bullet$}	
\put(-72,-9){$\bullet$}	
\put(-42,-15){$\bullet$}	
\put(67,-37){$\bullet$}	
\put(97,-43){$\bullet$}

\put(-109,-20){$\mathcal V'_0$}
\put(-79,-26){$\mathcal V'_1$}
\put(-49,-32){$\mathcal V'_2$}
\put(-5,-38) {$\cdot$}	
\put(5,-40) {$\cdot$}	
\put(15,-42) {$\cdot$}	
\put(55,-54){$\mathcal V'_{d-1}$}
\put(90,-60){$\mathcal V'_d$}

\put(-102,-3){$\bullet$}	
\put(-72,3){$\bullet$}	
\put(-42,9){$\bullet$}	
\put(67,31){$\bullet$}	
\put(97,37){$\bullet$}

\put(-109,12){$\mathcal V_0$}
\put(-79,18){$\mathcal V_1$}
\put(-49,24){$\mathcal V_2$}
\put(-5,31) {$\cdot$}	
\put(5,33) {$\cdot$}	
\put(15,35) {$\cdot$}	
\put(55,46){$\mathcal V_{d-1}$}
\put(90,52){$\mathcal V_d$}

\end{picture}
\end{center}
\vspace{2cm}

\noindent To illustrate the above diagram convention, consider the decomposition $\lbrace U_i \rbrace_{i=0}^d$ of $V$ from \eqref{eq:abbrev}. By Lemma \ref{lem:split} we have
\begin{align*}
&U_0 + \cdots + U_i = E^*_0V+\cdots + E^*_iV,
\qquad \qquad U_i + \cdots + U_d = E_0V+\cdots + E_{d-i}V
\end{align*}
 for $0 \leq i \leq d$.
The corresponding diagram is shown below:

\begin{center}

\begin{picture}(100,80)

\put(-50,50){\line(1,0){100}}


\put(50,-50){\line(-1,0){100}}

\put(-50,-50){\line(0,1){100}}

\put(-52.5,47){$\bullet$}
\put(-37.5,47){$\bullet$}
\put(-22.5,47){$\bullet$}

\put(-5,50){$\cdots$}

\put(47.5,47){$\bullet$}
\put(32.5,47){$\bullet$}
\put(17.5,47){$\bullet$}

\put(-52.5,-53){$\bullet$}
\put(-37.5,-53){$\bullet$}
\put(-22.5,-53){$\bullet$}

\put(-5,-55){$\cdots$}

\put(47.5,-53){$\bullet$}
\put(32.5,-53){$\bullet$}
\put(17.5,-53){$\bullet$}

\put(-52.5,47){$\bullet$}
\put(-52.5,32){$\bullet$}
\put(-52.5,17){$\bullet$}
\put(-52.5,-23){$\bullet$}
\put(-52.5,-37){$\bullet$}
\put(-54,-3){$\vdots$}
\put(-75,-3){$\vdots$}

\put(-60,60){$E^*_0V$}
\put(40,60){$E^*_dV$}
\put(-60,-65){$E_0V$}
\put(40,-65){$E_dV$}

\put(-80,47){$U_0$}
\put(-80,32){$U_1$}
\put(-80,17){$U_2$}
\put(-80,-23){$U_{d-2}$}
\put(-80,-37){$U_{d-1}$}
\put(-80,-52){$U_d$}


\end{picture}
\end{center}
\vspace{2.5cm}
\noindent In Example \ref{thm:sixdecp} we gave six decompositions of $V$. The corresponding diagram is shown below:

\begin{center}

\begin{picture}(100,80)

\put(-50,50){\line(1,0){50}}
\put(0,50){\line(1,0){50}}

\put(50,50){\line(0,-1){50}}
\put(50,0){\line(0,-1){50}}

\put(50,-50){\line(-1,0){50}}
\put(0,-50){\line(-1,0){50}}

\put(-50,-50){\line(0,1){50}}
\put(-50,0){\line(0,1){50}}

\put(0,0){\line(-1,1){25}}
\put(-25,25){\line(-1,1){25}}
\put(0,0){\line(1,-1){25}}
\put(25,-25){\line(1,-1){25}}

\put(3,3){\line(1,1){22}}
\put(25,25){\line(1,1){25}}
\put(-3,-3){\line(-1,-1){22}}
\put(-25,-25){\line(-1,-1){25}}

\put(-60,60){$E^*_0V$}
\put(40,60){$E^*_dV$}
\put(-60,-65){$E_0V$}
\put(40,-65){$E_dV$}

\put(-52.5,47){$\bullet$}
\put(-37.5,47){$\bullet$}
\put(-22.5,47){$\bullet$}

\put(-5,50){$\cdots$}

\put(47.5,47){$\bullet$}
\put(32.5,47){$\bullet$}
\put(17.5,47){$\bullet$}

\put(-52.5,-53){$\bullet$}
\put(-37.5,-53){$\bullet$}
\put(-22.5,-53){$\bullet$}

\put(-5,-55){$\cdots$}

\put(47.5,-53){$\bullet$}
\put(32.5,-53){$\bullet$}
\put(17.5,-53){$\bullet$}

\put(-52.5,32){$\bullet$}
\put(-52.5,17){$\bullet$}
\put(-52.5,-23){$\bullet$}
\put(-52.5,-37){$\bullet$}
\put(-54,-3){$\vdots$}
\put(52,-3){$\vdots$}

\put(47.5,32){$\bullet$}
\put(47.5,17){$\bullet$}
\put(47.5,-23){$\bullet$}
\put(47.5,-37){$\bullet$}

\put(-37.5,32){$\bullet$}
\put(-22.5,17){$\bullet$}
\put(17.5,-23){$\bullet$}
\put(32,-37){$\bullet$}

\put(32.5,32){$\bullet$}
\put(17.5,17){$\bullet$}
\put(-22.5,-23){$\bullet$}
\put(-37,-37){$\bullet$}

\end{picture}
\end{center}

\vspace{2.5cm}

\noindent This diagram is called the {\it tetrahedron diagram} of $\Phi$.
\medskip

\noindent Next we use the tetrahedron diagram to illustrate Corollary
\ref{thm:aaction2}.
The following picture shows how $A$ acts on the  decompositions of $V$ from the tetrahedron diagram, for $d=8$:

\begin{center}
\begin{picture}(100,80)

\put(-50,50){\line(1,0){50}}
\put(0,50){\line(1,0){50}}

\put(50,50){\line(0,-1){50}}
\put(50,0){\line(0,-1){50}}

\put(50,-50){\line(-1,0){50}}
\put(0,-50){\line(-1,0){50}}

\put(-50,-50){\line(0,1){50}}
\put(-50,0){\line(0,1){50}}

\put(0,0){\line(-1,1){25}}
\put(-25,25){\line(-1,1){25}}
\put(0,0){\line(1,-1){25}}
\put(25,-25){\line(1,-1){25}}

\put(3,3){\line(1,1){22}}
\put(25,25){\line(1,1){25}}
\put(-3,-3){\line(-1,-1){22}}
\put(-25,-25){\line(-1,-1){25}}

\put(-60,60){$E^*_0V$}
\put(40,60){$E^*_dV$}
\put(-60,-65){$E_0V$}
\put(40,-65){$E_dV$}
\put(-140,0){$A$ action:}
\put(-52.5,47){$\bullet$}
\put(-40,47){$\bullet$}
\put(-27.5,47){$\bullet$}
\put(-15,47){$\bullet$}
\put(-2.5,47){$\bullet$}

\put(47,47){$\bullet$}
\put(34.5,47){$\bullet$}
\put(22,47){$\bullet$}
\put(9.5,47){$\bullet$}

\put(-12.5,50){\circle{9}}
\put(0,50){\circle{9}}
\put(0,50){\circle{12}}
\put(12.5,50){\circle{9}}
\put(-52.5,-53){$\bullet$}
\put(-40,-53){$\bullet$}
\put(-27.5,-53){$\bullet$}
\put(-15,-53){$\bullet$}
\put(-2.5,-53){$\bullet$}

\put(47.5,-53){$\bullet$}
\put(35,-53){$\bullet$}
\put(22.5,-53){$\bullet$}
\put(10,-53){$\bullet$}

\put(0,-50){\circle{9}}
\put(0,-50){\circle{12}}
\put(-52.5,34.5){$\bullet$}
\put(-52.5,22){$\bullet$}
\put(-52.5,9.5){$\bullet$}
\put(-52.5,-3){$\bullet$}
\put(-52.5,-15.5){$\bullet$}
\put(-52.5,-28){$\bullet$}
\put(-52.5,-40.5){$\bullet$}

\put(-50,12.5){\circle{9}}
\put(-50,12.5){\circle{12}}
\put(-50,0){\circle{9}}
\put(50,12.5){\circle{9}}
\put(50,12.5){\circle{12}}
\put(50,0){\circle{9}}

\put(-25,-25){\circle{9}}
\put(-12.5,-12.5){\circle{12}}
\put(-12.5,-12.5){\circle{9}}
\put(25,-25){\circle{9}}
\put(12.5,-12.5){\circle{12}}
\put(12.5,-12.5){\circle{9}}

\put(47.5,34.5){$\bullet$}
\put(47.5,22){$\bullet$}
\put(47.5,9.5){$\bullet$}
\put(47.5,-3){$\bullet$}
\put(47.5,-15.5){$\bullet$}
\put(47.5,-28){$\bullet$}
\put(47.5,-40.5){$\bullet$}

\put(-40,34.5){$\bullet$}
\put(-27.5,22){$\bullet$}
\put(-15,9.5){$\bullet$}
\put(-2.5,-3){$\bullet$}

\put(35,-40.5){$\bullet$}
\put(22.5,-28){$\bullet$}
\put(10,-15.5){$\bullet$}

\put(35,34.5){$\bullet$}
\put(22.5,22){$\bullet$}
\put(10,9.5){$\bullet$}
\put(-15,-15.5){$\bullet$}
\put(-27.5,-28){$\bullet$}
\put(-40,-40.5){$\bullet$}

\end{picture}
\end{center}

\vspace{2.5cm}
\newpage
\noindent The following picture shows how $A^*$ acts on the  decompositions of $V$ from the tetrahedron diagram, for $d=8$:
\begin{center}

\begin{picture}(100,80)

\put(-50,50){\line(1,0){50}}
\put(0,50){\line(1,0){50}}

\put(50,50){\line(0,-1){50}}
\put(50,0){\line(0,-1){50}}

\put(50,-50){\line(-1,0){50}}
\put(0,-50){\line(-1,0){50}}

\put(-50,-50){\line(0,1){50}}
\put(-50,0){\line(0,1){50}}

\put(0,0){\line(-1,1){25}}
\put(-25,25){\line(-1,1){25}}
\put(0,0){\line(1,-1){25}}
\put(25,-25){\line(1,-1){25}}

\put(3,3){\line(1,1){22}}
\put(25,25){\line(1,1){25}}
\put(-3,-3){\line(-1,-1){22}}
\put(-25,-25){\line(-1,-1){25}}

\put(-60,60){$E^*_0V$}
\put(40,60){$E^*_dV$}
\put(-60,-65){$E_0V$}
\put(40,-65){$E_dV$}
\put(-140,0){$A^*$ action:}
\put(-52.5,47){$\bullet$}
\put(-40,47){$\bullet$}
\put(-27.5,47){$\bullet$}
\put(-15,47){$\bullet$}
\put(-2.5,47){$\bullet$}

\put(47,47){$\bullet$}
\put(34.5,47){$\bullet$}
\put(22,47){$\bullet$}
\put(9.5,47){$\bullet$}

\put(-12.5,-50){\circle{9}}
\put(0,-50){\circle{9}}
\put(0,-50){\circle{12}}
\put(12.5,-50){\circle{9}}
\put(-52.5,-53){$\bullet$}
\put(-40,-53){$\bullet$}
\put(-27.5,-53){$\bullet$}
\put(-15,-53){$\bullet$}
\put(-2.5,-53){$\bullet$}

\put(47.5,-53){$\bullet$}
\put(35,-53){$\bullet$}
\put(22.5,-53){$\bullet$}
\put(10,-53){$\bullet$}

\put(0,50){\circle{9}}
\put(0,50){\circle{12}}
\put(-52.5,34.5){$\bullet$}
\put(-52.5,22){$\bullet$}
\put(-52.5,9.5){$\bullet$}
\put(-52.5,-3){$\bullet$}
\put(-52.5,-15.5){$\bullet$}
\put(-52.5,-28){$\bullet$}
\put(-52.5,-40.5){$\bullet$}

\put(-50,-12.5){\circle{9}}
\put(-50,-12.5){\circle{12}}
\put(-50,0){\circle{9}}
\put(50,-12.5){\circle{9}}
\put(50,-12.5){\circle{12}}
\put(50,0){\circle{9}}

\put(-25,25){\circle{9}}
\put(-12.5,12.5){\circle{12}}
\put(-12.5,12.5){\circle{9}}
\put(25,25){\circle{9}}
\put(12.5,12.5){\circle{12}}
\put(12.5,12.5){\circle{9}}

\put(47.5,34.5){$\bullet$}
\put(47.5,22){$\bullet$}
\put(47.5,9.5){$\bullet$}
\put(47.5,-3){$\bullet$}
\put(47.5,-15.5){$\bullet$}
\put(47.5,-28){$\bullet$}
\put(47.5,-40.5){$\bullet$}

\put(-40,34.5){$\bullet$}
\put(-27.5,22){$\bullet$}
\put(-15,9.5){$\bullet$}
\put(-2.5,-3){$\bullet$}

\put(35,-40.5){$\bullet$}
\put(22.5,-28){$\bullet$}
\put(10,-15.5){$\bullet$}

\put(35,34.5){$\bullet$}
\put(22.5,22){$\bullet$}
\put(10,9.5){$\bullet$}
\put(-15,-15.5){$\bullet$}
\put(-27.5,-28){$\bullet$}
\put(-40,-40.5){$\bullet$}

\end{picture}
\end{center}
\vspace{2.5cm}

\noindent We will return to the tetrahedron diagram in Section 9.
\section{Some comments about flags and decompositions}

We continue to discuss the nontrivial TD system $ \Phi=(A;\lbrace E_i\rbrace_{i=0}^d;A^*;\lbrace E^*_i\rbrace_{i=0}^d)$ on $V$.
In Section 4  we used $\Phi$ to construct six decompositions of $V$. In Lemma \ref{thm:aaction} and Corollary \ref{thm:aaction2}
 we described how
$A$ and $A^*$ act on these six decompositions. Shortly we will consider some additional  maps in ${\rm End}(V)$, and describe how these maps
act on the six decompositions. To prepare for this description, we have some general comments  about how maps in ${\rm End}(V)$ act on flags and decompositions.

\begin{definition}\rm Let $\lbrace F_i \rbrace_{i=0}^d$ denote a flag on $V$, and let $\psi \in {\rm End}(V)$. We say that
$\psi$ 
{\it stabilizes} $\lbrace F_i \rbrace_{i=0}^d$  whenever $\psi F_i \subseteq F_i$ for $0 \leq i \leq d$. We say that
$\psi$ {\it raises} $\lbrace F_i \rbrace_{i=0}^d$ whenever $\psi F_i \subseteq F_{i+1}$ for $0 \leq i \leq d-1$.
\end{definition}

\noindent For the rest of this section, the following assumption is in effect.
\begin{assumption} \label{lem:ass} \rm
Let $\lbrace \mathcal V_i \rbrace_{i=0}^d $ denote a decomposition of $V$. Let $\lbrace F_i\rbrace_{i=0}^d$ denote the flag on $V$ induced by $\lbrace \mathcal V_i\rbrace_{i=0}^d$, 
and let $\lbrace F'_i \rbrace_{i=0}^d$ denote the flag on $V$ induced by $\lbrace \mathcal V_{d-i} \rbrace_{i=0}^d$. Thus
\begin{align*}
F_i = \mathcal V_0 + \cdots + \mathcal V_i, \qquad \qquad  F'_i = \mathcal V_d  + \cdots + \mathcal V_{d-i}
\end{align*}
\noindent for $0 \leq i \leq d$.
\end{assumption}

\begin{lemma} \label{lem:RR} With reference to Assumption \ref{lem:ass}, the following are equivalent for $\psi \in {\rm End}(V)$:
\begin{enumerate}
\item[\rm (i)] $\psi$ raises $\lbrace F_i \rbrace_{i=0}^d$ and $\lbrace F'_i \rbrace_{i=0}^d$;
\item[\rm (ii)] $\psi \mathcal V_i \subseteq \mathcal V_{i-1} + \mathcal V_i + \mathcal V_{i+1} $ for $0 \leq i \leq d$.
\end{enumerate}
\end{lemma}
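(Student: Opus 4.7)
The plan is to prove the two implications separately, exploiting the fact that for each $i$ the component $\mathcal V_i$ is realized as the intersection $F_i \cap F'_{d-i}$ of the two opposite flags, and more generally that sums of consecutive components correspond to index ranges in the natural way.

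For the direction (ii) $\Rightarrow$ (i), I would just push the inclusion through the sum. If $\psi \mathcal V_j \subseteq \mathcal V_{j-1} + \mathcal V_j + \mathcal V_{j+1}$ for all $j$, then
\begin{align*}
\psi F_i \;=\; \sum_{j=0}^{i} \psi \mathcal V_j \;\subseteq\; \sum_{j=0}^{i}(\mathcal V_{j-1}+\mathcal V_j+\mathcal V_{j+1}) \;=\; \mathcal V_0+\cdots+\mathcal V_{i+1} \;=\; F_{i+1},
\end{align*}
and the identical computation applied to $\lbrace \mathcal V_{d-j}\rbrace_{j=0}^d$ yields $\psi F'_i \subseteq F'_{i+1}$. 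The endpoint cases $i=0$ and $i=d$ are consistent with the conventions $\mathcal V_{-1}=0=\mathcal V_{d+1}$.

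For the direction (i) $\Rightarrow$ (ii), the key observation is that $\mathcal V_i = F_i \cap F'_{d-i}$, and more generally
\begin{align*}
F_{i+1}\cap F'_{d-i+1} \;=\; (\mathcal V_0+\cdots+\mathcal V_{i+1})\cap (\mathcal V_{i-1}+\cdots+\mathcal V_d) \;=\; \mathcal V_{i-1}+\mathcal V_i+\mathcal V_{i+1},
\end{align*}
which follows from the directness of the decomposition $\lbrace \mathcal V_j\rbrace_{j=0}^d$. Take $v\in \mathcal V_i$. Since $v\in F_i$, hypothesis (i) gives $\psi v \in F_{i+1}$; since $v\in F'_{d-i}$, it also gives $\psi v \in F'_{d-i+1}$. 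Intersecting and using the identity above yields $\psi v \in \mathcal V_{i-1}+\mathcal V_i+\mathcal V_{i+1}$, as required. The boundary cases $i=0$ and $i=d$ are handled by the conventions $F_{-1}=0$, $F_{d+1}=V$, and similarly for $F'$, so the argument is uniform.

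I do not expect any real obstacle here; the content is purely the dictionary between decompositions and pairs of opposite flags, which has already been set up in Section~4. The one point to handle carefully is the index bookkeeping at the two endpoints, but this is routine given the conventions $\mathcal V_{-1}=0=\mathcal V_{d+1}$ and the extended definitions of $F_{-1}, F_{d+1}$.
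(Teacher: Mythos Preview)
Your proof is correct and follows essentially the same approach as the paper's: both directions are handled by the same bookkeeping, with (ii)$\Rightarrow$(i) obtained by summing the componentwise inclusions and (i)$\Rightarrow$(ii) obtained by intersecting $F_{i+1}$ with $F'_{d-i+1}$ to recover $\mathcal V_{i-1}+\mathcal V_i+\mathcal V_{i+1}$. The only cosmetic difference is the order in which you present the two implications.
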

\begin{proof} ${\rm (i) \Rightarrow (ii)}$ We have
\begin{align*} 
\psi \mathcal V_i &\subseteq \psi (\mathcal V_0 + \cdots + \mathcal V_i) = \psi F_i \subseteq F_{i+1} = \mathcal V_0 + \cdots +\mathcal V_{i+1}
\end{align*}
and also
\begin{align*}
\psi \mathcal V_i &\subseteq \psi (\mathcal V_i + \cdots + \mathcal V_d) = \psi F'_{d-i}  \subseteq F'_{d-i+1} = \mathcal V_{i-1} + \cdots + \mathcal V_d.
\end{align*}
Therefore
\begin{align*}
\psi \mathcal V_i &\subseteq (\mathcal V_0 + \cdots + \mathcal V_{i+1})\cap (\mathcal V_{i-1} + \cdots + \mathcal V_d) =\mathcal V_{i-1} + \mathcal V_i + \mathcal V_{i+1}.
\end{align*}
${\rm (ii) \Rightarrow (i)}$ For $0 \leq i \leq d-1$ we have
\begin{align*}
\psi F_i = \psi (\mathcal V_0 + \cdots + \mathcal V_i) \subseteq \mathcal V_0 + \cdots + \mathcal V_{i+1} = F_{i+1}
\end{align*}
and also
\begin{align*}
\psi F'_i = \psi (\mathcal V_d + \cdots + \mathcal V_{d-i}) \subseteq \mathcal V_d + \cdots + \mathcal V_{d-i-1} = F'_{i+1}.
\end{align*}
\end{proof}

\begin{lemma} \label{lem:RS} With reference to Assumption \ref{lem:ass}, the following are equivalent for $\psi \in {\rm End}(V)$:
\begin{enumerate}
\item[\rm (i)] $\psi$ raises $\lbrace F_i \rbrace_{i=0}^d$ and stabilizes $\lbrace F'_i \rbrace_{i=0}^d$;
\item[\rm (ii)] $\psi \mathcal V_i \subseteq  \mathcal V_i + \mathcal V_{i+1} $ for $0 \leq i \leq d$.
\end{enumerate}
\end{lemma}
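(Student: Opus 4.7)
The plan is to mimic the proof of Lemma \ref{lem:RR}, modifying only the second of the two flag conditions: now $\psi$ stabilizes $\lbrace F'_i \rbrace_{i=0}^d$ rather than raising it. This will weaken the upper bound on $\psi \mathcal V_i$ from $\mathcal V_{i-1} + \mathcal V_i + \mathcal V_{i+1}$ to $\mathcal V_i + \mathcal V_{i+1}$, matching the statement.

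For (i) $\Rightarrow$ (ii), I would fix $i$ with $0 \leq i \leq d$ and obtain two containments for $\psi \mathcal V_i$ by exploiting  $\mathcal V_i \subseteq F_i$ and $\mathcal V_i \subseteq F'_{d-i}$. The raising hypothesis on $\lbrace F_i \rbrace_{i=0}^d$ yields
\begin{align*}
\psi \mathcal V_i \subseteq \psi F_i \subseteq F_{i+1} = \mathcal V_0 + \cdots + \mathcal V_{i+1},
\end{align*}
while the stabilization hypothesis on $\lbrace F'_i \rbrace_{i=0}^d$ yields
\begin{align*}
\psi \mathcal V_i \subseteq \psi F'_{d-i} \subseteq F'_{d-i} = \mathcal V_i + \cdots + \mathcal V_d.
\end{align*}
Intersecting these two sums of components of the decomposition gives precisely $\mathcal V_i + \mathcal V_{i+1}$, as required.

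For (ii) $\Rightarrow$ (i), I would expand each flag in terms of the decomposition and apply (ii) componentwise. For $0 \leq i \leq d-1$,
\begin{align*}
\psi F_i = \psi(\mathcal V_0 + \cdots + \mathcal V_i) \subseteq \mathcal V_0 + \cdots + \mathcal V_{i+1} = F_{i+1},
\end{align*}
and similarly
\begin{align*}
\psi F'_i = \psi(\mathcal V_d + \cdots + \mathcal V_{d-i}) \subseteq \mathcal V_{d-i} + \cdots + \mathcal V_d = F'_i,
\end{align*}
where the convention $\mathcal V_{d+1}=0$ absorbs the boundary term at $j=d$.

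There is no real obstacle here; the argument is essentially a one-line modification of Lemma \ref{lem:RR}, and the only mild care needed concerns the boundary indices, which are handled by the conventions $\mathcal V_{-1} = \mathcal V_{d+1}=0$ and $F_{-1}=0$, $F'_{-1}=0$ already established.
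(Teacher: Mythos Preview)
Your proof is correct and follows essentially the same approach as the paper's own argument: both directions are handled by the identical containment-and-intersection reasoning you describe, with the same treatment of boundary indices.
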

\begin{proof} ${\rm (i) \Rightarrow (ii)}$ We have
\begin{align*} 
\psi \mathcal V_i &\subseteq \psi (\mathcal V_0 + \cdots + \mathcal V_i) = \psi F_i \subseteq F_{i+1} = \mathcal V_0 + \cdots +\mathcal V_{i+1}
\end{align*}
and also
\begin{align*}
\psi \mathcal V_i &\subseteq \psi (\mathcal V_i + \cdots + \mathcal V_d) = \psi F'_{d-i}  \subseteq F'_{d-i} = \mathcal V_{i} + \cdots + \mathcal V_d.
\end{align*}
Therefore
\begin{align*}
\psi \mathcal V_i &\subseteq (\mathcal V_0 + \cdots + \mathcal V_{i+1})\cap (\mathcal V_{i} + \cdots + \mathcal V_d)  =\mathcal V_i + \mathcal V_{i+1}.
\end{align*}
${\rm (ii) \Rightarrow (i)}$ For $0 \leq i \leq d-1$ we have
\begin{align*}
\psi F_i = \psi (\mathcal V_0 + \cdots + \mathcal V_i) \subseteq \mathcal V_0 + \cdots + \mathcal V_{i+1} = F_{i+1}
\end{align*}
and also
\begin{align*}
\psi F'_i = \psi (\mathcal V_d + \cdots + \mathcal V_{d-i}) \subseteq \mathcal V_d + \cdots + \mathcal V_{d-i} = F'_{i}.
\end{align*}
\end{proof}

\begin{lemma} \label{lem:SR} With reference to Assumption \ref{lem:ass}, the following are equivalent for $\psi \in {\rm End}(V)$:
\begin{enumerate}
\item[\rm (i)] $\psi$ stabilizes  $\lbrace F_i \rbrace_{i=0}^d$ and raises $\lbrace F'_i \rbrace_{i=0}^d$;
\item[\rm (ii)] $\psi \mathcal V_i \subseteq \mathcal V_{i-1} + \mathcal V_i  $ for $0 \leq i \leq d$.
\end{enumerate}
\end{lemma}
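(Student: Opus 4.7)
The proof will be completely parallel to those of Lemma \ref{lem:RR} and Lemma \ref{lem:RS}, just with the roles of the two flags switched relative to Lemma \ref{lem:RS}. The plan is to exploit the translation between the flag conditions and the componentwise conditions, namely: stabilizing $\lbrace F_i\rbrace_{i=0}^d$ means $\psi(\mathcal V_0+\cdots+\mathcal V_i)\subseteq \mathcal V_0+\cdots+\mathcal V_i$, while raising $\lbrace F'_i\rbrace_{i=0}^d$ means $\psi(\mathcal V_d+\cdots+\mathcal V_{d-i})\subseteq \mathcal V_d+\cdots+\mathcal V_{d-i-1}$.

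For $\mathrm{(i)}\Rightarrow\mathrm{(ii)}$, I would fix $i$ with $0 \le i \le d$ and apply the two stated hypotheses to obtain the inclusions
\begin{align*}
\psi \mathcal V_i &\subseteq \psi(\mathcal V_0+\cdots+\mathcal V_i)=\psi F_i \subseteq F_i = \mathcal V_0 + \cdots + \mathcal V_i,\\
\psi \mathcal V_i &\subseteq \psi(\mathcal V_i+\cdots+\mathcal V_d)=\psi F'_{d-i} \subseteq F'_{d-i+1} = \mathcal V_{i-1}+\cdots+\mathcal V_d.
\end{align*}
Intersecting these two containments and using that $\lbrace \mathcal V_i\rbrace_{i=0}^d$ is a direct sum decomposition yields $\psi \mathcal V_i \subseteq \mathcal V_{i-1} + \mathcal V_i$, which is exactly (ii). The boundary cases $i=0$ and $i=d$ are handled by the conventions $\mathcal V_{-1}=0$ and $\mathcal V_{d+1}=0$.

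For $\mathrm{(ii)}\Rightarrow\mathrm{(i)}$, I would verify each flag condition componentwise. Applying $\psi$ term by term to $F_i=\mathcal V_0+\cdots+\mathcal V_i$ and using (ii) gives
\begin{align*}
\psi F_i \subseteq (\mathcal V_{-1}+\mathcal V_0) + \cdots + (\mathcal V_{i-1}+\mathcal V_i) = \mathcal V_0+\cdots+\mathcal V_i = F_i,
\end{align*}
so $\psi$ stabilizes $\lbrace F_i\rbrace_{i=0}^d$. Similarly,
\begin{align*}
\psi F'_i = \psi(\mathcal V_d + \cdots + \mathcal V_{d-i}) \subseteq \mathcal V_d + \cdots + \mathcal V_{d-i-1} = F'_{i+1}
\end{align*}
for $0 \le i \le d-1$, so $\psi$ raises $\lbrace F'_i\rbrace_{i=0}^d$.

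There is no real obstacle here; the argument is a direct transcription of the proofs of Lemmas \ref{lem:RR} and \ref{lem:RS} with the two flag conditions swapped. The only thing to be careful about is matching indices correctly when translating from $F'_{d-i}$ back into $\mathcal V_j$'s, and ensuring the intersection step in $\mathrm{(i)}\Rightarrow\mathrm{(ii)}$ is justified by the directness of the decomposition $V=\bigoplus_{i=0}^d \mathcal V_i$.
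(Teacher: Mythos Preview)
Your proof is correct. The paper's proof is more economical: it simply says ``Apply Lemma~\ref{lem:RS} to the decomposition $\lbrace \mathcal V_{d-i}\rbrace_{i=0}^d$.'' Under that inversion the roles of $\lbrace F_i\rbrace$ and $\lbrace F'_i\rbrace$ swap, so Lemma~\ref{lem:RS} yields exactly the statement of Lemma~\ref{lem:SR}. Your direct argument is precisely what one obtains by unpacking that reduction, so the two approaches are essentially the same; the paper just exploits the symmetry to avoid repeating the computation.
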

\begin{proof} Apply Lemma \ref{lem:RS}
to the decomposition $\lbrace \mathcal V_{d-i} \rbrace_{i=0}^d$.
\end{proof}

\begin{lemma}\label{lem:SS}  With reference to Assumption \ref{lem:ass}, the following are equivalent for $\psi \in {\rm End}(V)$:
\begin{enumerate}
\item[\rm (i)] $\psi$ stabilizes $\lbrace F_i \rbrace_{i=0}^d$ and $\lbrace F'_i \rbrace_{i=0}^d$;
\item[\rm (ii)] $\psi \mathcal V_i \subseteq \mathcal V_i$ for $0 \leq i \leq d$.
\end{enumerate}
\end{lemma}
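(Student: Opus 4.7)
The plan is to mimic the proofs of Lemmas \ref{lem:RR}, \ref{lem:RS}, \ref{lem:SR}, since Lemma \ref{lem:SS} is the fourth entry in the same $2\times 2$ table of cases (stabilize/raise for each flag). The proof will be a routine double implication using the same two facts exploited throughout: that $\mathcal V_i \subseteq F_i$ and $\mathcal V_i \subseteq F'_{d-i}$, and that $F_i \cap F'_{d-i} = \mathcal V_i$ (this last identity is the definition of opposite flags inducing the decomposition, applied in Assumption \ref{lem:ass}).

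For the forward direction (i) $\Rightarrow$ (ii), I would fix $i$ with $0 \le i \le d$, and write
\begin{align*}
\psi \mathcal V_i \subseteq \psi F_i \subseteq F_i = \mathcal V_0 + \cdots + \mathcal V_i,
\end{align*}
using that $\psi$ stabilizes $\lbrace F_i\rbrace_{i=0}^d$, and similarly
\begin{align*}
\psi \mathcal V_i \subseteq \psi F'_{d-i} \subseteq F'_{d-i} = \mathcal V_i + \cdots + \mathcal V_d,
\end{align*}
using that $\psi$ stabilizes $\lbrace F'_i\rbrace_{i=0}^d$. Intersecting the two gives $\psi \mathcal V_i \subseteq \mathcal V_i$, since the directness of the decomposition $\lbrace \mathcal V_i\rbrace_{i=0}^d$ forces $(\mathcal V_0 + \cdots + \mathcal V_i) \cap (\mathcal V_i + \cdots + \mathcal V_d) = \mathcal V_i$.

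For the reverse direction (ii) $\Rightarrow$ (i), I would apply $\psi$ to each summand of $F_i$ and $F'_i$: for $0 \le i \le d$,
\begin{align*}
\psi F_i = \psi(\mathcal V_0 + \cdots + \mathcal V_i) \subseteq \mathcal V_0 + \cdots + \mathcal V_i = F_i,
\end{align*}
and
\begin{align*}
\psi F'_i = \psi(\mathcal V_d + \cdots + \mathcal V_{d-i}) \subseteq \mathcal V_d + \cdots + \mathcal V_{d-i} = F'_i.
\end{align*}

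There is no real obstacle: the argument is entirely parallel to the three preceding lemmas and uses only the bookkeeping of the opposite-flag/decomposition dictionary set up in Assumption \ref{lem:ass}. The only slightly nontrivial point is the intersection identity in the forward direction, but that is immediate from the directness of the sum $V = \mathcal V_0 \oplus \cdots \oplus \mathcal V_d$.
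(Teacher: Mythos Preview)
Your proposal is correct and follows essentially the same approach as the paper: both directions are proved exactly as you outline, using $\psi F_i \subseteq F_i$, $\psi F'_{d-i} \subseteq F'_{d-i}$, and the intersection $(\mathcal V_0+\cdots+\mathcal V_i)\cap(\mathcal V_i+\cdots+\mathcal V_d)=\mathcal V_i$ for the forward direction, and applying $\psi$ to each summand of $F_i$ and $F'_i$ for the reverse.
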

\begin{proof} ${\rm (i) \Rightarrow (ii)}$ We have
\begin{align*} 
\psi \mathcal V_i &\subseteq \psi (\mathcal V_0 + \cdots + \mathcal V_i) = \psi F_i \subseteq F_{i} = \mathcal V_0 + \cdots +\mathcal V_{i}
\end{align*}
and also
\begin{align*}
\psi \mathcal V_i &\subseteq \psi (\mathcal V_i + \cdots + \mathcal V_d) = \psi F'_{d-i}  \subseteq F'_{d-i} = \mathcal V_{i} + \cdots + \mathcal V_d.
\end{align*}
Therefore
\begin{align*}
\psi \mathcal V_i &\subseteq (\mathcal V_0 + \cdots + \mathcal V_{i})\cap (\mathcal V_{i} + \cdots + \mathcal V_d) =\mathcal V_i .
\end{align*}
${\rm (ii) \Rightarrow (i)}$ For $0 \leq i \leq d-1$ we have
\begin{align*}
\psi F_i = \psi (\mathcal V_0 + \cdots + \mathcal V_i) \subseteq \mathcal V_0 + \cdots + \mathcal V_{i} = F_{i}
\end{align*}
and also
\begin{align*}
\psi F'_i = \psi (\mathcal V_d + \cdots + \mathcal V_{d-i}) \subseteq \mathcal V_d + \cdots + \mathcal V_{d-i} = F'_{i}.
\end{align*}
\end{proof}

\section{The tridiagonal relations}
\noindent  We continue to discuss the nontrivial TD system $ \Phi=(A;\lbrace E_i\rbrace_{i=0}^d;A^*;\lbrace E^*_i\rbrace_{i=0}^d)$ on $V$. In this section we recall some relations satisfied by $A, A^*$.

\begin{lemma} {\rm (See \cite[Theorem~10.1]{TD00}.)}
\label{tdptheorem} 
There exists a sequence of scalars
$\beta,\gamma,\gamma^*,\varrho,\varrho^*$ in $\mathbb F$ such
that both
\begin{align}
\label{eq:TD1}
0 &= \lbrack A, A^2 A^*-\beta A A^* A+A^* A^2-\gamma\left(
A A^*+A^* A\right)-\varrho A^* \rbrack,
\\
\label{eq:TD2}
 0&=\lbrack A^*, A^{*2} A-\beta A^*AA^*
+AA^{*2}-\gamma^* \left(A^*A+AA^*\right)
-\varrho^* A\rbrack.
\end{align}
The sequence $\beta,\gamma,\gamma^*,\varrho,\varrho^*$ is unique if $d\geq 3$.
\end{lemma}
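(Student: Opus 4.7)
The strategy is to conjugate both commutators by the spectral idempotents of $A$ (respectively $A^*$), thereby reducing each operator identity to a polynomial condition on the eigenvalue sequence, and then exploit the fine structure of the split decomposition $\{U_i\}_{i=0}^d$ to produce a common parameter $\beta$ that makes both conditions solvable. First I would carry out the reduction. Setting $Y = A^2A^* - \beta AA^*A + A^*A^2 - \gamma(AA^*+A^*A) - \varrho A^*$, each summand of $Y$ contains exactly one factor of $A^*$, so the identities $AE_i = \theta_iE_i = E_iA$ give
\begin{align*}
E_iYE_j = p_\beta(\theta_i,\theta_j)\, E_iA^*E_j, \qquad p_\beta(x,y) := x^2 - \beta xy + y^2 - \gamma(x+y) - \varrho.
\end{align*}
Since $p_\beta$ is symmetric, $E_i[A,Y]E_j = (\theta_i-\theta_j)\, p_\beta(\theta_i,\theta_j)\,E_iA^*E_j$, which vanishes automatically for $|i-j|>1$ by Lemma \ref{lem:tpr} and for $i=j$ by the prefactor $\theta_i-\theta_j$. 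Hence \eqref{eq:TD1} is equivalent to the $d$ polynomial conditions $p_\beta(\theta_i,\theta_{i+1})=0$ for $0\leq i\leq d-1$. An identical calculation applied to $\Phi^*$ shows that \eqref{eq:TD2} is equivalent to $p^*_\beta(\theta^*_i,\theta^*_{i+1})=0$ for $0\leq i\leq d-1$, where $p^*_\beta(x,y) := x^2 - \beta xy + y^2 - \gamma^*(x+y) - \varrho^*$.

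The principal obstacle is producing a single $\beta$ that renders both families simultaneously solvable. The plan is to establish the Terwilliger recurrence: there exists $\beta \in \mathbb F$ such that both $\theta_{i-1} - \beta\theta_i + \theta_{i+1}$ and $\theta^*_{i-1} - \beta\theta^*_i + \theta^*_{i+1}$ are independent of $i$ for $1 \leq i \leq d-1$. To derive this I would work inside the components $U_i$ of the split decomposition \eqref{eq:abbrev}. By Lemma \ref{lem:RL}, $A - \theta_{d-i}I$ sends $U_i$ into $U_{i+1}$ and $A^* - \theta^*_iI$ sends $U_i$ into $U_{i-1}$. Pushing a generic vector of a fixed $U_i$ up and down the chain $U_0, U_1, \ldots, U_d$ using various words in $A$ and $A^*$, and reading off which subspace each image lies in, one extracts scalar identities forced by the TD-pair axioms; a careful combination of these identities yields the two recurrences with a common ratio $\beta$. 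This linkage through a shared $\beta$ is the crux of the argument and the main technical challenge. Granted the common $\beta$, the recurrence for $\theta_i$ implies that the points $(\theta_i + \theta_{i+1},\ \theta_i^2 - \beta\theta_i\theta_{i+1} + \theta_{i+1}^2)$ lie on a common affine line in $\mathbb F^2$, whose slope and intercept one reads off as $\gamma$ and $\varrho$; the starred version produces $\gamma^*, \varrho^*$, completing the existence argument.

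For uniqueness when $d \geq 3$, note that $\{\theta_i\}_{i=0}^d$ has at least four mutually distinct entries, so $p_\beta(\theta_i,\theta_{i+1}) = 0$ for $0 \leq i \leq d-1$ furnishes at least three linear equations in the unknowns $\beta, \gamma, \varrho$, viewed via the coefficient triples $(\theta_i\theta_{i+1},\,\theta_i+\theta_{i+1},\,1)$. Distinctness of the $\theta_i$ makes the corresponding $3\times 3$ coefficient matrix nonsingular, forcing $\beta, \gamma, \varrho$ to be uniquely determined; the analogous argument applied to $\{\theta^*_i\}_{i=0}^d$ pins down $\gamma^*, \varrho^*$.
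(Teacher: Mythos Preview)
The paper does not prove this lemma; it merely cites \cite[Theorem~10.1]{TD00}. Your reduction via the spectral idempotents is correct and is exactly the standard route: it recovers the equivalence recorded as Lemma~\ref{eq:lastchancedolangradyS992} in the paper, so \eqref{eq:TD1} holds if and only if $p_\beta(\theta_i,\theta_{i+1})=0$ for $0\le i\le d-1$, and similarly for \eqref{eq:TD2}.

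The genuine gap is the paragraph on the common $\beta$. You correctly identify this as the crux, but ``pushing a generic vector of a fixed $U_i$ up and down the chain \ldots\ a careful combination of these identities yields the two recurrences'' is not a proof. The actual argument in \cite{TD00} requires a specific computation: one must exhibit, for $2\le i\le d-1$, an explicit word in $A,A^*$ whose action on a nonzero element of $U_i$ (or an equally concrete gadget) forces
\[
\frac{\theta_{i-2}-\theta_{i+1}}{\theta_{i-1}-\theta_i}
=\frac{\theta^*_{i-2}-\theta^*_{i+1}}{\theta^*_{i-1}-\theta^*_i},
\]
which is what links the two sequences through a single $\beta$. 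Nothing in Lemma~\ref{lem:RL} alone produces this identity; you need to name the word and do the bookkeeping. Without that, the existence half is unproven. A smaller point: in your uniqueness argument, the assertion that ``distinctness of the $\theta_i$ makes the $3\times 3$ matrix nonsingular'' is true but not immediate---one way to see it is to note that if two parameter triples both solved $p_\beta(\theta_i,\theta_{i+1})=0$ for $i=0,1,2$, their difference would force $(\theta_i-a)(\theta_{i+1}-a)$ to be constant in $i$, whence $\theta_0=\theta_2$, a contradiction.
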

\noindent The relations \eqref{eq:TD1}, \eqref{eq:TD2}  are called the
{\it tridiagonal relations}. 
\medskip

\begin{lemma} {\rm (See  \cite[Theorem~4.3]{qSerre}.)}
\label{eq:lastchancedolangradyS992}
For the TD system $\Phi$ and scalars 
 $\beta, \gamma, \gamma^*,
\varrho, \varrho^*$ in $\mathbb F$,  these scalars satisfy \eqref{eq:TD1}, \eqref{eq:TD2} if and only if the following {\rm (i)--(v)} hold:
\begin{enumerate}
\item[\rm (i)] 
the expressions
\begin{align*}
\frac{\theta_{i-2}-\theta_{i+1}}{\theta_{i-1}-\theta_i},\qquad \qquad  
 \frac{\theta^*_{i-2}-\theta^*_{i+1}}{\theta^*_{i-1}-\theta^*_i} 
\end{align*} 
 are both equal to $\beta +1$ for $2\leq i \leq d-1$;
 \item[\rm (ii)]   
$\gamma = \theta_{i-1}-\beta \theta_i + \theta_{i+1} $ $(1 \leq i \leq d-1)$;
\item[\rm (iii)] 
$\gamma^* = \theta^*_{i-1}-\beta \theta^*_i + \theta^*_{i+1}$ $(1 \leq i \leq d-1)$;
\item[\rm (iv)] 
$\varrho = \theta^2_{i-1}-\beta \theta_{i-1}\theta_i+\theta_i^2-\gamma (\theta_{i-1}+\theta_i)$ $(1 \leq i \leq d)$;
\item[\rm (v)] 
$\varrho^*= \theta^{*2}_{i-1}-\beta \theta^*_{i-1}\theta^*_i+\theta_i^{*2}-
\gamma^* (\theta^*_{i-1}+\theta^*_i)$ $(1 \leq i \leq d)$.
\end{enumerate}
\end{lemma}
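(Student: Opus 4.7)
The plan is to sandwich the two tridiagonal relations with the primitive idempotents of $A$ and $A^*$, reduce each to a scalar identity on the eigenvalue and dual eigenvalue sequences, and then show that (i)--(v) are equivalent to this pair of scalar identities. Let $q$ denote the inner bracket of \eqref{eq:TD1}, so that \eqref{eq:TD1} asserts $[A,q]=0$. Since $I=\sum_i E_i$ and $AE_i=\theta_i E_i=E_iA$, for every $i,j$ we have $E_i[A,q]E_j=(\theta_i-\theta_j)E_iqE_j$, so \eqref{eq:TD1} is equivalent to $E_iqE_j=0$ for all $i\neq j$. Each monomial in $q$ has the form $A^kA^*A^\ell$ with $k+\ell\leq 2$, so pulling the $A$'s onto the idempotents would yield
\begin{align*}
E_iqE_j = \bigl(\theta_i^2-\beta\theta_i\theta_j+\theta_j^2-\gamma(\theta_i+\theta_j)-\varrho\bigr)\,E_iA^*E_j.
\end{align*}
By Lemma \ref{lem:tpr}, $E_iA^*E_j$ vanishes for $|i-j|>1$ and is nonzero for $|i-j|=1$, so \eqref{eq:TD1} holds iff the scalar coefficient vanishes whenever $|i-j|=1$, i.e.\ $\theta_{i-1}^2-\beta\theta_{i-1}\theta_i+\theta_i^2-\gamma(\theta_{i-1}+\theta_i)-\varrho=0$ for $1\leq i\leq d$. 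This is exactly condition (iv). An identical argument for \eqref{eq:TD2}, sandwiched between $E^*_i$ and $E^*_j$ and using the nonvanishing half of Lemma \ref{lem:tpr} for $E^*_iAE^*_j$, shows \eqref{eq:TD2} $\Leftrightarrow$ (v).

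The remaining task is to extract (i), (ii), (iii) from (iv), (v). Subtracting the $i$- and $(i{+}1)$-instances of (iv) gives
\begin{align*}
(\theta_{i-1}-\theta_{i+1})\bigl(\theta_{i-1}+\theta_{i+1}-\beta\theta_i-\gamma\bigr)=0 \qquad (1\leq i\leq d-1).
\end{align*}
Since $\{\theta_i\}_{i=0}^d$ are mutually distinct (the remark just after Definition \ref{def:TDsystem}, combined with the commentary following the definition of eigenvalue sequence), the first factor is nonzero and one obtains (ii). Subtracting consecutive instances of (ii) and again invoking distinctness of $\theta_{i-1},\theta_i$ yields $\theta_{i-2}-\theta_{i+1}=(\beta+1)(\theta_{i-1}-\theta_i)$ for $2\leq i\leq d-1$, which is the unstarred half of (i). The starred half of (i) and condition (iii) follow by the same subtraction argument applied to (v). Since the pair (iv), (v) is, by the first paragraph, equivalent to the pair \eqref{eq:TD1}, \eqref{eq:TD2}, and (i), (ii), (iii) are consequences of (iv), (v), both directions of the biconditional follow.

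The main obstacle I expect is bookkeeping rather than a conceptual difficulty: the factorization $E_iqE_j=(\cdots)E_iA^*E_j$ involves tracking five quadratic monomials and their common factor $E_iA^*E_j$, and one must check that the index ranges in (i)--(v) match what the idempotent analysis produces. In particular, (i) is vacuous for $d\leq 2$, consistent with the fact that the subtraction-of-consecutive-instances argument only yields information for $i$ in the ranges $1\leq i\leq d-1$ (for (ii), (iii)) and $2\leq i\leq d-1$ (for (i)). No input beyond Lemma \ref{lem:tpr} and the distinctness of the (dual) eigenvalues is required.
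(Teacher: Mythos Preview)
Your proof is correct. The paper does not supply its own proof of this lemma; it merely cites \cite[Theorem~4.3]{qSerre}. Your argument---sandwiching each tridiagonal relation between primitive idempotents, using Lemma~\ref{lem:tpr} to reduce to the scalar identity (iv) (resp.\ (v)), and then deriving (i)--(iii) by successive differencing---is exactly the standard route and matches what one finds in the cited reference. One cosmetic remark: you use $q$ to denote the inner bracket of \eqref{eq:TD1}, which clashes with the quantum parameter $q$ used throughout the paper; in the written version you should pick a different letter.
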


\noindent Shortly we will impose a restriction on the eigenvalues and dual eigenvalues of $\Phi$. In order to motivate this
restriction, we consider a certain algebra $U^+_q$. This algebra is discussed in the next section. 

\section{The algebra $U^+_q$ and its alternating elements}

\noindent{From now until the end of Section 10, we fix a nonzero scalar $q$ in the algebraic closure of $\mathbb F$, 
such that (i) $q$  is not a root of unity; (ii) $q^2 \in \mathbb F$. We abbreviate $b=q^2$. 
\noindent For elements $X, Y$ in any algebra, recall the notation
\begin{align*}
\lbrack X, Y \rbrack=  XY- YX, \qquad \qquad
\lbrack X, Y \rbrack_b= b XY- YX.
\end{align*}
\noindent Note that 
\begin{align*}
\lbrack X, \lbrack X, &\lbrack X, Y\rbrack_b \rbrack_{b^{-1}} \rbrack\\
&= 
X^3Y-(b+b^{-1}+1) X^2YX+ 
(b+b^{-1}+1)XYX^2 -YX^3.
\end{align*}
We will refer to the $q$-deformed enveloping algebra $U_q(\widehat{\mathfrak{sl}}_2)$}; see for example \cite{charp}.
\begin{definition}
\label{def:nposp}
\rm 
(See \cite[Corollary~3.2.6]{lusztig}.)
Define the algebra $U^+_q$ 
by generators $W_0, W_1$ and relations
\begin{align}
&
\lbrack W_0, \lbrack W_0, \lbrack W_0, W_1\rbrack_b \rbrack_{b^{-1}} \rbrack=0,
\qquad \qquad 
\lbrack W_1, \lbrack W_1, \lbrack W_1, W_0\rbrack_b \rbrack_{b^{-1}}
\rbrack=0.
\label{eq:nqSerre1}
\end{align}
\noindent We call $U^+_q$ the {\it positive part of 
$U_q(\widehat{\mathfrak{sl}}_2)$}.
The relations (\ref{eq:nqSerre1})
are called the {\it $q$-Serre relations}.
\end{definition}

\noindent
We will be discussing automorphisms
and antiautomorphisms.
  For an algebra $\mathcal A$,
an {\it automorphism} of $\mathcal A$ is 
an algebra isomorphism $\mathcal A \to \mathcal A$.
The {\it opposite algebra} $\mathcal A^{\rm opp}$ consists
of the vector space $\mathcal A$ and multiplication map
$\mathcal A \times \mathcal A \to \mathcal A$, $(a, b) \mapsto ba$.
An {\it antiautomorphism} of $\mathcal A$ 
is an
algebra isomorphism $\mathcal A \to \mathcal A^{\rm opp}$.
\begin{lemma}
\label{lem:nAAut} {\rm (See \cite[Lemma~2.3]{alternating}.)}
There exists an  automorphism $\sigma$ of
$U^+_q$ that swaps $W_0, W_1$. 
There exists an  antiautomorphism
$\dagger $ of $U^+_q$ that fixes each of $W_0$, $W_1$.
\end{lemma}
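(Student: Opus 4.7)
The plan is to build both maps via the universal property of the presentation of $U^+_q$ given in Definition \ref{def:nposp}: to define a map on $U^+_q$ (as algebra or as antialgebra), it is enough to specify its action on $W_0, W_1$ and verify that the two $q$-Serre relations \eqref{eq:nqSerre1} are sent into the defining ideal.

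For $\sigma$, I would define $\sigma$ on the free algebra generated by $W_0, W_1$ by $W_0 \mapsto W_1$ and $W_1 \mapsto W_0$, and check that the images of the defining relations still vanish in $U^+_q$. This is immediate by inspection of \eqref{eq:nqSerre1}: the two $q$-Serre relations are interchanged by the swap $W_0 \leftrightrightarrow W_1$, so $\sigma$ sends each relation to the other, which is zero in $U^+_q$. Hence $\sigma$ descends to an algebra homomorphism $U^+_q \to U^+_q$. Since $\sigma^2$ agrees with the identity on the generators $W_0, W_1$, we have $\sigma^2=\mathrm{id}$ on all of $U^+_q$, so $\sigma$ is an involutive automorphism.

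For $\dagger$, I would use the natural antiautomorphism of the free algebra $\mathbb F\langle W_0, W_1\rangle$ that fixes $W_0, W_1$ and reverses the order of letters in each monomial, and check that it preserves the ideal generated by the $q$-Serre relations. Expanding the first relation in \eqref{eq:nqSerre1} gives
\begin{align*}
W_0^3W_1-(b+b^{-1}+1)W_0^2W_1W_0+(b+b^{-1}+1)W_0W_1W_0^2-W_1W_0^3.
\end{align*}
Applying $\dagger$ reverses each monomial, producing
\begin{align*}
W_1W_0^3-(b+b^{-1}+1)W_0W_1W_0^2+(b+b^{-1}+1)W_0^2W_1W_0-W_0^3W_1,
\end{align*}
which is the negative of the original expression, hence lies in the ideal. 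Exactly the same observation applies to the second $q$-Serre relation (with the roles of $W_0, W_1$ reversed). Therefore $\dagger$ descends to an antiautomorphism of $U^+_q$ fixing $W_0$ and $W_1$, and since it is involutive on the generators it is involutive on $U^+_q$.

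I do not expect any substantive obstacle; the only point requiring attention is the palindrome-with-a-sign observation for the second map, which is a routine check at the level of the expanded cubic relations. Everything else is the standard universal-property argument used to manufacture (anti)automorphisms of algebras given by generators and relations.
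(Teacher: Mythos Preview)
Your argument is correct. The paper itself does not supply a proof of this lemma; it simply cites \cite[Lemma~2.3]{alternating}. Your proof via the universal property of the presentation is exactly the standard one: the swap $W_0\leftrightarrow W_1$ interchanges the two $q$-Serre relations, and the order-reversal map sends each expanded cubic relation to its negative (a palindrome-with-sign check), so both maps descend to $U^+_q$ and are involutive because they are involutive on the generators.
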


\begin{lemma} \label{lem:ms} For nonzero  $\lambda_0, \lambda_1 \in \mathbb F$ there exists an automorphism of $U^+_q$ that sends 
$W_0 \mapsto \lambda_0 W_0$ and $W_1 \mapsto \lambda_1 W_1$.
\end{lemma}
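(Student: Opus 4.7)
The plan is to invoke the universal property of the presentation of $U^+_q$ by generators and relations (Definition \ref{def:nposp}). First I would observe that the $q$-Serre relations are bi-homogeneous in $W_0, W_1$: every monomial appearing in the expansion
\[
\lbrack W_0, \lbrack W_0, \lbrack W_0, W_1 \rbrack_b \rbrack_{b^{-1}} \rbrack
= W_0^3 W_1 - (b+b^{-1}+1) W_0^2 W_1 W_0 + (b+b^{-1}+1) W_0 W_1 W_0^2 - W_1 W_0^3
\]
has degree $3$ in $W_0$ and degree $1$ in $W_1$, and symmetrically for the second relation.

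Next, I would consider the assignment $W_0 \mapsto \lambda_0 W_0$, $W_1 \mapsto \lambda_1 W_1$ and check that it respects these relations: substituting the scaled generators into the first relation yields $\lambda_0^3 \lambda_1$ times the original, and into the second relation yields $\lambda_0 \lambda_1^3$ times the original. Both are zero in $U^+_q$, so the universal property gives a well-defined algebra homomorphism $\varphi : U^+_q \to U^+_q$ with $\varphi(W_0) = \lambda_0 W_0$ and $\varphi(W_1) = \lambda_1 W_1$.

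Finally, to promote $\varphi$ from a homomorphism to an automorphism, I would apply the same construction with $\lambda_0^{-1}$ and $\lambda_1^{-1}$ in place of $\lambda_0$ and $\lambda_1$ (using that both are nonzero); this produces a homomorphism $\psi : U^+_q \to U^+_q$. Since $\varphi \psi$ and $\psi \varphi$ fix the generators $W_0, W_1$, they agree with the identity on all of $U^+_q$, so $\psi = \varphi^{-1}$ and $\varphi$ is an automorphism. There is no real obstacle in this argument; the only substantive point is the bi-homogeneity observation that allows the relations to be scaled out.
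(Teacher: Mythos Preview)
Your proof is correct and follows exactly the approach indicated in the paper: the paper's proof consists of the single sentence ``The $q$-Serre relations are homogeneous in $W_0$ and $W_1$,'' and you have carefully spelled out what this entails, including the construction of the inverse using $\lambda_0^{-1}, \lambda_1^{-1}$.
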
 
\begin{proof} The $q$-Serre relations are homogeneous in $W_0$ and $W_1$.
\end{proof}

\noindent The alternating elements of
$U^+_q$ were introduced in \cite{alternating}. There are four kinds of alternating elements, denoted 
\begin{align}
\label{eq:nWWGGn}
\lbrace  W_{-k}\rbrace_{k \in \mathbb N}, \quad 
\lbrace  W_{k+1}\rbrace_{k \in \mathbb N}, \quad
\lbrace  G_{k+1}\rbrace_{k\in \mathbb N}, \quad
\lbrace {\tilde G}_{k+1}\rbrace_{k \in \mathbb N}.
\end{align}

\noindent 
These elements satisfy many relations; see
\cite[Proposition~5.7]{alternating},
\cite[Proposition~5.10]{alternating},
\cite[Proposition~5.11]{alternating},
\cite[Proposition~6.3]{alternating},
\cite[Proposition~8.1]{alternating}. Some of these relations are listed below.

\begin{lemma} 
\label{lem:nrel1}
{\rm (See \cite[Proposition~5.7]{alternating}.)}
For $k \in \mathbb N$
the following relations hold in $U^+_q$:
\begin{align*}
&
 \lbrack  W_0,  W_{k+1}\rbrack= 
\lbrack  W_{-k},  W_{1}\rbrack=
(1-b^{-1})({\tilde G}_{k+1} -  G_{k+1}),
\\
&
\lbrack  W_0,  G_{k+1}\rbrack_b= 
\lbrack {{\tilde G}}_{k+1},  W_{0}\rbrack_b= 
 (b-1)W_{-k-1},
\\
&
\lbrack G_{k+1},  W_{1}\rbrack_b= 
\lbrack  W_{1}, { {\tilde G}}_{k+1}\rbrack_b= 
(b-1) W_{k+2}.
\end{align*}
\end{lemma}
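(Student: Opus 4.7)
The plan is to split the six equalities into two groups of three. First, rewriting the recursions displayed just before Lemma \ref{lem:nrel1} with $n = k+1$ already delivers three of them as essential tautologies: the formula $\tilde G_n = G_n + \frac{[W_0,W_n]}{1-b^{-1}}$ rearranges to $[W_0, W_{k+1}] = (1-b^{-1})(\tilde G_{k+1}-G_{k+1})$; the formula $W_{-n} = \frac{[W_0,G_n]_b}{b-1}$ gives $[W_0,G_{k+1}]_b = (b-1)W_{-k-1}$; and the formula $W_{n+1} = \frac{[G_n,W_1]_b}{b-1}$ gives $[G_{k+1},W_1]_b = (b-1)W_{k+2}$.

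To obtain the remaining three equalities, I would invoke the automorphism $\sigma$ of Lemma \ref{lem:nAAut}, using the auxiliary fact that $\sigma(W_{-k}) = W_{k+1}$ and $\sigma(G_{k+1}) = \tilde G_{k+1}$ for every $k \in \mathbb{N}$; since $\sigma$ is an involution, this also gives $\sigma(W_{k+1}) = W_{-k}$ and $\sigma(\tilde G_{k+1}) = G_{k+1}$. Granted this, applying $\sigma$ (which satisfies $\sigma([X,Y]_b)=[\sigma(X),\sigma(Y)]_b$) to the three identities above produces $[W_1, W_{-k}] = (1-b^{-1})(G_{k+1}-\tilde G_{k+1})$, $[W_1, \tilde G_{k+1}]_b = (b-1)W_{k+2}$, and $[\tilde G_{k+1}, W_0]_b = (b-1)W_{-k-1}$. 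After noting $[W_1, W_{-k}] = -[W_{-k}, W_1]$, these are exactly the three remaining claims of the lemma.

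The main obstacle is proving that $\sigma$ swaps the alternating elements in this way. The natural plan is strong induction on the order $W_0, W_1, G_1, \tilde G_1, W_{-1}, W_2, G_2, \ldots$ in which the elements are defined: the base case is the definition of $\sigma$, and in the inductive step one applies $\sigma$ to the recursion defining the next element and checks that the result matches the intended swapped element. For $\sigma(G_n) = \tilde G_n$ in particular, the right-hand side of the recursion for $G_n$, transported by $\sigma$ (which by inductive hypothesis swaps $G_k \leftrightarrow \tilde G_k$ and $W_{-j} \leftrightarrow W_{j+1}$ for $j, k < n$), must be identified with $G_n + \frac{[W_0,W_n]}{1-b^{-1}}$. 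The reindexing of the two sums in the recursion is the delicate step, since generators from different families do not commute. A convenient shortcut is to pass through the injective Rosso homomorphism $U^+_q \hookrightarrow F$ into the $q$-shuffle algebra, where $\sigma$ manifestly corresponds to the letter swap $x \leftrightarrow y$: this visibly sends the alternating word image $xyx\cdots yx$ of $W_{-k}$ to the image $yxy\cdots xy$ of $W_{k+1}$, and the image $(yx)^{k+1}$ of $G_{k+1}$ to the image $(xy)^{k+1}$ of $\tilde G_{k+1}$. Injectivity of the Rosso map then pins down the action of $\sigma$ and completes the proof.
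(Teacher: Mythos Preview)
The paper does not prove this lemma; it simply cites \cite[Proposition~5.7]{alternating}. Your argument is correct and supplies an actual derivation from other facts quoted in Section~8.

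A small locational correction: the recursions you invoke are not ``just before'' Lemma~\ref{lem:nrel1} in Section~8; they appear afterward, in Lemma~\ref{lem:nrecgen} (and also in the Introduction). More substantively, the logical order in \cite{alternating} is the reverse of yours. There the alternating elements are \emph{defined} as the preimages of the alternating words under the Rosso embedding; the $\sigma$-action (Lemma~\ref{lem:nsigSact}) and the relations of Lemma~\ref{lem:nrel1} are then read off directly in the $q$-shuffle algebra, and the recursion (Lemma~\ref{lem:nrecgen}) is derived last. Your route---take the recursion as the definition (as the Introduction of the present paper does), obtain three of the six identities by rearrangement, and transport them by $\sigma$---is equally valid, and you correctly flag that establishing the $\sigma$-action on the alternating elements by pure induction on the recursion is awkward, while the Rosso embedding makes it transparent. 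Either way you end up leaning on the shuffle-algebra picture at the key point, so the two approaches converge there.
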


\begin{lemma}
\label{lem:nrel2}
{\rm (See \cite[Proposition~5.10]{alternating}.)}
For $k, \ell \in \mathbb N$ the
following relations hold in $U^+_q$:
\begin{align*}
&
\lbrack  W_{-k},  W_{-\ell}\rbrack=0,  \qquad 
\lbrack  W_{k+1},  W_{\ell+1}\rbrack= 0,
\\
&
\lbrack  G_{k+1},  G_{\ell+1}\rbrack=0,
\qquad 
\lbrack {\tilde G}_{k+1},  {\tilde G}_{\ell+1}\rbrack= 0.
\end{align*}
\end{lemma}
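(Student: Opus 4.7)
The plan is to invoke the Rosso embedding $\phi: U^+_q \hookrightarrow F$ into the $q$-shuffle algebra described in the introduction. Since $\phi$ is injective, each of the four asserted commutations reduces to an identity $u \star v = v \star u$ in $F$, where $u, v$ are a pair of alternating words drawn from the same one of the four families $\{x(yx)^k\}_{k\in\mathbb N}$, $\{y(xy)^k\}_{k\in\mathbb N}$, $\{(yx)^{k+1}\}_{k\in\mathbb N}$, $\{(xy)^{k+1}\}_{k\in\mathbb N}$. The standard recursion $au \star bv = a(u \star bv) + q^{(a,b)}\, b(au \star v)$ for letters $a,b$ and words $u,v$ expands each product as a sum over interleavings of the letters of $u$ and $v$, each weighted by $q^{\sum (u_i, v_j)}$ over crossing pairs.

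For each of the four families, I would then establish $u \star v = v \star u$ by constructing a weight-preserving involution on the set of interleavings that exchanges the roles of $u$ and $v$. The rigid alternating structure within each family is what makes this work: consecutive letters of $u$ alternate in exactly the same pattern as those of $v$, so the pattern of $+2$ and $-2$ contributions picked up across any crossing is invariant under swapping the roles of the two words. After setting up the involution, one verifies by a short induction on $\min(|u|,|v|)$ that corresponding terms match.

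The main obstacle is the bookkeeping of $q$-weights through the involution: the crossing statistic must be shown invariant under swapping, and since the first (and last) letters of $u$ and $v$ in a given family coincide, the endpoints of the shuffle require separate treatment to ensure no spurious boundary contributions appear. A conceptually cleaner alternative, which avoids shuffle combinatorics, is to work inside $U^+_q$ by induction on $k+\ell$, using the recursions of Lemma \ref{lem:nrel1} to rewrite $W_{-(k+1)}$, $W_{k+2}$, $G_{k+1}$, $\tilde G_{k+1}$ as commutators of lower-index alternating elements with $W_0$ or $W_1$, and then applying Jacobi-type identities. However, because the four families couple through those recursions, one must run the induction simultaneously on all four commutator identities at each level, which is where the bulk of the work would go in that approach.
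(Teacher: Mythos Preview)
The paper does not prove this lemma; it merely cites \cite[Proposition~5.10]{alternating}, and the proof there does indeed go through the Rosso embedding into the $q$-shuffle algebra. So your overall strategy matches the source.

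That said, the involution you sketch is not right as stated. If you take a fixed interleaving $\sigma$ and simply swap which word is ``first,'' the resulting word is unchanged, but the $q$-weight is not preserved: it becomes $T - w(\sigma)$ where $T = \sum_{i,j} (u_i,v_j)$ is the total bilinear pairing, independent of $\sigma$. So the naive swap is not weight-preserving, and your remark that ``the pattern of $+2$ and $-2$ contributions \ldots\ is invariant under swapping the roles of the two words'' is false at the level of individual interleavings. (Try $u=x$, $v=xyx$: the four interleavings in $u\star v$ have weights $1,q^2,1,q^2$, while the same four interleavings in $v\star u$ have weights $q^2,1,q^2,1$.) A correct bijection does exist---in the example just given it pairs position $k$ with position $k\pm 1$---but constructing it in general and verifying that both the word and the weight match is the entire content of the argument, not a bookkeeping afterthought. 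The proof in \cite{alternating} does not proceed by such a bijection; it uses structural identities for $q$-shuffle products of alternating words established earlier in that paper.

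Your alternative plan---simultaneous induction on $k+\ell$ via the recursions of Lemma~\ref{lem:nrel1} and Jacobi identities---is viable in principle, but note that Lemma~\ref{lem:nrel1} alone does not close the induction: you also need the relations $[W_{-k},G_{\ell+1}]$, $[W_{k+1},G_{\ell+1}]$, etc.\ (these are in \cite[Proposition~5.11]{alternating}), so the induction is over a larger system than the four identities you are trying to prove.
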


\noindent For notational convenience define $G_0=1$
and $\tilde G_0=1$.

\begin{lemma}
\label{prop:nGGWW}
{\rm (See \cite[Proposition~8.1]{alternating}.)}
For $n\geq 1$
the following relation holds in $U^+_q$:
\begin{align}
&
\sum_{k=0}^n  G_{k}  \tilde G_{n-k}b^{-k} 
= 
\sum_{k=0}^{n-1} W_{-k} W_{n-k} b^{-k}.
\label{eq:nGGWW1}
\end{align}
\end{lemma}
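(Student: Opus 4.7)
The plan is to prove the identity by direct induction on $n$, or equivalently, by a simple rearrangement showing that the identity is effectively encoded in the recursive definition of $G_n$. I will proceed as follows.

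First, I would separate out the two extreme terms of the left-hand sum. Using the convention $G_0 = 1 = \tilde G_0$, the $k=0$ term is $\tilde G_n$ and the $k=n$ term is $G_n b^{-n}$. Thus the left-hand side of \eqref{eq:nGGWW1} equals
\begin{align*}
\tilde G_n + G_n b^{-n} + \sum_{k=1}^{n-1} G_k \tilde G_{n-k} b^{-k}.
\end{align*}
Next, I would use the relation $\tilde G_n = G_n + \frac{[W_0, W_n]}{1-b^{-1}}$ from the recursive definitions to rewrite the first two terms as
\begin{align*}
\tilde G_n + G_n b^{-n} = (1+b^{-n}) G_n + \frac{[W_0, W_n]}{1-b^{-1}} = (1+b^{-n}) G_n - \frac{[W_n, W_0]}{1-b^{-1}}.
\end{align*}

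The key step is then to substitute the defining recursion for $G_n$, namely
\begin{align*}
(1+b^{-n}) G_n = \sum_{k=0}^{n-1} W_{-k} W_{n-k} b^{-k} - \sum_{k=1}^{n-1} G_k \tilde G_{n-k} b^{-k} + \frac{[W_n, W_0]}{1-b^{-1}}.
\end{align*}
After this substitution the commutator terms cancel, and the sum $\sum_{k=1}^{n-1} G_k \tilde G_{n-k} b^{-k}$ produced by the recursion cancels with the same sum that remained from splitting off the extreme terms. What is left is precisely $\sum_{k=0}^{n-1} W_{-k} W_{n-k} b^{-k}$, which is the right-hand side of \eqref{eq:nGGWW1}.

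I do not anticipate a serious obstacle here; the identity is essentially a rewriting of the recursion used to define $G_n$. The only small subtlety is bookkeeping with the endpoint conventions $G_0 = \tilde G_0 = 1$ and the sign of $[W_0,W_n]$ versus $[W_n,W_0]$, which must be tracked carefully so that the commutator contributions cancel exactly.
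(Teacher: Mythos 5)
Your algebraic manipulations are correct, and your final observation is the key point: once $\tilde G_n - G_n = \frac{[W_0,W_n]}{1-b^{-1}}$ from \eqref{eq:nsolvGt} is substituted, the identity \eqref{eq:nGGWW1} is literally equivalent to the recursion \eqref{eq:nsolvG}. But that equivalence is precisely why this cannot be accepted as a proof of Lemma \ref{prop:nGGWW} as it stands in the paper. The paper gives no proof here at all; it cites \cite[Proposition~8.1]{alternating}, where the alternating elements are not \emph{defined} by the recursion but by their images $x,\,xyx,\,xyxyx,\ldots$; $y,\,yxy,\ldots$; $yx,\,yxyx,\ldots$; $xy,\,xyxy,\ldots$ under Rosso's embedding of $U^+_q$ into the $q$-shuffle algebra. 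There \eqref{eq:nGGWW1} is verified directly at the shuffle level (Proposition 8.1), and the recursion \eqref{eq:nsolvG}--\eqref{eq:nsolvWp} is then \emph{derived} from it as Proposition 8.2 (reproduced here as Lemma \ref{lem:nrecgen}). You have run that derivation in reverse: you take the recursion as given and unwind it to recover the product identity. This shows the two statements are equivalent modulo \eqref{eq:nsolvGt}, which is a correct and mildly useful observation, but it does not prove either one independently. An actual proof must go back to the defining shuffle-algebra realization (or some other independent description) of the $G_k$ and $\tilde G_k$; deriving Proposition 8.1 from Proposition 8.2 when Proposition 8.2 was itself derived from Proposition 8.1 is circular. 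If, on the other hand, one were to \emph{adopt} Lemma \ref{lem:nrecgen} as the definition of the alternating elements (as the introduction informally suggests), then your computation would indeed establish \eqref{eq:nGGWW1}; you should make that change of viewpoint explicit, because as written the paper treats both lemmas as imported facts rather than one as a definition.
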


\begin{lemma}
\label{lem:nrecgen}
{\rm (See \cite[Proposition~8.2]{alternating}.)}
Using the equations below, the alternating elements in
$U^+_q$
are recursively obtained from $W_0, W_1$ in the following order:
\begin{align*}
W_0, \quad W_1, \quad G_1, \quad \tilde G_1, \quad W_{-1}, \quad W_2, \quad
 G_2, \quad \tilde G_2, \quad W_{-2}, \quad W_3, \quad \ldots
\end{align*}
For $n\geq 1$,
\begin{align}
G_n &= \frac{\sum_{k=0}^{n-1} W_{-k} W_{n-k} b^{-k} 
-
\sum_{k=1}^{n-1} G_k  \tilde G_{n-k}  b^{-k} }{1+b^{-n}}
+ 
\frac{\lbrack W_n,  W_0 \rbrack}{(1+b^{-n})(1-b^{-1})},
\label{eq:nsolvG}
\\
\tilde G_n &= G_n + \frac{\lbrack W_0, W_n\rbrack }{1-b^{-1}},
\label{eq:nsolvGt}
\\
W_{-n} &= \frac{\lbrack W_0, G_n \rbrack_b}{b-1}, 
\label{eq:nsolvWm}
\\
W_{n+1} &= \frac{\lbrack G_n, W_1\rbrack_b}{b-1}. 
\label{eq:nsolvWp}
\end{align}
\end{lemma}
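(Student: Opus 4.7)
The plan is to derive each of the four recursive formulas by solving for the newly defined element in a relation already known from Lemmas \ref{lem:nrel1} and \ref{prop:nGGWW}, and then to verify that the order of computation is consistent (each right-hand side involves only elements that have already been defined).

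The formulas for $\tilde G_n$, $W_{-n}$, $W_{n+1}$ are essentially just restatements of the three pairs of relations in Lemma \ref{lem:nrel1}. Solving
$\lbrack W_0,W_n\rbrack=(1-b^{-1})(\tilde G_n-G_n)$ for $\tilde G_n$ gives \eqref{eq:nsolvGt}; solving $\lbrack W_0,G_n\rbrack_b=(b-1)W_{-n}$ for $W_{-n}$ gives \eqref{eq:nsolvWm}; and solving $\lbrack G_n,W_1\rbrack_b=(b-1)W_{n+1}$ for $W_{n+1}$ gives \eqref{eq:nsolvWp}. The denominators $1-b^{-1}$ and $b-1$ are nonzero since $q$ is not a root of unity, so these formulas are legitimate.

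The formula \eqref{eq:nsolvG} for $G_n$ is obtained by feeding the just-derived expression for $\tilde G_n$ back into the convolution identity of Lemma \ref{prop:nGGWW}. Using $G_0=\tilde G_0=1$, split off the extreme terms of the left side of \eqref{eq:nGGWW1} to write
\begin{align*}
\tilde G_n+G_n b^{-n}+\sum_{k=1}^{n-1}G_k\tilde G_{n-k}b^{-k}=\sum_{k=0}^{n-1}W_{-k}W_{n-k}b^{-k}.
\end{align*}
Substitute $\tilde G_n=G_n+\lbrack W_0,W_n\rbrack/(1-b^{-1})$ and collect the $G_n$ terms with coefficient $1+b^{-n}$, which is nonzero because $b^n=q^{2n}=-1$ would force $q^{4n}=1$, contradicting that $q$ is not a root of unity. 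Solving for $G_n$ and using $-\lbrack W_0,W_n\rbrack=\lbrack W_n,W_0\rbrack$ produces exactly \eqref{eq:nsolvG}.

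Finally, inspect the recursion order $W_0,W_1,G_1,\tilde G_1,W_{-1},W_2,G_2,\tilde G_2,W_{-2},W_3,\ldots$ against each formula. The formula for $G_n$ uses only $W_n$ (immediately preceding $G_n$ in the order), together with $\{W_{-k},W_{n-k},G_k,\tilde G_{n-k}\}$ with indices strictly less than $n$; \eqref{eq:nsolvGt} uses $G_n$ and $W_n$; \eqref{eq:nsolvWm} and \eqref{eq:nsolvWp} use only $G_n$. So each element is determined from strictly earlier ones, and the recursion is well-posed. There is no real obstacle here; the only slightly fiddly step is the algebraic rearrangement yielding \eqref{eq:nsolvG}, in particular tracking the sign flip that converts $-\lbrack W_0,W_n\rbrack$ into $\lbrack W_n,W_0\rbrack$ in the final expression.
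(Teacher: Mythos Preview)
Your proof is correct. The paper does not give its own proof of this lemma; it simply cites \cite[Proposition~8.2]{alternating}. Your derivation---reading \eqref{eq:nsolvGt}, \eqref{eq:nsolvWm}, \eqref{eq:nsolvWp} directly off Lemma~\ref{lem:nrel1}, then isolating the $k=0$ and $k=n$ terms in \eqref{eq:nGGWW1} and substituting \eqref{eq:nsolvGt} to solve for $G_n$---is exactly the natural argument and matches what is done in the cited reference. The nonvanishing of $1+b^{-n}$ and $1-b^{-1}$ and the consistency of the recursion order are handled correctly.
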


\begin{lemma}
\label{lem:nsigSact}
{\rm (See \cite[Lemma~5.3]{alternating}.)}
Consider the maps $\sigma$, $\dagger$  from
Lemma 
\ref{lem:nAAut}.
 For $k \in \mathbb N$,
\begin{enumerate}
\item[\rm(i)]
the map $\sigma $ sends
\begin{align*}
W_{-k} \mapsto W_{k+1}, \qquad \quad
W_{k+1} \mapsto W_{-k}, \qquad \quad
G_{k} \mapsto \tilde G_{k}, \qquad \quad
\tilde G_{k} \mapsto G_{k};
\end{align*}
\item[\rm (ii)] the map $\dagger$ sends
\begin{align*}
W_{-k} \mapsto W_{-k}, \qquad \quad
W_{k+1} \mapsto W_{k+1}, \qquad \quad
G_{k} \mapsto \tilde G_{k}, \qquad \quad
\tilde G_{k} \mapsto G_{k}.
\end{align*}
\end{enumerate}
\end{lemma}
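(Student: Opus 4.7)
The plan is to work inside the $q$-shuffle algebra $F$ via Rosso's embedding $\iota_R\colon U^+_q \hookrightarrow F$ recalled in the introduction, where the alternating elements have the explicit word-images $W_{-k}\mapsto x(yx)^k$, $W_{k+1}\mapsto y(xy)^k$, $G_{k+1}\mapsto (yx)^{k+1}$, $\tilde G_{k+1}\mapsto (xy)^{k+1}$. I would introduce two $\mathbb F$-linear symmetries of $F$: the map $\iota\colon F\to F$ that swaps $x\leftrightarrow y$ in every word, and the map $\rho\colon F\to F$ that reverses every word.

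The first technical step is to verify that $\iota$ is an automorphism, and that $\rho$ is an antiautomorphism, of $(F,\star)$. For $\iota$ this is essentially immediate, because the exponent $q^{(u,v)}$ appearing in the $q$-shuffle product $u\star v = uv + q^{(u,v)}vu$ depends only on whether $u=v$ or $u\ne v$, and this property is invariant under $x\leftrightarrow y$. For $\rho$ one uses the symmetry $(u,v)=(v,u)$ of the bilinear form: on a pair of letters one checks $\rho(u\star v)=\rho(v)\star\rho(u)$ directly, and one extends to arbitrary words via induction on total length, using Rosso's recursive formula for the shuffle product of words.

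The second step is to match these two symmetries of $F$ with $\sigma$ and $\dagger$. Since $\iota_R(W_0)=x$ and $\iota_R(W_1)=y$, and since $U^+_q$ is generated by $W_0,W_1$, the restriction of $\iota$ to $\iota_R(U^+_q)$ must agree with the pushforward of $\sigma$; similarly $\rho$ restricts to $\dagger$. Once this identification is made, the asserted action on the alternating elements is read off directly from the word-images: $\iota$ exchanges $x(yx)^k$ and $y(xy)^k$, and exchanges $(yx)^{k+1}$ and $(xy)^{k+1}$; while $\rho$ fixes each palindrome $x(yx)^k$ and $y(xy)^k$, and exchanges $(yx)^{k+1}$ and $(xy)^{k+1}$.

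The main obstacle will be verifying that $\rho$ is indeed an antiautomorphism of $(F,\star)$; the automorphism property of $\iota$ is straightforward, but reversal interacts with the $q$-shuffle in a subtler way and needs the symmetry of $(u,v)$. As a backup, a purely algebraic inductive proof is available: induct through the ordering $W_0,W_1,G_1,\tilde G_1,W_{-1},W_2,\ldots$ of Lemma~\ref{lem:nrecgen}, apply $\sigma$ (resp.\ $\dagger$) to each defining formula, and use the compatibilities $\sigma([X,Y]_b)=[\sigma(X),\sigma(Y)]_b$ and $\dagger([X,Y]_b)=-b[\dagger(X),\dagger(Y)]_{b^{-1}}$ together with the inductive hypothesis. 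That route works, but the book-keeping (in particular, matching the image of \eqref{eq:nsolvG} under $\sigma$ to $\tilde G_n$) needs the companion identity obtained by applying $\dagger$ to Lemma~\ref{prop:nGGWW}, which makes the shuffle-theoretic argument distinctly cleaner.
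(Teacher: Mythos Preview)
The paper does not supply its own proof of this lemma; it simply quotes the result from \cite[Lemma~5.3]{alternating}. Your $q$-shuffle argument is precisely the one used in that reference: there the alternating elements are \emph{defined} as the preimages of the alternating words under the Rosso embedding, the automorphism $\sigma$ and antiautomorphism $\dagger$ are identified with the letter-swap $\iota$ and word-reversal $\rho$ on $(F,\star)$, and the asserted images are then read off by inspection. Your verification that $\iota$ is a $\star$-automorphism and $\rho$ a $\star$-antiautomorphism is correct (the latter does need the symmetry $(u,v)=(v,u)$ together with an induction via the recursive shuffle formula, exactly as you say), and the identification with $\sigma,\dagger$ follows because $\iota_R(U^+_q)$ is generated by the $\iota$- and $\rho$-stable set $\{x,y\}$.

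Your backup inductive route through Lemma~\ref{lem:nrecgen} also works, but note a small wrinkle in the book-keeping you flag: in this paper the recursion of Lemma~\ref{lem:nrecgen} is presented \emph{after} the present lemma, so if you wanted a self-contained argument in the order of this paper you would need to check there is no circularity (there is not, since both are imported independently from \cite{alternating}).
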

\begin{lemma}
\label{lem:ms2}
Pick nonzero $\lambda_0, \lambda_1\in \mathbb F$ and consider the corresponding automorphism of $U^+_q$ from Lemma
\ref{lem:ms}.  For $k \in \mathbb N$ the automorphism sends
\begin{align*}
W_{-k} \mapsto \lambda^{k+1}_0 \lambda^k_1 W_{-k}, 
\qquad 
W_{k+1} \mapsto \lambda^{k}_0 \lambda^{k+1}_1 W_{k+1}, 
\qquad 
G_k \mapsto \lambda^{k}_0 \lambda^k_1 G_{k}, 
\qquad 
{\tilde G}_k \mapsto \lambda^{k}_0 \lambda^k_1 {\tilde G}_{k}. 
\end{align*}
\end{lemma}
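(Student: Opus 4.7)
The plan is to proceed by induction following the recursive order in which the alternating elements are constructed in Lemma \ref{lem:nrecgen}, namely $W_0, W_1, G_1, \tilde G_1, W_{-1}, W_2, G_2, \tilde G_2, W_{-2}, \ldots$. Let $\varphi$ denote the automorphism in question. The base cases are immediate: $\varphi(W_0) = \lambda_0 W_0$ agrees with the formula for $W_{-k}$ at $k=0$ (since $\lambda_0^{0+1}\lambda_1^0 = \lambda_0$), and $\varphi(W_1) = \lambda_1 W_1$ agrees with the formula for $W_{k+1}$ at $k=0$. The convention $G_0 = \tilde G_0 = 1$ is also consistent with the scaling factor $\lambda_0^0\lambda_1^0 = 1$.

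For the inductive step, I would verify that each of the four recursive formulas \eqref{eq:nsolvG}--\eqref{eq:nsolvWp} is homogeneous in a compatible way. The crucial bookkeeping is that for every term appearing in the recursion for the element being defined, the total $(\lambda_0, \lambda_1)$-weight matches the claimed weight of the element. For example, in \eqref{eq:nsolvG}, a summand $W_{-k}W_{n-k}$ scales as
\begin{align*}
\lambda_0^{k+1}\lambda_1^k \cdot \lambda_0^{n-k-1}\lambda_1^{n-k} = \lambda_0^n\lambda_1^n,
\end{align*}
a summand $G_k \tilde G_{n-k}$ scales as $\lambda_0^k\lambda_1^k \cdot \lambda_0^{n-k}\lambda_1^{n-k} = \lambda_0^n\lambda_1^n$, and $[W_n,W_0]$ scales as $\lambda_0^{n-1}\lambda_1^n \cdot \lambda_0 = \lambda_0^n\lambda_1^n$. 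Hence $\varphi(G_n) = \lambda_0^n\lambda_1^n G_n$. The formula \eqref{eq:nsolvGt} is then immediate since both summands scale by $\lambda_0^n\lambda_1^n$.

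The $W_{-n}$ case uses \eqref{eq:nsolvWm}: $[W_0, G_n]_b$ scales as $\lambda_0 \cdot \lambda_0^n\lambda_1^n = \lambda_0^{n+1}\lambda_1^n$, giving the claimed weight. Similarly, $[G_n, W_1]_b$ in \eqref{eq:nsolvWp} scales as $\lambda_0^n\lambda_1^n \cdot \lambda_1 = \lambda_0^n\lambda_1^{n+1}$, giving the weight for $W_{n+1}$. There is no real obstacle here — the proof is pure bookkeeping — but the one point that requires care is confirming that at the moment we apply induction to each right-hand side, every ingredient has already appeared earlier in the recursive order, so the inductive hypothesis is legitimately available. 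This follows directly from the order listed in Lemma \ref{lem:nrecgen}, and completes the proof.
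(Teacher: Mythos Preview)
Your proof is correct and follows exactly the approach indicated in the paper: induction on the recursive ordering of the alternating elements from Lemma~\ref{lem:nrecgen}, checking that each of the defining relations \eqref{eq:nsolvG}--\eqref{eq:nsolvWp} is homogeneous of the appropriate bidegree. You have simply spelled out the bookkeeping that the paper leaves to the reader.
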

\begin{proof} Use induction on the ordering of the alternating elements given in Lemma \ref{lem:nrecgen}.
\end{proof}

\noindent We mention some facts for later use.


\begin{proposition} \label{lem:fourP} For $k \in \mathbb N$ the following {\rm (i)--(iv)} hold in $U^+_q$.
\begin{enumerate}
\item[\rm (i)] 
$W_{-k}$ satisfies
\begin{align*}
\lbrack W_0, W_{-k}\rbrack=0, \qquad \qquad 
\lbrack W_1, \lbrack W_1, \lbrack W_1, W_{-k}\rbrack_b \rbrack_{b^{-1}} \rbrack &= 0.
\end{align*}
\item[\rm (ii)] 
$W_{k+1}$ satisfies
\begin{align*}
\lbrack W_0, \lbrack W_0, \lbrack W_0, W_{k+1}\rbrack_b \rbrack_{b^{-1}} \rbrack = 0,
\qquad \qquad \lbrack W_1, W_{k+1}\rbrack =0. 
\end{align*}
\item[\rm (iii)] $G_{k+1} $ satisfies
\begin{align*} 
\lbrack W_0, \lbrack W_0, G_{k+1}\rbrack_b \rbrack=0, \qquad \qquad
\lbrack \lbrack G_{k+1}, W_1\rbrack_b,  W_1\rbrack = 0.  
\end{align*}
\item[\rm (iv)] ${\tilde G}_{k+1} $ satisfies
\begin{align*}
\lbrack \lbrack {\tilde G}_{k+1}, W_0\rbrack_b,  W_0\rbrack = 0,   
\qquad \qquad \lbrack W_1, \lbrack W_1, {\tilde G}_{k+1}\rbrack_b \rbrack =0.
\end{align*}
\end{enumerate}
\end{proposition}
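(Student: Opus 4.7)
The proposition decomposes into eight identities, which I would dispatch in three groups of increasing difficulty.

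Parts (iii) and (iv) are immediate from Lemmas \ref{lem:nrel1} and \ref{lem:nrel2}. For the first identity in (iii), Lemma \ref{lem:nrel1} gives $[W_0,G_{k+1}]_b=(b-1)W_{-k-1}$, so
\begin{align*}
[W_0,[W_0,G_{k+1}]_b]=(b-1)[W_0,W_{-k-1}]=(b-1)[W_{-0},W_{-(k+1)}]=0,
\end{align*}
the last equality being Lemma \ref{lem:nrel2}. The second identity in (iii) follows from $[G_{k+1},W_1]_b=(b-1)W_{k+2}$ together with $[W_{(k+1)+1},W_{0+1}]=0$, which is again Lemma \ref{lem:nrel2}. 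Part (iv) is identical in spirit, using the companion identities $[\tilde G_{k+1},W_0]_b=(b-1)W_{-k-1}$ and $[W_1,\tilde G_{k+1}]_b=(b-1)W_{k+2}$ from Lemma \ref{lem:nrel1}.

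For parts (i) and (ii), the commutation identities $[W_0,W_{-k}]=0$ and $[W_1,W_{k+1}]=0$ are both immediate from Lemma \ref{lem:nrel2}, once one uses the conventions $W_0=W_{-0}$ and $W_1=W_{0+1}$ to place the generators inside the alternating families.

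This leaves the two $q$-Serre-type relations $[W_1,[W_1,[W_1,W_{-k}]_b]_{b^{-1}}]=0$ and $[W_0,[W_0,[W_0,W_{k+1}]_b]_{b^{-1}}]=0$, which form the main obstacle. The automorphism $\sigma$ of Lemma \ref{lem:nsigSact}(i) swaps $W_0\leftrightarrow W_1$ and $W_{-k}\leftrightarrow W_{k+1}$, so it carries either of these relations to the other; it is therefore enough to establish, say, the second. I would argue by induction on $k$: the base case $k=0$ is the defining $q$-Serre relation \eqref{eq:nqSerre1} of $U^+_q$; in the inductive step, rewrite $W_{k+1}=[G_k,W_1]_b/(b-1)$ via Lemma \ref{lem:nrecgen}, expand $[W_0,[W_0,[W_0,W_{k+1}]_b]_{b^{-1}}]$ using Jacobi-type identities for the brackets $[\,\cdot\,,\,\cdot\,]_b$ and $[\,\cdot\,,\,\cdot\,]_{b^{-1}}$, and apply the relations of Lemma \ref{lem:nrel1} together with parts (iii) and (iv), already proved, to collapse the resulting expression to zero. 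An attractive alternative is to verify the identity under the Rosso embedding, where $W_0\mapsto x$ and $W_{k+1}\mapsto y(xy)^k$, reducing the question to a direct computation in the $q$-shuffle algebra.
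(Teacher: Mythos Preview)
Your treatment of (iii), (iv), and the plain-commutator halves of (i), (ii) is correct and matches the paper exactly. The use of $\sigma$ to reduce the two cubic relations to one is also the paper's move.

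The gap is in your handling of the remaining cubic relation. Induction on $k$ via $W_{k+1}=[G_k,W_1]_b/(b-1)$ followed by ``Jacobi-type identities'' is not a proof: the $q$-brackets $[\,\cdot\,,\,\cdot\,]_b$ do not satisfy a Jacobi identity, and you have not indicated what identity would let you push three copies of $W_0$ past the $q$-bracket $[G_k,W_1]_b$. Nothing in the cited lemmas controls an expression like $[W_0,[W_0,[W_0,[G_k,W_1]_b]_b]_{b^{-1}}]$, and I do not see how the pieces (iii), (iv) alone make this collapse. The $q$-shuffle alternative is plausible but likewise not carried out.

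The paper avoids induction entirely. The key observation is that the triple bracket $[X,[X,[X,Y]_b]_{b^{-1}}]$ expands to the symmetric expression $X^3Y-(b+b^{-1}+1)X^2YX+(b+b^{-1}+1)XYX^2-YX^3$, so the three bracket parameters $1,b,b^{-1}$ may be permuted; in particular
\[
[W_1,[W_1,[W_1,W_{-k}]_b]_{b^{-1}}]=[W_1,[W_1,[W_1,W_{-k}]]_b]_{b^{-1}}.
\]
Now the innermost bracket is plain, and Lemma \ref{lem:nrel1} gives $[W_1,W_{-k}]=(1-b^{-1})(G_{k+1}-\tilde G_{k+1})$. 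Splitting the result into a $G_{k+1}$ piece and a $\tilde G_{k+1}$ piece, one more application of Lemma \ref{lem:nrel1} (namely $[G_{k+1},W_1]_b=(b-1)W_{k+2}=[W_1,\tilde G_{k+1}]_b$) reduces everything to $[W_1,W_{k+2}]$, which vanishes by Lemma \ref{lem:nrel2}. This is a half-page direct computation with no induction; you should replace your sketch by this argument.
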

\begin{proof} (i) The equation $\lbrack W_0, W_{-k} \rbrack=0$ is from Lemma \ref{lem:nrel2}.
Observe that
\begin{align*}
\lbrack W_1, &\lbrack W_1, \lbrack W_1, W_{-k}\rbrack_b \rbrack_{b^{-1}} \rbrack 
\\
&=
\lbrack W_1, \lbrack W_1, \lbrack W_1, W_{-k}\rbrack \rbrack_{b} \rbrack_{b^{-1}} \\
&=(1-b^{-1}) \lbrack W_1, \lbrack W_1, G_{k+1} - \tilde G_{k+1} \rbrack_{b} \rbrack_{b^{-1}} \\
&=(1-b^{-1}) \lbrack W_1, \lbrack W_1, G_{k+1} \rbrack_{b} \rbrack_{b^{-1}} -(1-b^{-1}) \lbrack W_1, \lbrack W_1, \tilde G_{k+1} \rbrack_{b} \rbrack_{b^{-1}} \\
&=(1-b^{-1}) \lbrack W_1, \lbrack W_1, G_{k+1} \rbrack_{b^{-1}} \rbrack_{b} +(b^{-1}-1) \lbrack W_1, \lbrack W_1, \tilde G_{k+1} \rbrack_{b} \rbrack_{b^{-1}} \\
&=(b^{-1}-1) b^{-1}\lbrack W_1, \lbrack  G_{k+1}, W_1 \rbrack_b \rbrack_b +(b^{-1}-1) \lbrack W_1, \lbrack  W_1, \tilde G_{k+1} \rbrack_b \rbrack_{b^{-1}} \\
&=(b^{-1}-1)(1-b^{-1}) \lbrack W_1, W_{k+2}\rbrack_b +(b^{-1}-1)(b-1) \lbrack W_1, W_{k+2} \rbrack_{b^{-1}} \\
&=(b^{-1}-1)(b-b^{-1}) \lbrack W_1, W_{k+2}\rbrack \\
&=0.
\end{align*}
\noindent (ii) Apply the automorphism $\sigma$ to each side of the equations in (i) above.
\\
\noindent (iii) We have
\begin{align*}
\lbrack W_0, \lbrack W_0, G_{k+1}\rbrack_b \rbrack &=(b-1) \lbrack W_0,  W_{-k-1} \rbrack =0.
\end{align*}
\noindent We also have
\begin{align*}
\lbrack \lbrack G_{k+1}, W_1\rbrack_b,  W_1\rbrack &= (b-1) \lbrack W_{k+2}, W_1\rbrack = 0.
\end{align*}
\noindent (iv) Apply the automorphism $\sigma$ to each side of the equations in (iii) above.
\end{proof}

\section{TD systems of $q$-Serre type}

We return our attention to the nontrivial TD system $\Phi=(A;\lbrace E_i\rbrace_{i=0}^d;A^*;\lbrace E^*_i\rbrace_{i=0}^d)$ on $V$. Recall that $\Phi$ has eigenvalue sequence
$\lbrace \theta_i \rbrace_{i=0}^d$ and dual eigenvalue sequence $\lbrace \theta^*_i \rbrace_{i=0}^d$.
\begin{definition} \label{def:qGeom} \rm (See \cite[Definition~2.6]{nonnil}.)  The TD system $\Phi$ is said to have
 {\it $q$-Serre type} whenever $\theta_i = b \theta_{i-1}$ and $\theta^*_i = b^{-1} \theta^*_{i-1}$ for $1 \leq i \leq d$.
\end{definition}
\noindent We refer the reader to \cite{CurtH, hasan2, hasan, shape, TD00, qSerre, tdanduqsl2hat, nonnil, qtet, evalTetq, aayush} for background information on TD systems of $q$-Serre type.
\medskip

\noindent From now until the end of Section 10, we assume that $\Phi$ has $q$-Serre type.
\begin{lemma} \label{lem:qS} The elements $A$, $A^*$ satisfy the $q$-Serre relations
\begin{align}
\lbrack A, \lbrack A, \lbrack A, A^*\rbrack_b \rbrack_{b^{-1}} \rbrack=0,
\qquad \qquad 
\lbrack A^*, \lbrack A^*, \lbrack A^*, A\rbrack_b \rbrack_{b^{-1}}
\rbrack=0.
\label{eq:qSSS}
\end{align}
\end{lemma}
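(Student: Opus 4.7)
The plan is to invoke Lemma \ref{eq:lastchancedolangradyS992} with an explicit choice of the scalars $\beta, \gamma, \gamma^*, \varrho, \varrho^*$ dictated by the $q$-Serre type, and then recognize that the resulting tridiagonal relations \eqref{eq:TD1}, \eqref{eq:TD2} are exactly the $q$-Serre relations \eqref{eq:qSSS}.

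First I would take $\beta = b + b^{-1}$ and $\gamma = \gamma^* = \varrho = \varrho^* = 0$, and check conditions (i)--(v) of Lemma \ref{eq:lastchancedolangradyS992} one at a time. By Definition \ref{def:qGeom} we have $\theta_i = b^i \theta_0$ and $\theta^*_i = b^{-i}\theta^*_0$ for $0 \leq i \leq d$. A short computation gives
\begin{align*}
\frac{\theta_{i-2}-\theta_{i+1}}{\theta_{i-1}-\theta_i}
= \frac{b^{i-2}-b^{i+1}}{b^{i-1}-b^i}
= \frac{1-b^3}{b(1-b)} = b^{-1}+1+b,
\end{align*}
and the same value is obtained from the dual eigenvalues, so (i) holds with $\beta+1 = b^{-1}+1+b$. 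For (ii) one computes
\begin{align*}
\theta_{i-1}-\beta\theta_i+\theta_{i+1}
= \theta_0 b^{i-1}\bigl(1-(b+b^{-1})b+b^2\bigr) = 0,
\end{align*}
so $\gamma = 0$ is forced; (iii) is analogous. For (iv),
\begin{align*}
\theta^2_{i-1}-\beta \theta_{i-1}\theta_i+\theta_i^2
= \theta_0^2 b^{2i-2}\bigl(1-(b+b^{-1})b+b^2\bigr)=0,
\end{align*}
so $\varrho = 0$; and (v) is handled the same way. By Lemma \ref{eq:lastchancedolangradyS992}, the maps $A$, $A^*$ satisfy \eqref{eq:TD1}, \eqref{eq:TD2} with the scalars chosen above.

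Finally I would substitute these values into \eqref{eq:TD1}, \eqref{eq:TD2}. With $\gamma = \varrho = 0$, relation \eqref{eq:TD1} reduces to $[A, A^2 A^* - (b+b^{-1})A A^* A + A^* A^2] = 0$, and expanding this commutator gives
\begin{align*}
A^3 A^* - (b+b^{-1}+1)A^2 A^* A + (b+b^{-1}+1)A A^* A^2 - A^* A^3 = 0,
\end{align*}
which by the identity recorded just before Definition \ref{def:nposp} is precisely $[A,[A,[A,A^*]_b]_{b^{-1}}]=0$. The second relation in \eqref{eq:qSSS} follows from \eqref{eq:TD2} in the same way. The only nontrivial part is the routine but careful verification of (i)--(v); the passage from the tridiagonal relations to the $q$-Serre relations is then just a symbolic expansion.
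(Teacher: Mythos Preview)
Your proof is correct and follows exactly the approach of the paper: invoke Lemma \ref{eq:lastchancedolangradyS992} with $\beta = b+b^{-1}$ and $\gamma=\gamma^*=\varrho=\varrho^*=0$, check that conditions (i)--(v) hold for the $q$-Serre type eigenvalues, and then observe that the tridiagonal relations \eqref{eq:TD1}, \eqref{eq:TD2} reduce to the $q$-Serre relations. The only difference is that you spell out the verifications of (i)--(v) and the final expansion, whereas the paper simply asserts them.
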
 
\begin{proof} We invoke Lemma \ref{eq:lastchancedolangradyS992}.
The scalars
\begin{align}
\label{eq:vvalues}
\beta = b+b^{-1}, \qquad \gamma=0, \qquad \gamma^*=0, \qquad \varrho=0, \qquad \varrho^*=0
\end{align}
satisfy the conditions (i)--(v) in Lemma \ref{eq:lastchancedolangradyS992}. Using \eqref{eq:vvalues} the tridiagonal relations \eqref{eq:TD1}, \eqref{eq:TD2} become the relations \eqref{eq:qSSS}.
\end{proof}
\begin{corollary}
\label{cor:modU} The vector space $V$ becomes
a $U^+_q$-module on which $W_0=A$ and $W_1=A^*$. The $U^+_q$-module $V$ is irreducible.
\end{corollary}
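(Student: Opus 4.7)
The plan is to produce the module structure from the universal property of the presentation of $U^+_q$, and then deduce irreducibility directly from condition (iv) in Definition \ref{def:tdp}.

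First I would invoke Lemma \ref{lem:qS}, which tells us that $A$ and $A^*$ satisfy the $q$-Serre relations \eqref{eq:qSSS}. Since $U^+_q$ is defined in Definition \ref{def:nposp} by the generators $W_0, W_1$ subject to exactly these relations, the universal property of this presentation yields a unique algebra homomorphism $\pi : U^+_q \to \mathrm{End}(V)$ with $\pi(W_0) = A$ and $\pi(W_1) = A^*$. This homomorphism turns $V$ into a $U^+_q$-module on which $W_0$ acts as $A$ and $W_1$ acts as $A^*$, as required.

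For irreducibility, let $W \subseteq V$ be a $U^+_q$-submodule. Then in particular $W$ is invariant under the actions of $W_0 = A$ and $W_1 = A^*$, so $AW \subseteq W$ and $A^*W \subseteq W$. By Definition \ref{def:tdp}(iv), either $W = 0$ or $W = V$. Since $V$ itself is nonzero by the standing assumption, this shows that $V$ is an irreducible $U^+_q$-module.

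The argument is essentially a one-line application of Definition \ref{def:tdp}(iv) once the module structure is in place, so there is no real obstacle; the only substantive ingredient is Lemma \ref{lem:qS}, which has already been established.
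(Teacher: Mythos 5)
Your proposal is correct and follows exactly the same route as the paper: Lemma \ref{lem:qS} together with the universal property of the presentation in Definition \ref{def:nposp} gives the module structure, and condition (iv) of Definition \ref{def:tdp} gives irreducibility. The paper states this in two sentences; you have simply spelled out the two citations in full.
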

\begin{proof} The first assertion follows from Definition \ref{def:nposp} and
Lemma \ref{lem:qS}.
The last assertion follows from condition (iv) in Definition \ref{def:tdp}.
\end{proof}

 \noindent Next we consider how the alternating elements of $U^+_q$ act on the six decompositions of $V$ from Example \ref{thm:sixdecp}.
We will use the following result.
\begin{lemma} \label{lem:coeff} For $0 \leq i,j\leq d$,
\begin{align*}
b \theta_i = \theta_j  \quad &{\mbox{iff}} \quad i=j-1 \quad {\mbox{iff}} \quad \theta^*_i =  b\theta^*_j;\\
\theta_i =  \theta_j  \quad &{\mbox{iff}} \qquad i=j \quad\;\; {\mbox{iff}} \quad \theta^*_i = \theta^*_j;\\
 \theta_i =b\theta_j  \quad &{\mbox{iff} }\quad i=j+1 \quad {\mbox{iff}} \quad b \theta^*_i =  \theta^*_j.
\end{align*}
\end{lemma}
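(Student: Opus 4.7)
The plan is to reduce everything to the observation that, under the $q$-Serre type assumption, the eigenvalues and dual eigenvalues are geometric progressions, and then invoke the fact that $b$ is not a root of unity.

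First I would unpack Definition \ref{def:qGeom}: a straightforward induction gives the closed forms
\[
\theta_i = b^i \theta_0, \qquad \theta^*_i = b^{-i} \theta^*_0 \qquad (0 \leq i \leq d).
\]
Next I would observe that $\theta_0 \neq 0$ and $\theta^*_0 \neq 0$. Indeed, if $\theta_0 = 0$ then the formula above forces $\theta_i = 0$ for all $i$, contradicting the fact (noted below Definition \ref{def:TDsystem}) that $\lbrace \theta_i \rbrace_{i=0}^d$ are mutually distinct, together with $d \geq 1$ from the nontriviality of $\Phi$. The same argument rules out $\theta^*_0 = 0$.

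With these preliminaries, each of the three rows of biconditionals becomes a one-line calculation. For instance, $b\theta_i = \theta_j$ translates to $b^{i+1}\theta_0 = b^j \theta_0$, hence $b^{i+1-j} = 1$, hence $i = j-1$ because $b = q^2$ inherits from $q$ the property of not being a root of unity. The equivalence with $\theta^*_i = b\theta^*_j$ is handled by the analogous computation $b^{-i}\theta^*_0 = b \cdot b^{-j}\theta^*_0$, giving $b^{j-1-i} = 1$ and hence $i = j-1$ again. The second and third rows are completely parallel; the middle row uses $b^{i-j}=1$ and the last uses $b^{i-j-1}=1$.

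There is no real obstacle here; the only point that requires any care is to justify that $\theta_0,\theta^*_0$ are nonzero and that $b$ is not a root of unity, both of which are immediate from the hypotheses already in force. I would write the proof as a single short block that handles all three biconditionals in parallel via the exponents of $b$.
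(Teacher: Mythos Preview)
Your proposal is correct and follows exactly the same approach as the paper, which simply cites Definition~\ref{def:qGeom} and the fact that $b$ is not a root of unity. You have merely unpacked that one-line argument explicitly, including the observation that $\theta_0,\theta^*_0\neq 0$, which the paper leaves implicit.
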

\begin{proof} By Definition \ref{def:qGeom} and since $b$ is not a root of unity.
\end{proof}

\begin{lemma} \label{lem:EWE} For  $k \in \mathbb N$ and $0 \leq i,j \leq d$ the following {\rm (i)--(iv)} hold on the $U^+_q$-module $V$.
\begin{enumerate}
\item[\rm (i)] 
$W_{-k}$ satisfies
\begin{align*}
E_i W_{-k} E_j = 0 \quad &{\mbox{\rm if $i\not=j$}},\\
E^*_i W_{-k} E^*_j = 0 \quad &{\mbox{\rm if $\vert i-j\vert > 1$}}.
\end{align*}
\item[\rm (ii)] 
$W_{k+1}$ satisfies
\begin{align*}
E_i W_{k+1} E_j = 0 \quad &{\mbox{\rm if $\vert i-j\vert >1$}},\\
E^*_i W_{k+1} E^*_j = 0 \quad &{\mbox{\rm if $i\not=j$}}.
\end{align*}
\item[\rm (iii)] 
$G_{k+1}$ satisfies
\begin{align*}
E_i G_{k+1} E_j = 0 \quad &{\mbox{\rm if $j-i \not\in \lbrace 0,1\rbrace $}},\\
E^*_i G_{k+1} E^*_j = 0 \quad &{\mbox{\rm if $j-i \not\in \lbrace 0,1\rbrace $}}.
\end{align*}
\item[\rm (iv)] ${\tilde G}_{k+1} $ satisfies
\begin{align*}
E_i \tilde G_{k+1} E_j &= 0 \quad {\mbox{\rm if $i-j \not\in \lbrace 0,1\rbrace $}},\\
E^*_i \tilde G_{k+1} E^*_j &= 0 \quad {\mbox{\rm if $i-j \not\in \lbrace 0,1\rbrace $}}.
\end{align*}
\end{enumerate}
\end{lemma}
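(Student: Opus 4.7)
The plan is to exploit Proposition \ref{lem:fourP}: each of $W_{-k}$, $W_{k+1}$, $G_{k+1}$, $\tilde G_{k+1}$ satisfies there a pair of commutator relations with $W_0 = A$ and $W_1 = A^*$, one of degree one (in either $A$ or $A^*$) and one of higher degree (a cubic, or a quadratic $q$-commutator relation). For any such relation $P(A, X) = 0$ or $P(A^*, X) = 0$ with $X$ an alternating element, sandwiching by $E_i$ and $E_j$ (respectively by $E^*_i$ and $E^*_j$) collapses every power of $A$ (resp.\ $A^*$) to the scalar $\theta_i$ or $\theta_j$ (resp.\ $\theta^*_i$ or $\theta^*_j$), reducing the identity to an equation of the form $p(\theta_i, \theta_j) \cdot E_i X E_j = 0$ for some scalar polynomial $p$. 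One then factors $p$ and reads off the forbidden pairs $(i,j)$ from Lemma \ref{lem:coeff}.

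For example, for part (i) the relation $[W_0, W_{-k}] = 0$ sandwiched by $E_i, E_j$ immediately gives $(\theta_i - \theta_j) E_i W_{-k} E_j = 0$, which establishes the first claim since the $\theta_i$ are mutually distinct. For the second claim I would use the cubic relation $[W_1, [W_1, [W_1, W_{-k}]_b]_{b^{-1}}] = 0$ together with the expansion of $[X, [X, [X, Y]_b]_{b^{-1}}]$ displayed just before Definition \ref{def:nposp}. Sandwiching by $E^*_i, E^*_j$ produces the scalar coefficient
\[
(\theta^*_i)^3 - (b+b^{-1}+1)(\theta^*_i)^2\theta^*_j + (b+b^{-1}+1)\theta^*_i(\theta^*_j)^2 - (\theta^*_j)^3,
\]
which factors as $(\theta^*_i - \theta^*_j)(\theta^*_i - b\theta^*_j)(\theta^*_i - b^{-1}\theta^*_j)$. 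By Lemma \ref{lem:coeff} these three factors vanish exactly when $i = j$, $i = j-1$, $i = j+1$, respectively, so the whole coefficient is nonzero whenever $|i - j| > 1$, forcing $E^*_i W_{-k} E^*_j = 0$. Part (ii) is handled identically using the two relations in Proposition \ref{lem:fourP}(ii), with the roles of $(A, \lbrace E_i \rbrace_{i=0}^d)$ and $(A^*, \lbrace E^*_i \rbrace_{i=0}^d)$ interchanged.

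For part (iii) I would apply the same sandwich method to the two relations in Proposition \ref{lem:fourP}(iii). Expanding $[W_0, [W_0, G_{k+1}]_b] = 0$ and sandwiching by $E_i, E_j$ yields $\bigl[b\theta_i^2 - (1+b)\theta_i\theta_j + \theta_j^2\bigr] E_i G_{k+1} E_j = 0$, and since $b\theta_i^2 - (1+b)\theta_i\theta_j + \theta_j^2 = (b\theta_i - \theta_j)(\theta_i - \theta_j)$, Lemma \ref{lem:coeff} shows this coefficient is nonzero unless $j - i \in \lbrace 0, 1\rbrace$. Expanding $[[G_{k+1}, W_1]_b, W_1] = 0$ and sandwiching by $E^*_i, E^*_j$ similarly gives the factor $(\theta^*_i - b\theta^*_j)(\theta^*_i - \theta^*_j)$, which is nonzero unless $j - i \in \lbrace 0, 1\rbrace$. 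Part (iv) is analogous, using Proposition \ref{lem:fourP}(iv). No deep obstacle arises; the only delicate step is expanding the cubic and quadratic commutators correctly so that the resulting polynomials factor cleanly over the scalars $1, b, b^{-1}$ dictated by Lemma \ref{lem:coeff}.
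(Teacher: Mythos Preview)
Your proposal is correct and follows essentially the same approach as the paper: sandwich each relation from Proposition \ref{lem:fourP} by $E_i,\dots,E_j$ (or $E^*_i,\dots,E^*_j$), collapse the powers of $A$ (resp.\ $A^*$) to eigenvalues, factor the resulting scalar polynomial, and invoke Lemma \ref{lem:coeff}. The paper presents this a bit more tersely by listing the eight resulting scalar equations directly, but your explicit factorizations (e.g.\ $(\theta^*_i-\theta^*_j)(\theta^*_i-b\theta^*_j)(\theta^*_i-b^{-1}\theta^*_j)$ for the cubic and $(b\theta_i-\theta_j)(\theta_i-\theta_j)$ for the quadratic) are exactly what is being used.
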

\begin{proof} For each equation in Proposition \ref{lem:fourP} that involves $W_0$, multiply each term on the left  by $E_i$ and on the right by $E_j$. Simplify the
result using $E_iA = \theta_i E_i$ and $AE_j = \theta_j E_j$ along with $A=W_0$.
For each equation in Proposition  \ref{lem:fourP} that involves $W_1$, multiply each term on the left by $E^*_i$ and on the right by $E^*_j$. Simplify the
result using $E^*_i A^* = \theta^*_i E^*_i$ and $A^*E^*_j = \theta^*_j E^*_j$ along with $A^*=W_1$. By these comments, the following equations hold on $V$:
\begin{align*} 
E_i W_{-k} E_j (\theta_i - \theta_j)&=0,\\
E^*_i W_{-k} E^*_j (\theta^*_i - \theta^*_j)(b \theta^*_i -  \theta^*_j)(b^{-1}\theta^*_i - \theta^*_j)&=0, \\
E_i W_{k+1} E_j (\theta_i - \theta_j)(b \theta_i -  \theta_j)(b^{-1} \theta_i - \theta_j)&=0, \\
E^*_i W_{k+1} E^*_j (\theta^*_i - \theta^*_j)&=0,\\
E_i G_{k+1} E_j (\theta_i - \theta_j)(b\theta_i -  \theta_j)&=0,\\
E^*_i G_{k+1} E^*_j (\theta^*_i - \theta^*_j)(\theta^*_i- b\theta^*_j)&=0, \\
E_i \tilde G_{k+1} E_j (\theta_i - \theta_j)( \theta_i -  b\theta_j)&=0, \\
E^*_i \tilde  G_{k+1} E^*_j (\theta^*_i - \theta^*_j)(b \theta^*_i -  \theta^*_j)&=0.
\end{align*}
Evaluate the above equations using Lemma \ref{lem:coeff} to get the result.
\end{proof}
\noindent The following result is a reformulation of Lemma \ref{lem:EWE}.

\begin{lemma} \label{lem:nv} We refer to the $U^+_q$-module $V$ from Corollary \ref{cor:modU}. For  $k \in \mathbb N$ and $0 \leq j \leq d$ the following {\rm (i)--(iv)} hold.
\begin{enumerate}
\item[\rm (i)] 
$W_{-k}$ satisfies
\begin{align*}
W_{-k} E_j V& \subseteq E_jV, \\
W_{-k} E^*_j V & \subseteq E^*_{j-1}V+E^*_jV+ E^*_{j+1}V.
\end{align*}
\item[\rm (ii)] 
$W_{k+1}$ satisfies
\begin{align*}
W_{k+1} E_j V & \subseteq E_{j-1}V+E_jV+ E_{j+1}V, \\
W_{k+1} E^*_j V & \subseteq E^*_jV.
\end{align*}
\item[\rm (iii)] 
$G_{k+1}$ satisfies
\begin{align*}
G_{k+1} E_jV &\subseteq E_{j-1}V + E_jV,\\
G_{k+1} E^*_jV & \subseteq E^*_{j-1}V+E^*_jV.
\end{align*}
\item[\rm (iv)] ${\tilde G}_{k+1} $ satisfies
\begin{align*}
 \tilde G_{k+1} E_jV &\subseteq E_jV+E_{j+1}V,\\
  \tilde G_{k+1} E^*_jV&\subseteq E^*_jV+E^*_{j+1}V.
\end{align*}
\end{enumerate}
\noindent We are using the convention
\begin{align*} 
 E_{-1}=0, \qquad E_{d+1}=0, \qquad E^*_{-1}=0, \qquad E^*_{d+1}=0.
 \end{align*}
\end{lemma}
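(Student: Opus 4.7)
The plan is to derive Lemma \ref{lem:nv} as a direct reformulation of Lemma \ref{lem:EWE}, using the standard resolution of the identity. Concretely, for any $X \in \mathrm{End}(V)$ and any $0 \le j \le d$, if $v \in E_jV$ then $E_j v = v$, so inserting $I = \sum_{i=0}^d E_i$ on the left gives
\begin{align*}
Xv \;=\; \sum_{i=0}^d E_i X v \;=\; \sum_{i=0}^d E_i X E_j v.
\end{align*}
Hence $X(E_jV) \subseteq \sum_{i} E_i X E_j V$, and the inclusion is controlled entirely by the indices $i$ for which $E_i X E_j \ne 0$. The analogous statement holds with starred idempotents throughout.

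I would then apply this observation four times, once for each kind of alternating element, using Lemma \ref{lem:EWE} to read off the vanishing conditions. For $X = W_{-k}$, Lemma \ref{lem:EWE}(i) gives $E_i W_{-k} E_j = 0$ unless $i = j$, producing $W_{-k} E_j V \subseteq E_j V$; using starred idempotents instead gives $E^*_i W_{-k} E^*_j = 0$ unless $|i-j| \le 1$, producing $W_{-k} E^*_jV \subseteq E^*_{j-1}V + E^*_jV + E^*_{j+1}V$. The same recipe with $X = W_{k+1}$, $G_{k+1}$, $\tilde G_{k+1}$ and parts (ii)--(iv) of Lemma \ref{lem:EWE} yields the remaining six inclusions; the convention $E_{-1} = E_{d+1} = E^*_{-1} = E^*_{d+1} = 0$ handles the boundary terms.

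There is essentially no obstacle here; this lemma is a bookkeeping consequence of Lemma \ref{lem:EWE}. The only thing to be careful about is keeping track of which side of the inequality for $j - i$ corresponds to raising versus lowering, since Lemma \ref{lem:EWE}(iii)--(iv) give asymmetric conditions ($j - i \in \{0,1\}$ for $G_{k+1}$ and $i - j \in \{0,1\}$ for $\tilde G_{k+1}$), which then translate via the identity resolution into $G_{k+1}$ lowering (mapping $E_jV$ to $E_{j-1}V + E_jV$) and $\tilde G_{k+1}$ raising (mapping $E_jV$ to $E_jV + E_{j+1}V$). Writing the argument out once in full for $W_{-k}$ and then asserting ``the other three cases are analogous'' should suffice.
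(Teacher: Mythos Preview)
Your proposal is correct and matches the paper's proof essentially line for line: the paper also inserts $I = \sum_{i=0}^d E_i$ (resp.\ $\sum_{i=0}^d E^*_i$) on the left of $W_{-k} E_j V$ (resp.\ $W_{-k} E^*_j V$), invokes Lemma \ref{lem:EWE}(i) to kill all but the surviving terms, and then says parts (ii)--(iv) are similar.
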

\begin{proof} (i) Using Lemma \ref{lem:EWE}(i) we obtain
\begin{align*}  
      W_{-k} E_jV &= I W_{-k} E_jV 
                         = \sum_{i=0}^d E_i W_{-k} E_j V 
                           = E_j W_{-k} E_j V 
                           \subseteq E_jV, 
                           \end{align*}
                           and also
                           \begin{align*}
 W_{-k} E^*_jV &= I W_{-k} E^*_jV 
                         = \sum_{i=0}^d E^*_i W_{-k} E^*_j V 
                           = \sum_{i=j-1}^{j+1} E^*_i W_{-k} E^*_j V 
                           \subseteq E^*_{j-1} V + E^*_jV+E^*_{j+1}V.
\end{align*}
\noindent (ii)--(iv) Similar to the proof of (i) above.
\end{proof}

\begin{proposition} \label{lem:flagAct} 
We refer to the $U^+_q$-module $V$ from Corollary \ref{cor:modU}.
In the table below, we show how the alternating elements of $U^+_q$ act on the flags from Definition  \ref{lem:fourflag}. 
For $k \in \mathbb N$,
\bigskip

\centerline{
\begin{tabular}[t]{c|cccc}
      {\rm flag name} &{\rm $W_{-k}$ action } & {\rm $W_{k+1}$ action} &{\rm $G_{k+1}$ action} & {\rm ${\tilde G}_{k+1}$ action}
      \\ \hline  \hline
	 $\lbrack 0\rbrack $ &
	{\rm stabilize }  & \rm raise & \rm stabilize   & \rm raise
\\	
	$\lbrack D\rbrack $ & \rm stabilize & \rm raise & \rm raise & \rm stabilize
	 \\
       $\lbrack 0^*\rbrack $ & \rm raise &\rm stabilize &\rm stabilize & \rm raise
          \\ 
	$\lbrack D^*\rbrack $ & \rm raise & \rm stabilize & \rm raise & \rm stabilize
		\end{tabular}}
\medskip

\end{proposition}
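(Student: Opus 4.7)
The plan is to derive the entire proposition as a direct combinatorial consequence of Lemma \ref{lem:nv}, which already controls how each alternating element moves the eigenspaces $E_jV$ and $E^*_jV$ of $A$ and $A^*$. The flags in Definition \ref{lem:fourflag} are nothing but partial sums of these eigenspaces, so once we know the ``spread'' of each alternating element on eigenspaces, the stabilize/raise behaviour on the flags falls out by summation.

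More concretely, for each of the sixteen entries in the table, I would proceed as follows. Fix $k \in \mathbb N$ and $0 \le i \le d$, and let $F_i$ denote the $i$-th component of the flag in question. Write $F_i$ as the sum of the relevant eigenspaces $E_jV$ (for the flags $[0]$, $[D]$) or $E^*_jV$ (for the flags $[0^*]$, $[D^*]$), and then apply the appropriate inclusion from Lemma \ref{lem:nv} to each summand. For example, for the entry ``$G_{k+1}$ stabilizes $[0]$'' one writes
\begin{align*}
G_{k+1} F_i = \sum_{j=0}^{i} G_{k+1}\, E_jV \subseteq \sum_{j=0}^{i} (E_{j-1}V + E_jV) \subseteq E_0V+\cdots+E_iV = F_i,
\end{align*}
using Lemma \ref{lem:nv}(iii) and the convention $E_{-1}=0$. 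The ``raise'' entries are verified identically, one simply observes that the eigenspace spread picks up one extra index, pushing $F_i$ into $F_{i+1}$. For the flags $[D]$ and $[D^*]$, the ranges of summation become $d-i \le j \le d$ and the conventions $E_{d+1}=0$, $E^*_{d+1}=0$ absorb the boundary cases.

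I would organize the argument as a single table/case analysis rather than a long sequence of identical computations: one paragraph per alternating element, in which the inclusion from Lemma \ref{lem:nv} is quoted once and the four flag assertions are read off. The choice of ``stabilize'' vs.\ ``raise'' for a given flag is determined by comparing the direction in which the index shifts (Lemma \ref{lem:nv} says $W_{-k}$ does not shift the $E_jV$ index and shifts the $E^*_jV$ index by $\pm 1$, and so on) with the direction in which the flag grows.

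There is no real obstacle here; the only thing that requires a pinch of care is bookkeeping for the two ``downward'' flags $[D]$ and $[D^*]$, where the summation is over $j \ge d-i$ and one must check that shifted indices still lie in the range $\{d-i-1,\dots,d\}$ (for ``raise'') or $\{d-i,\dots,d\}$ (for ``stabilize''). This boils down to the trivial observation that if $j \ge d-i$ then $j-1 \ge d-(i+1)$ and $j+1 \ge d-i$, with the out-of-range terms vanishing by the boundary convention $E_{d+1}=0=E^*_{d+1}$ recorded in Lemma \ref{lem:nv}.
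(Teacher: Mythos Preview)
Your proposal is correct and follows essentially the same route as the paper. The paper's proof reads ``Use Lemmas \ref{lem:RR}--\ref{lem:SS} along with Lemma \ref{lem:nv}'': it applies Lemma \ref{lem:nv} to the eigenspace decompositions $\{E_iV\}_{i=0}^d$ and $\{E^*_iV\}_{i=0}^d$, and then invokes Lemmas \ref{lem:RR}--\ref{lem:SS} (which package exactly the summation argument you wrote out) to translate the resulting inclusions into stabilize/raise statements about the induced flags $[0],[D],[0^*],[D^*]$. Your direct summation is simply the $(\mathrm{ii})\Rightarrow(\mathrm{i})$ direction of those lemmas, unpacked in place.
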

\begin{proof}  Use Lemmas \ref{lem:RR}--\ref{lem:SS} along with Lemma \ref{lem:nv}.
\end{proof}

\noindent We now give our first main result.
\begin{theorem}
\label{thm:WWaction} We refer to the $U^+_q$-module $V$ from Corollary \ref{cor:modU}.
Let $\lbrace \mathcal V_i \rbrace_{i=0}^d$ 
denote a decomposition
of $V$ from Example \ref{thm:sixdecp}.
Then for $k \in \mathbb N$ and $0 \leq i \leq d$
the actions of $W_{-k}$ and $W_{k+1}$ on $\mathcal V_i$ are described in the table below.

\bigskip

\centerline{
\begin{tabular}[t]{c|c|c}
      {\rm decomp. name} &{\rm action of $W_{-k}$ on $\mathcal V_i$} & {\rm action of $W_{k+1}$ on $\mathcal V_i$}
      \\ \hline  \hline
	$\lbrack 0,D\rbrack $ &
	$ W_{-k} \mathcal V_i\subseteq \mathcal V_i$  & 
	$W_{k+1} \mathcal V_i \subseteq \mathcal V_{i-1}+ \mathcal V_i+\mathcal V_{i+1}$ 
\\	
	$\lbrack 0^*,D^*\rbrack $ &
	$W_{-k} \mathcal V_i \subseteq \mathcal V_{i-1}+ \mathcal V_i+\mathcal V_{i+1}$  &
	$ W_{k+1}\mathcal V_i\subseteq \mathcal V_i$   \\
      $\lbrack 0^*,0\rbrack $ & 
	$W_{-k} \mathcal V_i\subseteq \mathcal V_i + \mathcal V_{i+1}$ &
	$W_{k+1} \mathcal V_i \subseteq \mathcal V_{i-1}+\mathcal V_i$   \\ 
       $\lbrack 0^*,D\rbrack $ &
        $W_{-k} \mathcal V_i\subseteq \mathcal V_i+\mathcal V_{i+1}$ &
	$W_{k+1} \mathcal V_i \subseteq \mathcal V_{i-1}+\mathcal V_i$   \\ 
       $\lbrack D^*,0\rbrack $ & 
	$W_{-k} \mathcal V_i\subseteq \mathcal V_i +  \mathcal V_{i+1}$ &
	$W_{k+1} \mathcal V_i \subseteq \mathcal V_{i-1}+\mathcal V_i$   \\ 
	$\lbrack D^*,D\rbrack $ &
	$W_{-k} \mathcal V_i\subseteq \mathcal V_i + \mathcal V_{i+1}$ &
	$W_{k+1} \mathcal V_i \subseteq \mathcal V_{i-1}+\mathcal V_i$  
	\end{tabular}}
\medskip

\end{theorem}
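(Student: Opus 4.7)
The plan is to reduce Theorem \ref{thm:WWaction} to a routine bookkeeping exercise that combines Proposition \ref{lem:flagAct} with the four translation lemmas \ref{lem:RR}, \ref{lem:RS}, \ref{lem:SR}, \ref{lem:SS}. The key observation is that each decomposition $\lbrack u,v\rbrack$ in Example \ref{thm:sixdecp} is, by its very construction, the unique decomposition inducing the flag $\lbrack u\rbrack$ and whose inversion induces the flag $\lbrack v\rbrack$. So in the notation of Assumption \ref{lem:ass}, when we work with the decomposition $\lbrack u,v\rbrack$, the flag $\lbrace F_i\rbrace_{i=0}^d$ is $\lbrack u\rbrack$ and the flag $\lbrace F'_i\rbrace_{i=0}^d$ is $\lbrack v\rbrack$.

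Given this setup, I would fix $k \in \mathbb N$ and an alternating element $\psi \in \lbrace W_{-k}, W_{k+1}\rbrace$, and proceed case by case over the six decompositions. For each case, I would consult the table in Proposition \ref{lem:flagAct} to determine, for both $u$ and $v$, whether $\psi$ stabilizes or raises the flag $\lbrack u\rbrack$ and $\lbrack v \rbrack$. The matching translation lemma then yields the desired containment:
\begin{itemize}
\item[(RR)] if $\psi$ raises both $\lbrack u\rbrack$ and $\lbrack v\rbrack$, Lemma \ref{lem:RR} gives $\psi \mathcal V_i \subseteq \mathcal V_{i-1}+\mathcal V_i+\mathcal V_{i+1}$;
\item[(RS)] if $\psi$ raises $\lbrack u\rbrack$ and stabilizes $\lbrack v\rbrack$, Lemma \ref{lem:RS} gives $\psi \mathcal V_i \subseteq \mathcal V_i+\mathcal V_{i+1}$;
\item[(SR)] if $\psi$ stabilizes $\lbrack u\rbrack$ and raises $\lbrack v\rbrack$, Lemma \ref{lem:SR} gives $\psi \mathcal V_i \subseteq \mathcal V_{i-1}+\mathcal V_i$;
\item[(SS)] if $\psi$ stabilizes both $\lbrack u\rbrack$ and $\lbrack v\rbrack$, Lemma \ref{lem:SS} gives $\psi \mathcal V_i \subseteq \mathcal V_i$.
\end{itemize}

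Carrying out the twelve cases is immediate: for instance, on $\lbrack 0,D\rbrack$ Proposition \ref{lem:flagAct} says $W_{-k}$ stabilizes both $\lbrack 0\rbrack$ and $\lbrack D\rbrack$ (case SS) and $W_{k+1}$ raises both (case RR); on $\lbrack 0^*,D^*\rbrack$ the roles swap; on each of the four ``mixed'' decompositions $\lbrack 0^*,0\rbrack$, $\lbrack 0^*,D\rbrack$, $\lbrack D^*,0\rbrack$, $\lbrack D^*,D\rbrack$, Proposition \ref{lem:flagAct} shows that $W_{-k}$ raises the starred flag and stabilizes the unstarred one (case RS), while $W_{k+1}$ does the opposite (case SR). Reading off the conclusions produces exactly the six rows of the table.

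Since the hard lifting has already been done in Proposition \ref{lem:flagAct} (where the $q$-Serre structure is used through Lemmas \ref{lem:EWE} and \ref{lem:nv}), there is no genuine obstacle in this theorem; the only thing to be careful about is correctly identifying which of $u,v$ plays the role of the inducing flag so that the ``$+\mathcal V_{i+1}$'' versus ``$+\mathcal V_{i-1}$'' alternatives are not interchanged. The very same template will apply verbatim to Theorem \ref{thm:GGaction} once Proposition \ref{lem:flagAct} is invoked for $G_{k+1}$ and $\tilde G_{k+1}$.
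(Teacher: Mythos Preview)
Your proposal is correct and follows essentially the same approach as the paper: the paper also invokes Proposition \ref{lem:flagAct} together with Lemmas \ref{lem:RS}, \ref{lem:SR} for the four split decompositions, the only cosmetic difference being that for $\lbrack 0,D\rbrack$ and $\lbrack 0^*,D^*\rbrack$ the paper cites Lemma \ref{lem:nv}(i),(ii) directly rather than routing through Lemmas \ref{lem:SS}, \ref{lem:RR} as you do. Your uniform treatment is equally valid, since Proposition \ref{lem:flagAct} was itself derived from Lemma \ref{lem:nv} via the (ii)$\Rightarrow$(i) direction of those same lemmas.
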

\begin{proof} For the decompositions $\lbrack 0,D\rbrack$ and $\lbrack 0^*, D^*\rbrack$ use
Lemma \ref{lem:nv}(i),(ii). 
For the remaining four decompositions, use Lemmas \ref{lem:RS}, \ref{lem:SR} and Proposition \ref{lem:flagAct}.
\end{proof}
\noindent Next we use the tetrahedron diagram to illustrate Theorem \ref{thm:WWaction}. 
\medskip
\noindent
Pick $k \in \mathbb N$. The following picture shows how $W_{-k}$ acts on the  decompositions of $V$ from the tetrahedron diagram, for $d=8$:

\begin{center}
\begin{picture}(100,80)

\put(-50,50){\line(1,0){50}}
\put(0,50){\line(1,0){50}}

\put(50,50){\line(0,-1){50}}
\put(50,0){\line(0,-1){50}}

\put(50,-50){\line(-1,0){50}}
\put(0,-50){\line(-1,0){50}}

\put(-50,-50){\line(0,1){50}}
\put(-50,0){\line(0,1){50}}

\put(0,0){\line(-1,1){25}}
\put(-25,25){\line(-1,1){25}}
\put(0,0){\line(1,-1){25}}
\put(25,-25){\line(1,-1){25}}

\put(3,3){\line(1,1){22}}
\put(25,25){\line(1,1){25}}
\put(-3,-3){\line(-1,-1){22}}
\put(-25,-25){\line(-1,-1){25}}

\put(-60,60){$E^*_0V$}
\put(40,60){$E^*_dV$}
\put(-60,-65){$E_0V$}
\put(40,-65){$E_dV$}
\put(-150,0){ $W_{-k}$ action:}
\put(-52.5,47){$\bullet$}
\put(-40,47){$\bullet$}
\put(-27.5,47){$\bullet$}
\put(-15,47){$\bullet$}
\put(-2.5,47){$\bullet$}

\put(47,47){$\bullet$}
\put(34.5,47){$\bullet$}
\put(22,47){$\bullet$}
\put(9.5,47){$\bullet$}

\put(-12.5,50){\circle{9}}
\put(0,50){\circle{9}}
\put(0,50){\circle{12}}
\put(12.5,50){\circle{9}}
\put(-52.5,-53){$\bullet$}
\put(-40,-53){$\bullet$}
\put(-27.5,-53){$\bullet$}
\put(-15,-53){$\bullet$}
\put(-2.5,-53){$\bullet$}

\put(47.5,-53){$\bullet$}
\put(35,-53){$\bullet$}
\put(22.5,-53){$\bullet$}
\put(10,-53){$\bullet$}

\put(0,-50){\circle{9}}
\put(0,-50){\circle{12}}
\put(-52.5,34.5){$\bullet$}
\put(-52.5,22){$\bullet$}
\put(-52.5,9.5){$\bullet$}
\put(-52.5,-3){$\bullet$}
\put(-52.5,-15.5){$\bullet$}
\put(-52.5,-28){$\bullet$}
\put(-52.5,-40.5){$\bullet$}

\put(-50,12.5){\circle{9}}
\put(-50,12.5){\circle{12}}
\put(-50,0){\circle{9}}
\put(50,12.5){\circle{9}}
\put(50,12.5){\circle{12}}
\put(50,0){\circle{9}}

\put(-25,-25){\circle{9}}
\put(-12.5,-12.5){\circle{12}}
\put(-12.5,-12.5){\circle{9}}
\put(25,-25){\circle{9}}
\put(12.5,-12.5){\circle{12}}
\put(12.5,-12.5){\circle{9}}

\put(47.5,34.5){$\bullet$}
\put(47.5,22){$\bullet$}
\put(47.5,9.5){$\bullet$}
\put(47.5,-3){$\bullet$}
\put(47.5,-15.5){$\bullet$}
\put(47.5,-28){$\bullet$}
\put(47.5,-40.5){$\bullet$}

\put(-40,34.5){$\bullet$}
\put(-27.5,22){$\bullet$}
\put(-15,9.5){$\bullet$}
\put(-2.5,-3){$\bullet$}

\put(35,-40.5){$\bullet$}
\put(22.5,-28){$\bullet$}
\put(10,-15.5){$\bullet$}

\put(35,34.5){$\bullet$}
\put(22.5,22){$\bullet$}
\put(10,9.5){$\bullet$}
\put(-15,-15.5){$\bullet$}
\put(-27.5,-28){$\bullet$}
\put(-40,-40.5){$\bullet$}

\end{picture}
\end{center}

\vspace{2.5cm}

\noindent The following picture shows how $W_{k+1}$ acts on the  decompositions of $V$ from the tetrahedron diagram,  for $d=8$:
\begin{center}

\begin{picture}(100,80)

\put(-50,50){\line(1,0){50}}
\put(0,50){\line(1,0){50}}

\put(50,50){\line(0,-1){50}}
\put(50,0){\line(0,-1){50}}

\put(50,-50){\line(-1,0){50}}
\put(0,-50){\line(-1,0){50}}

\put(-50,-50){\line(0,1){50}}
\put(-50,0){\line(0,1){50}}

\put(0,0){\line(-1,1){25}}
\put(-25,25){\line(-1,1){25}}
\put(0,0){\line(1,-1){25}}
\put(25,-25){\line(1,-1){25}}

\put(3,3){\line(1,1){22}}
\put(25,25){\line(1,1){25}}
\put(-3,-3){\line(-1,-1){22}}
\put(-25,-25){\line(-1,-1){25}}

\put(-60,60){$E^*_0V$}
\put(40,60){$E^*_dV$}
\put(-60,-65){$E_0V$}
\put(40,-65){$E_dV$}
\put(-150,0){ $W_{k+1}$ action:}
\put(-52.5,47){$\bullet$}
\put(-40,47){$\bullet$}
\put(-27.5,47){$\bullet$}
\put(-15,47){$\bullet$}
\put(-2.5,47){$\bullet$}

\put(47,47){$\bullet$}
\put(34.5,47){$\bullet$}
\put(22,47){$\bullet$}
\put(9.5,47){$\bullet$}

\put(-12.5,-50){\circle{9}}
\put(0,-50){\circle{9}}
\put(0,-50){\circle{12}}
\put(12.5,-50){\circle{9}}
\put(-52.5,-53){$\bullet$}
\put(-40,-53){$\bullet$}
\put(-27.5,-53){$\bullet$}
\put(-15,-53){$\bullet$}
\put(-2.5,-53){$\bullet$}

\put(47.5,-53){$\bullet$}
\put(35,-53){$\bullet$}
\put(22.5,-53){$\bullet$}
\put(10,-53){$\bullet$}

\put(0,50){\circle{9}}
\put(0,50){\circle{12}}
\put(-52.5,34.5){$\bullet$}
\put(-52.5,22){$\bullet$}
\put(-52.5,9.5){$\bullet$}
\put(-52.5,-3){$\bullet$}
\put(-52.5,-15.5){$\bullet$}
\put(-52.5,-28){$\bullet$}
\put(-52.5,-40.5){$\bullet$}

\put(-50,-12.5){\circle{9}}
\put(-50,-12.5){\circle{12}}
\put(-50,0){\circle{9}}
\put(50,-12.5){\circle{9}}
\put(50,-12.5){\circle{12}}
\put(50,0){\circle{9}}

\put(-25,25){\circle{9}}
\put(-12.5,12.5){\circle{12}}
\put(-12.5,12.5){\circle{9}}
\put(25,25){\circle{9}}
\put(12.5,12.5){\circle{12}}
\put(12.5,12.5){\circle{9}}

\put(47.5,34.5){$\bullet$}
\put(47.5,22){$\bullet$}
\put(47.5,9.5){$\bullet$}
\put(47.5,-3){$\bullet$}
\put(47.5,-15.5){$\bullet$}
\put(47.5,-28){$\bullet$}
\put(47.5,-40.5){$\bullet$}

\put(-40,34.5){$\bullet$}
\put(-27.5,22){$\bullet$}
\put(-15,9.5){$\bullet$}
\put(-2.5,-3){$\bullet$}

\put(35,-40.5){$\bullet$}
\put(22.5,-28){$\bullet$}
\put(10,-15.5){$\bullet$}

\put(35,34.5){$\bullet$}
\put(22.5,22){$\bullet$}
\put(10,9.5){$\bullet$}
\put(-15,-15.5){$\bullet$}
\put(-27.5,-28){$\bullet$}
\put(-40,-40.5){$\bullet$}

\end{picture}
\end{center}
\vspace{2.5cm}

\noindent We now give our second main result.
\begin{theorem}
\label{thm:GGaction}  We refer to the $U^+_q$-module $V$ from Corollary \ref{cor:modU}.
Let $\lbrace \mathcal V_i \rbrace_{i=0}^d$ 
denote a decomposition
of $V$ from  Example \ref{thm:sixdecp}.
Then for $k \in \mathbb N$ and  $0 \leq i \leq d$
the actions of $G_{k+1} $ and $\tilde G_{k+1}$ on $\mathcal V_i$ are described in the table below.

\bigskip

\centerline{
\begin{tabular}[t]{c|c|c}
      {\rm decomp. name} &{\rm action of $G_{k+1}$ on $\mathcal V_i$} & {\rm action of $\tilde G_{k+1}$ on $\mathcal V_i$}
      \\ \hline  \hline
	$\lbrack 0,D\rbrack $ &
	$ G_{k+1} \mathcal V_i\subseteq \mathcal V_{i-1} +\mathcal V_i$  & 
	$\tilde G_{k+1} \mathcal V_i \subseteq \mathcal V_i+\mathcal V_{i+1}$ 
\\	
	$\lbrack 0^*,D^*\rbrack $ &
	$ G_{k+1} \mathcal V_i \subseteq \mathcal V_{i-1}+\mathcal V_i$  &
	$ \tilde G_{k+1}\mathcal V_i\subseteq \mathcal V_i + \mathcal V_{i+1} $   \\
       $\lbrack 0^*,0\rbrack $ & 
	$G_{k+1} \mathcal V_i\subseteq \mathcal V_i$ &
	$\tilde G_{k+1} \mathcal V_i \subseteq \mathcal V_{i-1}+ \mathcal V_i + \mathcal V_{i+1} $   \\ 
       $\lbrack 0^*,D\rbrack $ &
        $G_{k+1} \mathcal V_i\subseteq \mathcal V_{i-1}+\mathcal V_i$ &
	$\tilde G_{k+1} \mathcal V_i \subseteq \mathcal V_i + \mathcal V_{i+1}$   \\ 
	       $\lbrack D^*,0\rbrack $ & 
	$G_{k+1} \mathcal V_i\subseteq \mathcal V_i + \mathcal V_{i+1}$ &
	$\tilde G_{k+1} \mathcal V_i \subseteq \mathcal V_{i-1}+\mathcal V_i$   \\ 
	$\lbrack D^*,D\rbrack $ &
	$G_{k+1} \mathcal V_i\subseteq \mathcal V_{i-1} + \mathcal V_i + \mathcal V_{i+1}$ &
	$\tilde G_{k+1} \mathcal V_i \subseteq \mathcal V_i$  
	\end{tabular}}
\medskip

\end{theorem}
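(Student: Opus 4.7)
The plan is to run the argument of Theorem \ref{thm:WWaction} essentially verbatim, with $G_{k+1}$ and $\tilde G_{k+1}$ in place of $W_{-k}$ and $W_{k+1}$. The key inputs are already in hand: Lemma \ref{lem:nv}(iii),(iv) describes how $G_{k+1}$ and $\tilde G_{k+1}$ act on the eigenspace decompositions $\{E_iV\}_{i=0}^d$ and $\{E^*_iV\}_{i=0}^d$, and Proposition \ref{lem:flagAct} records how each of these elements raises or stabilizes each of the four flags of Definition \ref{lem:fourflag}. The bridge between ``raise/stabilize'' statements about flags and ``shift by at most one'' statements about decomposition components is supplied by Lemmas \ref{lem:RR}, \ref{lem:RS}, \ref{lem:SR}, \ref{lem:SS}.

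First I would dispose of the two rows $[0,D]$ and $[0^*,D^*]$ by direct quotation of Lemma \ref{lem:nv}(iii),(iv). Next I would handle the four ``split'' decompositions $[u,v]$ ($u\in\{0^*,D^*\}$, $v\in\{0,D\}$). For each such decomposition, the flag $[u]$ is induced by $\mathcal V_0+\cdots+\mathcal V_i$ and the flag $[v]$ is induced by the inverted sequence $\mathcal V_d+\cdots+\mathcal V_{d-i}$; this matches the setup of Assumption \ref{lem:ass}. I would then read off from Proposition \ref{lem:flagAct} whether $G_{k+1}$ (resp.\ $\tilde G_{k+1}$) raises or stabilizes each of the two relevant flags, and invoke the appropriate one of Lemmas \ref{lem:RR}--\ref{lem:SS} to get the containment in the table. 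For instance, on the decomposition $[D^*,D]$, the element $G_{k+1}$ raises both $[D^*]$ and $[D]$, so Lemma \ref{lem:RR} yields $G_{k+1}\mathcal V_i\subseteq\mathcal V_{i-1}+\mathcal V_i+\mathcal V_{i+1}$, while $\tilde G_{k+1}$ stabilizes both flags, so Lemma \ref{lem:SS} yields $\tilde G_{k+1}\mathcal V_i\subseteq\mathcal V_i$. The remaining rows follow by the same bookkeeping.

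There is no real obstacle; the only thing to be careful about is matching the flag names $[0],[D],[0^*],[D^*]$ on the row of the table with the correct flag $F_i$ or $F'_i$ of Assumption \ref{lem:ass} (depending on whether it corresponds to the running sum or to the reverse sum of the decomposition), and then selecting the right lemma among \ref{lem:RR}--\ref{lem:SS} according to the raise/stabilize pattern given by Proposition \ref{lem:flagAct}. Once this dictionary is set up, each of the twelve entries of the table is a one-line verification, exactly parallel to the proof of Theorem \ref{thm:WWaction}.
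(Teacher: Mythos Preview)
Your proposal is correct and matches the paper's own proof essentially verbatim: the paper handles $[0,D]$ and $[0^*,D^*]$ by Lemma \ref{lem:nv}(iii),(iv), and the remaining four decompositions by combining Proposition \ref{lem:flagAct} with Lemmas \ref{lem:RR}--\ref{lem:SS}. Your worked example for $[D^*,D]$ and the care you take in matching flags to the roles of $F$ and $F'$ in Assumption \ref{lem:ass} are exactly the right bookkeeping.
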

\begin{proof}
For the decompositions $\lbrack 0,D\rbrack$ and $\lbrack 0^*, D^*\rbrack$ use
Lemma \ref{lem:nv}(iii),(iv). For the remaining four decompositions,
use Lemmas \ref{lem:RR}--\ref{lem:SS} and Proposition \ref{lem:flagAct}.
\end{proof}

\noindent Next we use the tetrahedron diagram to illustrate Theorem \ref{thm:GGaction}. 
\medskip

\noindent
Pick $k \in \mathbb N$. The following picture shows how $G_{k+1}$ acts on the  decompositions of $V$ from the tetrahedron diagram, for $d=8$:

\begin{center}
\begin{picture}(100,80)

\put(-50,50){\line(1,0){50}}
\put(0,50){\line(1,0){50}}

\put(50,50){\line(0,-1){50}}
\put(50,0){\line(0,-1){50}}

\put(50,-50){\line(-1,0){50}}
\put(0,-50){\line(-1,0){50}}

\put(-50,-50){\line(0,1){50}}
\put(-50,0){\line(0,1){50}}

\put(0,0){\line(-1,1){25}}
\put(-25,25){\line(-1,1){25}}
\put(0,0){\line(1,-1){25}}
\put(25,-25){\line(1,-1){25}}

\put(3,3){\line(1,1){22}}
\put(25,25){\line(1,1){25}}
\put(-3,-3){\line(-1,-1){22}}
\put(-25,-25){\line(-1,-1){25}}

\put(-60,60){$E^*_0V$}
\put(40,60){$E^*_dV$}
\put(-60,-65){$E_0V$}
\put(40,-65){$E_dV$}
\put(-150,0){ $G_{k+1}$ action:}
\put(-52.5,47){$\bullet$}
\put(-40,47){$\bullet$}
\put(-27.5,47){$\bullet$}
\put(-15,47){$\bullet$}
\put(-2.5,47){$\bullet$}

\put(47,47){$\bullet$}
\put(34.5,47){$\bullet$}
\put(22,47){$\bullet$}
\put(9.5,47){$\bullet$}

\put(0,50){\circle{9}}
\put(12.5,50){\circle{12}}
\put(12.5,50){\circle{9}}
\put(-52.5,-53){$\bullet$}
\put(-40,-53){$\bullet$}
\put(-27.5,-53){$\bullet$}
\put(-15,-53){$\bullet$}
\put(-2.5,-53){$\bullet$}

\put(47.5,-53){$\bullet$}
\put(35,-53){$\bullet$}
\put(22.5,-53){$\bullet$}
\put(10,-53){$\bullet$}

\put(0,-50){\circle{9}}
\put(12.5,-50){\circle{12}}
\put(12.5,-50){\circle{9}}
\put(-52.5,34.5){$\bullet$}
\put(-52.5,22){$\bullet$}
\put(-52.5,9.5){$\bullet$}
\put(-52.5,-3){$\bullet$}
\put(-52.5,-15.5){$\bullet$}
\put(-52.5,-28){$\bullet$}
\put(-52.5,-40.5){$\bullet$}

\put(-50,0){\circle{12}}
\put(-50,0){\circle{9}}
\put(50,12.5){\circle{9}}
\put(50,0){\circle{12}}
\put(50,0){\circle{9}}
\put(50,-12.5){\circle{9}}

\put(-25,-25){\circle{9}}
\put(-12.5,-12.5){\circle{12}}
\put(-12.5,-12.5){\circle{9}}
\put(-25,25){\circle{9}}
\put(-12.5,12.5){\circle{12}}
\put(-12.5, 12.5){\circle{9}}

\put(47.5,34.5){$\bullet$}
\put(47.5,22){$\bullet$}
\put(47.5,9.5){$\bullet$}
\put(47.5,-3){$\bullet$}
\put(47.5,-15.5){$\bullet$}
\put(47.5,-28){$\bullet$}
\put(47.5,-40.5){$\bullet$}

\put(-40,34.5){$\bullet$}
\put(-27.5,22){$\bullet$}
\put(-15,9.5){$\bullet$}
\put(-2.5,-3){$\bullet$}

\put(35,-40.5){$\bullet$}
\put(22.5,-28){$\bullet$}
\put(10,-15.5){$\bullet$}

\put(35,34.5){$\bullet$}
\put(22.5,22){$\bullet$}
\put(10,9.5){$\bullet$}
\put(-15,-15.5){$\bullet$}
\put(-27.5,-28){$\bullet$}
\put(-40,-40.5){$\bullet$}

\end{picture}
\end{center}

\vspace{2.5cm}

\noindent The following picture shows how ${\tilde G}_{k+1}$ acts on the  decompositions of $V$ from the tetrahedron diagram,  for $d=8$:
\begin{center}

\begin{picture}(100,80)

\put(-50,50){\line(1,0){50}}
\put(0,50){\line(1,0){50}}

\put(50,50){\line(0,-1){50}}
\put(50,0){\line(0,-1){50}}

\put(50,-50){\line(-1,0){50}}
\put(0,-50){\line(-1,0){50}}

\put(-50,-50){\line(0,1){50}}
\put(-50,0){\line(0,1){50}}

\put(0,0){\line(-1,1){25}}
\put(-25,25){\line(-1,1){25}}
\put(0,0){\line(1,-1){25}}
\put(25,-25){\line(1,-1){25}}

\put(3,3){\line(1,1){22}}
\put(25,25){\line(1,1){25}}
\put(-3,-3){\line(-1,-1){22}}
\put(-25,-25){\line(-1,-1){25}}

\put(-60,60){$E^*_0V$}
\put(40,60){$E^*_dV$}
\put(-60,-65){$E_0V$}
\put(40,-65){$E_dV$}
\put(-150,0){ ${\tilde G}_{k+1}$ action:}
\put(-52.5,47){$\bullet$}
\put(-40,47){$\bullet$}
\put(-27.5,47){$\bullet$}
\put(-15,47){$\bullet$}
\put(-2.5,47){$\bullet$}

\put(47,47){$\bullet$}
\put(34.5,47){$\bullet$}
\put(22,47){$\bullet$}
\put(9.5,47){$\bullet$}

\put(-12.5,-50){\circle{9}}
\put(0,-50){\circle{9}}
\put(-12.5,-50){\circle{12}}
\put(-52.5,-53){$\bullet$}
\put(-40,-53){$\bullet$}
\put(-27.5,-53){$\bullet$}
\put(-15,-53){$\bullet$}
\put(-2.5,-53){$\bullet$}

\put(47.5,-53){$\bullet$}
\put(35,-53){$\bullet$}
\put(22.5,-53){$\bullet$}
\put(10,-53){$\bullet$}

\put(0,50){\circle{9}}
\put(-12.5,50){\circle{12}}
\put(-12.5,50){\circle{9}}
\put(-52.5,34.5){$\bullet$}
\put(-52.5,22){$\bullet$}
\put(-52.5,9.5){$\bullet$}
\put(-52.5,-3){$\bullet$}
\put(-52.5,-15.5){$\bullet$}
\put(-52.5,-28){$\bullet$}
\put(-52.5,-40.5){$\bullet$}

\put(-50,12.5){\circle{9}}
\put(-50,0){\circle{12}}
\put(-50,0){\circle{9}}
\put(-50,-12.5){\circle{9}}
\put(50,0){\circle{12}}
\put(50,0){\circle{9}}

\put(25,-25){\circle{9}}
\put(12.5,-12.5){\circle{12}}
\put(12.5,-12.5){\circle{9}}
\put(25,25){\circle{9}}
\put(12.5,12.5){\circle{12}}
\put(12.5,12.5){\circle{9}}

\put(47.5,34.5){$\bullet$}
\put(47.5,22){$\bullet$}
\put(47.5,9.5){$\bullet$}
\put(47.5,-3){$\bullet$}
\put(47.5,-15.5){$\bullet$}
\put(47.5,-28){$\bullet$}
\put(47.5,-40.5){$\bullet$}

\put(-40,34.5){$\bullet$}
\put(-27.5,22){$\bullet$}
\put(-15,9.5){$\bullet$}
\put(-2.5,-3){$\bullet$}

\put(35,-40.5){$\bullet$}
\put(22.5,-28){$\bullet$}
\put(10,-15.5){$\bullet$}

\put(35,34.5){$\bullet$}
\put(22.5,22){$\bullet$}
\put(10,9.5){$\bullet$}
\put(-15,-15.5){$\bullet$}
\put(-27.5,-28){$\bullet$}
\put(-40,-40.5){$\bullet$}

\end{picture}
\end{center}
\vspace{2.5cm}

\noindent Next, we seek an attractive basis  for $V$. Motivated by Lemma \ref{lem:nrel2}, we seek:
\begin{enumerate}
\item[\rm (i)] a basis of common eigenvectors for $\lbrace W_{-k} \rbrace_{k\in \mathbb N}$;
\item[\rm (ii)] a basis of common eigenvectors for $\lbrace W_{k+1} \rbrace_{k\in \mathbb N}$;
\item[\rm (iii)] a basis of common eigenvectors for $\lbrace G_{k+1} \rbrace_{k\in \mathbb N}$;
\item[\rm (iv)] a basis of common eigenvectors for $\lbrace {\tilde G}_{k+1} \rbrace_{k\in \mathbb N}$.
\end{enumerate}
Unfortunately, the above bases might not exist.  In Sections 10, 11 we will consider some TD systems of $q$-Serre type for which
the above bases do exist. The following result is used in Section 11.

\begin{lemma} \label{lem:assume} Assume that $V$ has a basis of type {\rm (i)} or {\rm (ii)}.
Then $V$ has a basis of type {\rm (iii)} and a basis of type {\rm (iv)}.
\end{lemma}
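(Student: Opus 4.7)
My plan is to first reduce the problem by an automorphism symmetry, and then to construct the desired eigenbases directly by analyzing the block bidiagonal structure of $G_{k+1}$ and $\tilde G_{k+1}$ with respect to the $W_0$-eigenspace decomposition.

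First I would invoke the automorphism $\sigma$ of $U_q^+$ (Lemma~\ref{lem:nAAut}, acting as in Lemma~\ref{lem:nsigSact}(i)), which swaps $W_{-k}\leftrightarrow W_{k+1}$ and $G_k\leftrightarrow \tilde G_k$. Precomposing the $U_q^+$-action on $V$ with $\sigma$ yields a new $U_q^+$-module structure on $V$ under which the types are interchanged as (i)$\leftrightarrow$(ii) and (iii)$\leftrightarrow$(iv). So it suffices to prove that a basis of type (i) forces bases of both types (iii) and (iv), and then the implication starting from type (ii) follows by applying this result to the $\sigma$-twisted module.

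So assume $V$ has a basis of type (i). Since $[W_0,W_{-k}]=0$ by Proposition~\ref{lem:fourP}(i) and $W_0$ is diagonalizable, this basis refines to a joint eigenbasis of $\{W_0\}\cup\{W_{-k}\}_{k\in\mathbb N}$, so each $E_iV$ inherits a basis of common $\{W_{-k}\}$-eigenvectors. Since $\{G_{k+1}\}_{k\in\mathbb N}$ mutually commute (Lemma~\ref{lem:nrel2}), producing a common eigenbasis is equivalent to showing that each $G_{k+1}$ is diagonalizable. Applying $[W_0,G_{k+1}]_b=(b-1)W_{-k-1}$ (Lemma~\ref{lem:nrel1}) to $v\in E_iV$, combined with $G_{k+1}E_iV\subseteq E_{i-1}V+E_iV$ (Lemma~\ref{lem:nv}(iii)), I extract the $E_iV$-component of $G_{k+1}v$ as $\theta_i^{-1}W_{-k-1}v$. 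Setting $N_{k+1}:=G_{k+1}-W_0^{-1}W_{-k-1}$ therefore gives
\begin{align*}
G_{k+1}=W_0^{-1}W_{-k-1}+N_{k+1},\qquad N_{k+1}(E_iV)\subseteq E_{i-1}V,
\end{align*}
where the first summand is already diagonal in the refined basis. I now induct on $i$: on $E_0V$, $N_{k+1}$ vanishes and $G_{k+1}=\theta_0^{-1}W_{-k-1}$ is diagonalizable in the refined basis. For $i\geq 1$ and a refined basis element $v\in E_iV$ with $W_{-k-1}v=\mu_{k+1}v$, I seek a common $\{G_{k+1}\}$-eigenvector $\hat v=v+c$ with $c\in E_0V+\cdots+E_{i-1}V$; the candidate eigenvalue must be $\theta_i^{-1}\mu_{k+1}$, and $c$ is determined by expanding in the previously constructed common $G$-eigenbasis of $E_0V+\cdots+E_{i-1}V$ and solving a linear system whose per-$k$ compatibility is enforced by $[G_{k+1},G_{\ell+1}]=0$.

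For type (iv) I argue symmetrically: the companion relation $[\tilde G_{k+1},W_0]_b=(b-1)W_{-k-1}$ together with $\tilde G_{k+1}E_iV\subseteq E_iV+E_{i+1}V$ (Lemma~\ref{lem:nv}(iv)) yields $\tilde G_{k+1}=W_0^{-1}W_{-k-1}+\tilde N_{k+1}$ with $\tilde N_{k+1}(E_iV)\subseteq E_{i+1}V$, and the induction now runs downward from $E_dV$. The main obstacle I anticipate is the degenerate case in which a candidate $v\in E_iV$ and a previously constructed $G$-eigenvector in lower $E_jV$ share identical $\{G_{k+1}\}$-spectra: the triangular system for $c$ then has vanishing diagonal coefficient for every $k$, so solvability requires the corresponding off-diagonal entry of $N_{k+1}$ to vanish for all $k$. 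Establishing this vanishing goes beyond $\{G_{k+1}\}$-commutativity alone and must be extracted from the remaining $U_q^+$-relations, most naturally the quadratic identity of Lemma~\ref{prop:nGGWW}, together with the full simultaneous diagonalizability of $\{W_{-k}\}$ guaranteed by the hypothesis.
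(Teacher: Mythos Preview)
Your reduction via $\sigma$ is fine (it amounts to the paper's replacement of $\Phi$ by $((\Phi^*)^\downarrow)^\Downarrow$), and your computation that the $E_iV$-component of $G_{k+1}v$ equals $\theta_i^{-1}W_{-k-1}v$ is correct and is essentially the same calculation the paper performs. However, the proof is genuinely incomplete at exactly the point you flag: the eigenvalue-collision case. If a refined basis vector $v\in E_iV$ and a previously constructed $G$-eigenvector $w$ in $E_0V+\cdots+E_{i-1}V$ share the same $\{G_{k+1}\}$-spectrum, your triangular system for the correction $c$ has zero diagonal for every $k$, and nothing you have invoked forces the corresponding off-diagonal entries of $N_{k+1}$ to vanish. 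You suggest that Lemma~\ref{prop:nGGWW} might supply the missing constraint, but you do not show how; in particular, the collision condition $\mu^{(2)}_{k+1}=b^{j-i}\mu^{(1)}_{k+1}$ for all $k$ is perfectly compatible with that quadratic identity, so there is no obvious contradiction to extract. As written, the argument does not close.

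The paper avoids this difficulty by working in a different decomposition. Rather than triangularizing against $\lbrack 0,D\rbrack=\{E_iV\}$, where $G_{k+1}$ is only block-bidiagonal, it passes to the split decomposition $\lbrack 0^*,0\rbrack=\{U_i\}$, on which Theorem~\ref{thm:GGaction} shows $G_{k+1}$ is block-\emph{diagonal}. Concretely, given a common $\{W_{-k}\}$-eigenvector $v\in E_jV$ with eigenvalues $\omega_{-k}$, the paper takes $u=\Psi v\in U_{d-j}$ (with $\Psi$ from Lemma~\ref{lem:del}), so that $u-v\in U_{d-j+1}+\cdots+U_d=E_0V+\cdots+E_{j-1}V$. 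Block-diagonality gives $(G_k-\omega_0^{-1}\omega_{-k}I)u\in U_{d-j}$ automatically, while your own computation shows $(G_k-\omega_0^{-1}\omega_{-k}I)v\in E_0V+\cdots+E_{j-1}V$; combining these forces $(G_k-\omega_0^{-1}\omega_{-k}I)u=0$. The correction $u-v$ is thus supplied \emph{a priori} by $\Psi$, with no linear system to solve and no collision case to worry about. The moral: instead of trying to diagonalize a bidiagonal family by induction, change to the decomposition on which the family is already block-diagonal.
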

\begin{proof} Replacing $\Phi$ by $((\Phi^*)^\downarrow)^\Downarrow$ if necessary, we may assume that $V$
has a basis of type (i).
Let $0 \not=v \in V$ denote a common eigenvector for 
$\lbrace {W}_{-k}\rbrace_{k \in \mathbb N}$. Note that $v$ is an eigenvector for $W_0=A$, so $v \in E_jV$ for some $j$ $(0 \leq j \leq d)$.
For $k \in \mathbb N$ let $\omega_{-k}\in \mathbb F$ denote the $W_{-k}$ eigenvalue for $v$. Thus $\omega_0 = \theta_j$. 
Recall the decomposition $\lbrack 0^*, 0\rbrack$ of $V$, from Example \ref{thm:sixdecp} and Note \ref{note:u}. Denote this decomposition by
$\lbrace U_i \rbrace_{i=0}^d$.
 By Lemma \ref{lem:split}, $E_0V+\cdots + E_jV= U_{d-j}+\cdots + U_{d}$ and $E_0V+\cdots + E_{j-1}V= U_{d-j+1}+\cdots + U_{d}$.
 Define the vector $u=\Psi v$, where $\Psi$ is from Lemma \ref{lem:del}.
 By construction $u\in U_{d-j}$ and $u - v \in U_{d-j+1} + \cdots + U_d$.
We claim that for $k \in \mathbb N$, $u$ is an eigenvector for $G_{k}$ with eigenvalue $\omega_{-k} / \omega_0$. To prove the claim, let $k$ be given.
We show that $(G_k-\omega^{-1}_0 \omega_{-k} I) u = 0$. This is immediate if $k=0$, so assume that $k\geq 1$.
We have $0 = U_{d-j} \cap (U_{d-j+1} + \cdots + U_{d})$, so it suffices to show that
$(G_k-\omega^{-1}_0 \omega_{-k} I) u \in U_{d-j}$ and $(G_k-\omega^{-1}_0 \omega_{-k} I) u \in U_{d-j+1} + \cdots + U_{d}$. We have  $u \in U_{d-j}$, and
$G_k U_{d-j} \subseteq U_{d-j}$
by Theorem \ref{thm:GGaction}, so $(G_k-\omega^{-1}_0 \omega_{-k} I) u \in U_{d-j}$. For notational convenience, define $L_k = \lbrack W_0, G_k\rbrack/(b-1)$.
By the relation $\lbrack W_0, G_k \rbrack_b = (b-1)W_{-k}$ from Lemma \ref{lem:nrel1}, 
\begin{align*}
L_k = W_{-k} -W_0 G_k.
\end{align*}
By Proposition \ref{lem:fourP}(iii) we have $\lbrack W_0, L_k\rbrack_b =0$. Consequently $L_k E_jV \subseteq E_{j-1}V$, where $E_{-1}=0$.
 Note that 
 \begin{align*}
 &(G_k-\omega^{-1}_0 \omega_{-k}I) v=W_0^{-1} (W_0 G_k- W_{-k}) v=
 - W^{-1}_0 L_k v\in W^{-1}_0 L_k E_jV
 \\&\subseteq W^{-1}_0 E_{j-1}V = E_{j-1}V\subseteq  E_0V+\cdots + E_{j-1}V = U_{d-j+1} + \cdots + U_d.
 \end{align*}
We may now argue
\begin{align*}
(G_k-\omega^{-1}_0 \omega_{-k} I) u &=(G_k-\omega^{-1}_0 \omega_{-k}I) (u-v) + (G_k-\omega^{-1}_0 \omega_{-k}I) v
\\
&\in (G_k-\omega^{-1}_0 \omega_{-k} I) (U_{d-j+1}+\cdots + U_d) +  U_{d-j+1} + \cdots + U_{d}
\\
&= U_{d-j+1} + \cdots + U_{d}.
\end{align*}
\noindent We have shown that $(G_k - \omega^{-1}_0 \omega_{-k} I)u=0$, and the claim is proved.
By assumption, $V$ has a basis of type (i). We apply $\Psi$ to this basis, and get a new basis for $V$.
By the claim, the new basis has type (iii). We have shown that $V$ has a basis of type (iii).
By a similar argument, $V$ has a basis of type (iv).
\end{proof}

\section{Leonard systems of $q$-Serre type}
\noindent We continue to discuss the nontrivial TD system  $\Phi=(A;\lbrace E_i\rbrace_{i=0}^d;A^*;\lbrace E^*_i\rbrace_{i=0}^d)$ on $V$ of $q$-Serre type. Recall the $U^+_q$-module $V$ from
Corollary \ref{cor:modU}.
 Throughout this section we assume that
$\Phi$ is a Leonard system, so that
 $E_iV$ and $E^*_iV$ have dimension one for $0 \leq i \leq d$. For aesthetic reasons, we further assume that $q \in \mathbb F$ and 
\begin{align*}
 \theta_i = q^{2i-d}, \qquad \qquad \theta^*_i = q^{d-2i} \qquad \qquad (0 \leq i \leq d).
 \end{align*}
 \noindent Under the above assumptions, we describe how the alternating elements of $U^+_q$ act on $V$. 
\medskip

\noindent In \cite{qtet, evalTetq} the TD system $\Phi$ is described from the point of view of the $q$-tetrahedron algebra. We will use some results from these descriptions.
\begin{definition}\label{def:maps}
\rm Define $x_{12}=A$ and $x_{30}=A^*$. Define $x_{01} \in {\rm End}(V)$ such that for $0 \leq i \leq d$, the $i^{\rm th}$ 
component of $\lbrack D^*,D\rbrack$ is an eigenspace of $x_{01} $ with eigenvalue $q^{d-2i}$. Define $x_{23} \in {\rm End}(V)$ such that for $0 \leq i \leq d$, the $i^{\rm th}$ 
component of $\lbrack 0^*,0\rbrack$ is an eigenspace of $x_{23} $ with eigenvalue $q^{2i-d}$.
\end{definition}

\begin{lemma}\label{lem:qw} {\rm (See \cite[Theorem~10.4]{qtet}, \cite[Definition~4.1]{evalTetq}.)} We have
\begin{align*}
       & \frac{q x_{01} x_{12}- q^{-1} x_{12} x_{01}}{q-q^{-1}} = I, \qquad \qquad
           \frac{q x_{12} x_{23}- q^{-1} x_{23} x_{12}}{q-q^{-1}} = I, 
           \\
       & \frac{q x_{23} x_{30}- q^{-1} x_{30} x_{23}}{q-q^{-1}} = I, \qquad \qquad
           \frac{q x_{30} x_{01}- q^{-1} x_{01} x_{23}}{q-q^{-1}} = I.
\end{align*}
\end{lemma}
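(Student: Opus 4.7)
The plan is to verify each of the four identities by a direct calculation in an appropriate decomposition of $V$. Since $\Phi$ is a Leonard system, Lemma \ref{lem:shape} gives $\rho_i=1$ for $0\leq i\leq d$, so every component of every decomposition in Example \ref{thm:sixdecp} is spanned by a single nonzero vector, and it suffices to evaluate each identity on a basis $v_0,\dots,v_d$ aligned with the decomposition chosen for that relation.

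For each relation I would choose the decomposition on which one of the two operators acts diagonally and the other acts bidiagonally. The first and fourth identities I would handle in $\lbrack D^*,D\rbrack$: by Definition \ref{def:maps} the operator $x_{01}$ is diagonal there with $x_{01}v_i = q^{d-2i}v_i$, and by Lemma \ref{thm:aaction} the operators $x_{12}=A$ and $x_{30}=A^*$ act upper-bidiagonally and lower-bidiagonally with respective diagonal entries $\theta_i=q^{2i-d}$ and $\theta^*_{d-i}=q^{2i-d}$. The second and third identities I would handle in $\lbrack 0^*,0\rbrack$: here $x_{23}v_i = q^{2i-d}v_i$, while $x_{12}$ acts upper-bidiagonally with diagonal entry $\theta_{d-i}=q^{d-2i}$ and $x_{30}$ acts lower-bidiagonally with diagonal entry $\theta^*_i=q^{d-2i}$.

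With $v_i$ spanning the $i$-th component, each of the four computations has the same shape: if the diagonal operator sends $v_i \mapsto \alpha_iv_i$ and the bidiagonal operator sends $v_i \mapsto \beta_iv_i + c_iv_{i\pm 1}$, then $(qXY - q^{-1}YX)v_i$ equals $(q-q^{-1})\alpha_i\beta_iv_i$ plus a scalar multiple of $v_{i\pm 1}$. Two arithmetic facts then give the identity: first, $\alpha_i\beta_i = 1$ in each of the four cases because the two diagonal entries in play are $q^{d-2i}$ and $q^{2i-d}$ in some order; second, the eigenvalues of $x_{01}$ and $x_{23}$ form a geometric progression in $q^{\mp 2}$, so $\alpha_{i\pm 1}=q^{\mp 2}\alpha_i$, and together with the direction of the bidiagonal action this makes the two contributions at $v_{i\pm 1}$ in $qXY$ and $q^{-1}YX$ equal, hence they cancel.

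The principal obstacle will be bookkeeping: for each of the four relations I must track which decomposition makes both operators tractable, whether the bidiagonal action is upper or lower, and whether the diagonal entry of the bidiagonal operator at component $i$ is indexed by $i$ or by $d-i$ as dictated by Lemma \ref{thm:aaction}. Once these choices are tabulated, the four verifications are structurally identical and reduce to the two power-of-$q$ identities noted above. Alternatively, after proving one relation one could exploit the symmetries $\Phi\leftrightarrow\Phi^*$ and $\Phi\mapsto\Phi^{\downarrow\Downarrow}$ to cyclically rotate through the remaining three, but this requires first verifying how these symmetries permute $x_{01},x_{12},x_{23},x_{30}$, which appears to be more work than the direct computation.
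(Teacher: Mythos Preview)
Your argument is correct. The computations go through exactly as you describe: in each case the diagonal part contributes $(q-q^{-1})\alpha_i\beta_i=(q-q^{-1})$, and the off-diagonal coefficient of $v_{i\pm1}$ vanishes because the eigenvalue of the diagonal operator shifts by $q^{\mp2}$ along the decomposition, exactly cancelling the factors $q$ and $q^{-1}$ in the commutator.

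The paper itself does not give a proof of this lemma: it imports the relations from the $q$-tetrahedron algebra literature (the cited \cite[Theorem~10.4]{qtet} and \cite[Definition~4.1]{evalTetq}), where the $q$-Weyl relations are defining relations of the algebra and the $\Phi$-module structure on $V$ is constructed so as to satisfy them. Your route is different and more self-contained: you never invoke the $q$-tetrahedron algebra, instead taking Definition~\ref{def:maps} at face value and checking the four identities directly on $V$ using only Lemma~\ref{thm:aaction} and the assumed eigenvalue form $\theta_i=q^{2i-d}$, $\theta^*_i=q^{d-2i}$. This buys independence from the external references, at the cost of four parallel bookkeeping computations; the paper's approach trades those computations for the machinery developed in \cite{qtet,evalTetq}, which it needs anyway for Lemmas~\ref{lem:eval} and~\ref{lem:upsfour}. (In the fourth displayed relation the second $x_{23}$ is a typographical slip for $x_{30}$; your calculation in $\lbrack D^*,D\rbrack$ verifies the intended relation between $x_{30}$ and $x_{01}$.)
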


\begin{lemma} \label{lem:eval}{\rm (See   \cite[Lemma~9.4, Proposition~9.6]{evalTetq}.)} There exists a unique nonzero $\xi \in \mathbb F$ such that 
\begin{align*}
\xi (x_{01}-x_{23}) = \frac{\lbrack x_{30}, x_{12}\rbrack}{q-q^{-1}},
\qquad \qquad
\xi^{-1} (x_{12}-x_{30}) = \frac{\lbrack x_{01}, x_{23}\rbrack}{q-q^{-1}}.
\end{align*}
Moreover, $\xi$ is not among $q^{d-1}, q^{d-3},\ldots, q^{1-d}$.
\end{lemma}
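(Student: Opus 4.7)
The plan is to verify both identities by working in a basis of $V$ adapted to the split decomposition $\lbrack 0^*, 0\rbrack$, in which three of the four operators $x_{01}, x_{12}, x_{23}, x_{30}$ take particularly simple forms.

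First, I would fix a basis $\lbrace u_i\rbrace_{i=0}^d$ with $u_i$ spanning the one-dimensional $i$-th component of $\lbrack 0^*, 0\rbrack$. From Definition \ref{def:maps}, Lemma \ref{thm:aaction}, and Lemma \ref{lem:tpr}, one obtains
\begin{align*}
x_{23}u_i = q^{2i-d}u_i, \quad x_{12}u_i = q^{d-2i}u_i + \beta_i u_{i+1}, \quad x_{30}u_i = q^{d-2i}u_i + \gamma_i u_{i-1},
\end{align*}
with $\beta_d = 0 = \gamma_0$ and with $\beta_i$, $\gamma_i$ nonzero for all other indices (nonvanishing comes from Lemma \ref{lem:tpr} applied to the explicit description of the split components). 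A direct computation of $\lbrack x_{30}, x_{12}\rbrack u_i$, using $1-q^2 = -q(q-q^{-1})$, then yields
\begin{align*}
\frac{\lbrack x_{30}, x_{12} \rbrack}{q - q^{-1}}u_i = -q^{d-2i+1}\gamma_i u_{i-1} + \frac{\beta_i \gamma_{i+1} - \gamma_i \beta_{i-1}}{q-q^{-1}}u_i - q^{d-2i-1}\beta_i u_{i+1}.
\end{align*}

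Second, I would pin down $x_{01}$ in the same basis. Writing $x_{01}u_i = \sum_j c_{ji}u_j$ and imposing the two relations from Lemma \ref{lem:qw} involving $x_{01}$, namely $qx_{01}x_{12} - q^{-1}x_{12}x_{01} = (q-q^{-1})I$ and $qx_{30}x_{01} - q^{-1}x_{01}x_{30} = (q-q^{-1})I$, the scalar prefactors of the form $q^{d-2i+1} - q^{d-2j-1}$ force $c_{ji} = 0$ whenever $|i-j| > 1$; so $x_{01}$ acts tridiagonally on $\lbrace u_i\rbrace_{i=0}^d$, with the subdiagonal and superdiagonal entries expressed recursively in terms of $\gamma_i$ and $\beta_i$ together with a single free scalar appearing in the diagonal entry. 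Matching the off-diagonal entries of $\xi(x_{01} - x_{23})$ against the formula for $\lbrack x_{30}, x_{12}\rbrack/(q-q^{-1})$ from the previous step identifies this free scalar with a unique nonzero $\xi \in \mathbb F$, and simultaneously establishes the first identity; the diagonal matching then falls out of the recursion satisfied by $\beta_i\gamma_{i+1} - \gamma_i\beta_{i-1}$. Uniqueness of $\xi$ is automatic since $x_{01} \neq x_{23}$ (otherwise the tetrahedron would collapse).

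Third, for the companion identity $\xi^{-1}(x_{12}-x_{30}) = \lbrack x_{01}, x_{23}\rbrack / (q-q^{-1})$, I would either repeat the argument using a basis adapted to $\lbrack D^*, D\rbrack$ (on which $x_{01}$ plays the role previously played by $x_{23}$, while $x_{12}, x_{30}$ now become two-diagonal), or derive it algebraically by applying a suitable $q$-commutator to the first identity and reducing via Lemma \ref{lem:qw}. The scaling is forced to be the reciprocal $\xi^{-1}$ rather than an independent scalar because $\lbrack x_{01}, x_{23}\rbrack$ carries the opposite orientation to $\lbrack x_{30}, x_{12}\rbrack$ under the cyclic rotation that permutes the four $q$-Weyl relations of Lemma \ref{lem:qw}. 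Finally, to show $\xi \notin \lbrace q^{d-1}, q^{d-3}, \ldots, q^{1-d}\rbrace$, I would substitute $\xi = q^{d-1-2j}$ for some $0 \leq j \leq d-1$ into the tridiagonal recursion for $x_{01}$ from the second step and observe that it forces $\beta_j = 0$ or $\gamma_{j+1} = 0$, contradicting the nonvanishing established in the first step. The main obstacle is the second step: organizing the matching so that the single free parameter $\xi$ emerges transparently and the forbidden locus reads off cleanly, rather than getting entangled with the unknowns $\beta_i, \gamma_i$.
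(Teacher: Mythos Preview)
The paper does not prove this lemma; it imports the result from \cite{evalTetq}, where the identities are established within the representation theory of the $q$-tetrahedron algebra. So there is no in-paper argument to compare against, and your proposal is an attempt to supply one directly.

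Your outline has a concrete gap in the second step. You assert that the two $q$-Weyl relations from Lemma~\ref{lem:qw} involving $x_{01}$, when written in the $\lbrack 0^*,0\rbrack$-basis $\lbrace u_i\rbrace$, produce ``scalar prefactors of the form $q^{d-2i+1}-q^{d-2j-1}$'' that kill $c_{ji}$ for $|i-j|>1$. That reasoning would be correct if $x_{12}$ and $x_{30}$ were diagonal in this basis, but they are only bidiagonal: $x_{12}u_i=q^{d-2i}u_i+\beta_i u_{i+1}$ and $x_{30}u_i=q^{d-2i}u_i+\gamma_i u_{i-1}$. Expanding, say, $qx_{01}x_{12}-q^{-1}x_{12}x_{01}$ on $u_i$ and extracting the $u_j$-coefficient gives
\[
c_{ji}\bigl(q^{d-2i+1}-q^{d-2j-1}\bigr)+q\,\beta_i\,c_{j,i+1}-q^{-1}\beta_{j-1}\,c_{j-1,i}=(q-q^{-1})\delta_{ji},
\]
which couples $c_{ji}$ to its neighbours $c_{j,i+1}$ and $c_{j-1,i}$ rather than isolating it. The companion relation with $x_{30}$ gives an analogous three-term recurrence. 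Neither relation, nor the pair together, forces $c_{ji}=0$ for $|i-j|>1$ by the mechanism you describe; one needs additional input (for instance, the defining property that $x_{01}$ is diagonal on $\lbrack D^*,D\rbrack$, translated across the change-of-basis between the two split decompositions) to pin down the shape of $x_{01}$ on $\lbrack 0^*,0\rbrack$.

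Once this gap is acknowledged, the subsequent claims---that the entries of $x_{01}$ are determined up to a single free scalar, that matching off-diagonals identifies this scalar as $\xi$, and that the forbidden values $q^{d-1},\ldots,q^{1-d}$ read off from a degeneracy in the recursion---all rest on the same incomplete description of $x_{01}$ and would need to be reworked. The approach is not unreasonable in spirit, but as written it does not close.
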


\noindent Following \cite[Definition~9.10]{evalTetq}, define
\begin{align*}
\Upsilon = (q^{d+1}+q^{-d-1})I.
\end{align*}

\begin{lemma} 
\label{lem:upsfour} {\rm (See \cite[Lemma~9.11]{evalTetq}.)}
We have
\begin{align*}
&
\Upsilon = \xi(x_{01}x_{23}-I)+qx_{30}+q^{-1}x_{12},
 \qquad 
\Upsilon = \xi^{-1}(x_{12}x_{30}-I)+qx_{01}+q^{-1}x_{23}, 
\\
&
\Upsilon = \xi (x_{23}x_{01}-I)+qx_{12}+q^{-1}x_{30},
 \qquad 
\Upsilon = \xi^{-1}(x_{30}x_{12}-I)+qx_{23}+q^{-1}x_{01}.
\end{align*}
\end{lemma}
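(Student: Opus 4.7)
The plan is to prove the first identity,
\[\Upsilon = \xi(x_{01}x_{23}-I) + qx_{30} + q^{-1}x_{12},\]
and deduce the remaining three by algebraic manipulation. Set $\Psi := \xi(x_{01}x_{23}-I) + qx_{30} + q^{-1}x_{12}$; the goal is to show $\Psi = (q^{d+1}+q^{-d-1})I$.

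First I would verify that $\Psi$ commutes with $A = x_{12}$ and $A^* = x_{30}$. Expand $[\Psi, x_{12}] = \xi x_{01}[x_{23}, x_{12}] + \xi [x_{01}, x_{12}]x_{23} + q[x_{30}, x_{12}]$, use the $q$-Weyl relations of Lemma \ref{lem:qw} for the pairs $(x_{01}, x_{12})$ and $(x_{12}, x_{23})$ to rewrite $[x_{01}, x_{12}]$ and $[x_{23}, x_{12}]$ as affine combinations of products and $I$, and substitute $[x_{30}, x_{12}] = \xi(q-q^{-1})(x_{01}-x_{23})$ from Lemma \ref{lem:eval}. After collecting terms, everything cancels. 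The computation of $[\Psi, x_{30}]$ is symmetric, using the remaining two $q$-Weyl relations and $[x_{01}, x_{23}] = \xi^{-1}(q-q^{-1})(x_{12}-x_{30})$. Since $(A, A^*)$ acts irreducibly on $V$ by Corollary \ref{cor:modU} and Definition \ref{def:tdp}(iv), Schur's lemma yields $\Psi = \mu I$ for some $\mu \in \mathbb{F}$.

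Next I would identify $\mu = q^{d+1}+q^{-d-1}$. Take nonzero $v \in E^*_0V$; by Lemma \ref{lem:split}, $E^*_0V = U_0$, the $0$-th component of $\lbrack 0^*, 0\rbrack$, so $x_{30}v = q^d v$ and $x_{23}v = q^{-d}v$. Applying the $q$-Weyl relation $qx_{30}x_{01}-q^{-1}x_{01}x_{30}=(q-q^{-1})I$ to $v$ and projecting onto each $E^*_i V$ (using that $q$ is not a root of unity) forces $x_{01}v - q^{-d}v \in E^*_1V$, so the $E^*_0V$-component of $x_{01}v$ equals $q^{-d}v$. Projecting $\Psi v = \mu v$ onto $E^*_0V$ (and using $Av \in E^*_0V + E^*_1V$ from Definition \ref{def:tdp}(ii)) produces a linear relation between $\mu$ and the diagonal coefficient $\gamma$ of $A$ on $E^*_0V$. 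A second independent linear relation is obtained by running the same commutator-and-Schur argument on $\Psi' := \xi^{-1}(x_{12}x_{30}-I) + qx_{01} + q^{-1}x_{23}$ (the right-hand side of the second identity) to conclude $\Psi' = \mu' I$ for some $\mu' \in \mathbb{F}$, then projecting $\Psi' v = \mu' v$ onto $E^*_0V$. After eliminating $\gamma$, the resulting equation factors as $(q - \xi q^{-d})(\mu - q^{d+1}-q^{-d-1}) = 0$ (and analogously for $\mu'$); in the generic case this gives $\mu = \mu' = q^{d+1}+q^{-d-1}$, while the exceptional case $\xi = q^{d+1}$ (not excluded by Lemma \ref{lem:eval}) is handled by repeating the analysis with $v \in E^*_d V$ in place of $E^*_0V$. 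The main obstacle is this scalar identification, since neither $x_{01}v$ nor $Av$ is itself an eigenvector of a useful operator, and combining two of the four identities is needed to eliminate $\gamma$.

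Finally, equation 3 differs from equation 1 by $\xi[x_{01}, x_{23}] - (q-q^{-1})(x_{12}-x_{30})$, which vanishes by Lemma \ref{lem:eval}, and equation 4 differs from equation 2 by $\xi^{-1}[x_{12}, x_{30}] + (q-q^{-1})(x_{01}-x_{23})$, also zero by Lemma \ref{lem:eval}. Hence equations 3 and 4 follow immediately from equations 1 and 2.
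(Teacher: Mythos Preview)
The paper does not prove this lemma; it is quoted from \cite[Lemma~9.11]{evalTetq}. Your attempt is therefore an independent argument, and most of it is sound: the commutator computations in Step~1 go through exactly as you say, Schur's lemma applies in Step~2 because the eigenspaces $E_iV$ are one-dimensional (so $\Psi$ automatically has an eigenvalue in $\mathbb F$), and the final paragraph correctly derives equations~3 and~4 from equations~1 and~2 via Lemma~\ref{lem:eval}.

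The gap is in Step~3. Projecting $\Psi v=\mu v$ and $\Psi' v=\mu' v$ onto $E^*_0V$ for $v\in E^*_0V$ gives
\[
\mu=\xi(q^{-2d}-1)+q^{d+1}+q^{-1}\gamma,\qquad
\mu'=\xi^{-1}(q^{d}\gamma-1)+q^{1-d}+q^{-d-1},
\]
which are two linear equations in the three unknowns $\mu,\mu',\gamma$. Eliminating $\gamma$ produces a \emph{single} linear relation
\[
q\mu-\xi q^{-d}\mu'=q^{d+2}+q^{-d}-\xi q-\xi q^{-2d-1},
\]
not the factored equation $(q-\xi q^{-d})(\mu-q^{d+1}-q^{-d-1})=0$ in $\mu$ alone that you claim. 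The pair $(\mu,\mu')=(q^{d+1}+q^{-d-1},\,q^{d+1}+q^{-d-1})$ does satisfy this relation, but so do infinitely many other pairs; one equation cannot determine two scalars.

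The remedy is to use $v'\in E^*_dV$ in the \emph{main} argument, not only in the exceptional case. On $E^*_dV$ one has $x_{30}v'=q^{-d}v'$ and $x_{01}v'=q^{d}v'$ (since $E^*_dV$ is the $0$th component of $[D^*,D]$), and the $q$-Weyl relation for $(x_{23},x_{30})$ gives the $E^*_d$-component of $x_{23}v'$ as $q^{d}v'$. Projecting $\Psi v'=\mu v'$ (via the equivalent third expression) and $\Psi' v'=\mu' v'$ onto $E^*_dV$ then yields a second, independent linear relation
\[
q^{-1}\mu-\xi q^{d}\mu'=q^{d}+q^{-d-2}-\xi q^{2d+1}-\xi q^{-1}.
\]
The resulting $2\times 2$ system has determinant $\xi(q^{-d-1}-q^{d+1})\neq 0$ and unique solution $\mu=\mu'=q^{d+1}+q^{-d-1}$, with no exceptional values of $\xi$ to treat separately.
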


\noindent Next we use $x_{01}, x_{12}, x_{23}, x_{30}$ to describe how the alternating elements act on $V$.

\begin{lemma}\label{lem:si} There exists a sequence $\lbrace r_n \rbrace_{n \in \mathbb N}$ of scalars in $\mathbb F$ such that $r_0 = 1$ and for $n \in \mathbb N$ the following hold on the
$U^+_q$-module $V$:
\begin{align*}
W_{-n} &= r_n x_{12} - q \xi r_{n-1} I,\qquad \qquad 
G_{n} = r_n I - q \xi r_{n-1} x_{23},\\
W_{n+1} &= r_n x_{30} - q \xi r_{n-1}I, \qquad \qquad 
{\tilde G}_n  = r_n I - q \xi r_{n-1} x_{01}.
\end{align*}
In the above lines $r_{-1}=0$.
\end{lemma}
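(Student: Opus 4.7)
My plan is to proceed by strong induction on $n$, proving all four formulas simultaneously and defining the scalars $r_n$ along the way. The base case $n=0$ holds with $r_0=1$ and the convention $r_{-1}=0$: the four claimed equalities reduce to $W_0=x_{12}$, $W_1=x_{30}$, $G_0=I$, $\tilde G_0=I$, which hold by Definition \ref{def:maps} and the conventions $G_0=\tilde G_0=I$ introduced above Lemma \ref{prop:nGGWW}.

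For the inductive step I assume the formulas hold for all indices up to $n$, and derive them at index $n+1$ in the order $G_{n+1},\tilde G_{n+1},W_{-(n+1)},W_{n+2}$ dictated by Lemma \ref{lem:nrecgen}. First, applying \eqref{eq:nsolvGt} to the inductive expression $W_{n+1}=r_n x_{30}-q\xi r_{n-1}I$ gives
\begin{align*}
\tilde G_{n+1}-G_{n+1}=\frac{[W_0,W_{n+1}]}{1-b^{-1}}=\frac{r_n[x_{12},x_{30}]}{1-b^{-1}}=q\xi r_n(x_{23}-x_{01}),
\end{align*}
where the last equality uses Lemma \ref{lem:eval} to identify $[x_{12},x_{30}]=\xi(q-q^{-1})(x_{23}-x_{01})$ together with $(q-q^{-1})/(1-b^{-1})=q$. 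Rearranging,
\begin{align*}
G_{n+1}+q\xi r_n x_{23}=\tilde G_{n+1}+q\xi r_n x_{01}.
\end{align*}

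Next I invoke Theorem \ref{thm:GGaction}: since $\Phi$ is Leonard, the components of $\lbrack 0^*,0\rbrack$ are the one-dimensional $x_{23}$-eigenspaces (Definition \ref{def:maps}), and $G_{n+1}$ preserves each, so $G_{n+1}$ lies in the commutative subalgebra $\langle x_{23}\rangle\subseteq \mathrm{End}(V)$; analogously $\tilde G_{n+1}\in\langle x_{01}\rangle$ via $\lbrack D^*,D\rbrack$. Therefore the common value in the previous display lies in $\langle x_{23}\rangle\cap\langle x_{01}\rangle$. The key observation is that this intersection equals $\mathbb F I$: Lemma \ref{lem:upsfour} permits one to solve for $x_{12}$ and $x_{30}$ as polynomials in $x_{01},x_{23}$ (taking the sum and difference of the two relations involving $x_{23}x_{01}$ and $x_{01}x_{23}$, and using that $\Upsilon$ is a scalar), so the subalgebra generated by $x_{23},x_{01}$ contains $A=x_{12}$ and $A^*=x_{30}$ and hence acts irreducibly on $V$ by Definition \ref{def:tdp}(iv); by Schur's lemma its centralizer reduces to $\mathbb F I$, and any element of $\langle x_{23}\rangle\cap\langle x_{01}\rangle$ lies in that centralizer. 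Defining $r_{n+1}$ to be this common scalar yields
\begin{align*}
G_{n+1}=r_{n+1}I-q\xi r_n x_{23},\qquad \tilde G_{n+1}=r_{n+1}I-q\xi r_n x_{01}.
\end{align*}

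Finally, \eqref{eq:nsolvWm} combined with the just-established formula for $G_{n+1}$ and the identity $[x_{12},x_{23}]_b=(b-1)I$ (which follows from Lemma \ref{lem:qw} by multiplying $qx_{12}x_{23}-q^{-1}x_{23}x_{12}=(q-q^{-1})I$ by $q$) produces $W_{-(n+1)}=r_{n+1}x_{12}-q\xi r_n I$; symmetrically, \eqref{eq:nsolvWp} together with $[x_{23},x_{30}]_b=(b-1)I$ yields $W_{n+2}=r_{n+1}x_{30}-q\xi r_n I$. This closes the induction. The main obstacle I anticipate is the intersection claim $\langle x_{23}\rangle\cap\langle x_{01}\rangle=\mathbb F I$; once it is handled by the irreducibility argument just sketched, the remaining computations are mechanical applications of the $q$-tetrahedron relations of Lemmas \ref{lem:qw} and \ref{lem:eval}.
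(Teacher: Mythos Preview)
Your proof is correct and takes a genuinely different route from the paper's. The paper establishes the formula for $G_n$ by directly evaluating the full recursion \eqref{eq:nsolvG}: it expands the two sums $\sum W_{-k}W_{n-k}b^{-k}$ and $\sum G_k\tilde G_{n-k}b^{-k}$ via the inductive formulas, then uses Lemma \ref{lem:upsfour} to replace $x_{12}x_{30}$ and $x_{23}x_{01}$ by affine combinations of $x_{01},x_{12},x_{23},x_{30},I$, and finally checks by brute force that the coefficients of $x_{01},x_{12},x_{30}$ all cancel, leaving $G_n+q\xi r_{n-1}x_{23}$ scalar. After that, $\tilde G_n$, $W_{-n}$, $W_{n+1}$ are handled just as you do.

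You bypass the heavy sum \eqref{eq:nsolvG} entirely. Your two key moves are (i) to note from Theorem \ref{thm:GGaction} that in the Leonard case $G_{n+1}$ and $\tilde G_{n+1}$ lie in $\langle x_{23}\rangle$ and $\langle x_{01}\rangle$ respectively, and (ii) to argue that $\langle x_{23}\rangle\cap\langle x_{01}\rangle=\mathbb F I$ by observing (via the first and third relations in Lemma \ref{lem:upsfour}, whose $2\times 2$ system in $x_{12},x_{30}$ has determinant $q^2-q^{-2}\neq 0$) that $x_{01},x_{23}$ generate an algebra containing $A,A^*$ and hence acting irreducibly. The single relation \eqref{eq:nsolvGt} then pins down both $G_{n+1}$ and $\tilde G_{n+1}$ at once. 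This is considerably cleaner; what it costs is the extra input of Theorem \ref{thm:GGaction}, whereas the paper's argument stays entirely within the $q$-tetrahedron relations of Lemmas \ref{lem:qw}--\ref{lem:upsfour}. One small remark: your appeal to ``Schur's lemma'' for the centralizer being $\mathbb F I$ does not require $\mathbb F$ to be algebraically closed here, because $E_0V$ is one-dimensional; any $C$ commuting with $A,A^*$ preserves $E_0V$, acts there as some scalar $\lambda$, and then $\ker(C-\lambda I)$ is a nonzero $A,A^*$-invariant subspace, forcing $C=\lambda I$. It may be worth saying this explicitly.
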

\begin{proof} We use induction with respect to the ordering of the alternating elements given in Lemma \ref{lem:nrecgen}. We carry out the induction
using \eqref{eq:nsolvG}--\eqref{eq:nsolvWp} and Lemmas \ref{lem:qw}--\ref{lem:upsfour}.
The details are given below.
\\
\noindent $G_n$ $(n\geq 1)$: We evaluate the right-hand side of  \eqref{eq:nsolvG}. By induction,
\begin{align*}
\sum_{k=0}^{n-1} W_{-k} W_{n-k} q^{n-1-2k} &= 
\sum_{k=0}^{n-1}(r_k x_{12} - q \xi r_{k-1} I)( r_{n-k-1} x_{30} - q \xi r_{n-k-2}I)q^{n-1-2k}
\\
&= 
x_{12}x_{30} \sum_{k=0}^{n-1}r_k  r_{n-k-1} q^{n-1-2k}
-q \xi x_{12}\sum_{k=0}^{n-1}r_k r_{n-k-2}q^{n-1-2k}
\\
& \qquad 
-q \xi x_{30}\sum_{k=0}^{n-1}r_{k-1} r_{n-k-1} q^{n-1-2k}
+q^2 \xi^2\sum_{k=0}^{n-1}r_{k-1}r_{n-k-2}q^{n-1-2k}.
\end{align*}
By Lemma  \ref{lem:upsfour},
\begin{align*}
x_{12} x_{30}  = (1 + \xi q^{d+1} + \xi q^{-d-1}) I - q \xi x_{01} - q^{-1} \xi x_{23}.
\end{align*}
\noindent By induction,
\begin{align*}
\sum_{k=1}^{n-1} G_k  \tilde G_{n-k} q^{n-2k} &=   
\sum_{k=1}^{n-1} (r_k I - q \xi r_{k-1} x_{23})( r_{n-k} I - q \xi r_{n-k-1} x_{01})q^{n-2k}
\\
&= 
q^2 \xi^2 x_{23} x_{01} \sum_{k=1}^{n-1} r_{k-1} r_{n-k-1}q^{n-2k}
-q \xi x_{23} \sum_{k=1}^{n-1}  r_{k-1} r_{n-k} q^{n-2k}
\\
&
\qquad -q \xi x_{01}\sum_{k=1}^{n-1} r_k r_{n-k-1} q^{n-2k}
+
\sum_{k=1}^{n-1} r_k r_{n-k} q^{n-2k}.
\end{align*}
By Lemma  \ref{lem:upsfour},
\begin{align*}
x_{23} x_{01}  = (1+ \xi^{-1} q^{d+1} + \xi^{-1} q^{-d-1}) I -q\xi^{-1} x_{12} - q^{-1} \xi^{-1}x_{30}.
\end{align*}
By induction and Lemma \ref{lem:eval},
\begin{align*}
W_n  W_0-W_0 W_n &= \lbrack r_{n-1} x_{30} - q \xi r_{n-2}I, x_{12} \rbrack
\\
&=  r_{n-1} \lbrack x_{30}, x_{12} \rbrack
\\&= (q-q^{-1})\xi r_{n-1} (x_{01}- x_{23}).
\end{align*}
\noindent Evaluating the right-hand side of  \eqref{eq:nsolvG} using the above comments, we express $G_n$ is a linear combination of $x_{01}$, $x_{12}$, $x_{23}$, $x_{30}$, $I$. In this linear combination
the coefficients of $x_{01}$, $x_{12}$, $x_{23}$, $x_{30}$ are $0$, $0$, $-q \xi r_{n-1}$, $0$, respectively. Therefore
$G_n + q \xi r_{n-1} x_{23}$ is a scalar multiple of $I$. Denoting this scalar by $r_n$, we have
$G_{n} = r_n I - q \xi r_{n-1} x_{23}$.
\\
\noindent ${\tilde G}_n$ $(n \geq 1)$: We evaluate  \eqref{eq:nsolvGt}. By induction and Lemma \ref{lem:eval},
\begin{align*}
{\tilde G}_n &= G_n + \frac{\lbrack W_0, W_n \rbrack}{1- b^{-1}}\\
&= r_n I - q \xi r_{n-1} x_{23} + \frac{ \lbrack x_{12}, r_{n-1} x_{30} - q \xi r_{n-2} I \rbrack }{1-b^{-1}}
\\
&= r_n I - q \xi r_{n-1} x_{23} + r_{n-1} \frac{\lbrack x_{12}, x_{30} \rbrack }{1-b^{-1}}
\\
&= r_n I - q \xi r_{n-1} x_{23} +q \xi  r_{n-1} (x_{23}-x_{01})
\\
&= r_n I - q \xi r_{n-1} x_{01}.
\end{align*}
\noindent $W_{-n}$ $(n\geq 1)$: We evaluate \eqref{eq:nsolvWm}. By induction and Lemma \ref{lem:qw},
\begin{align*}
W_{-n} &= \frac{\lbrack W_0,G_n\rbrack_b}{b-1}
\\
&= \frac{\lbrack x_{12},  r_n I - q \xi r_{n-1} x_{23}\rbrack_b}{b-1} \\
&=  r_n x_{12} - q \xi  r_{n-1} \frac{\lbrack x_{12}, x_{23}\rbrack_b}{b-1} \\
&=  r_n x_{12} - q \xi  r_{n-1}I.
\end{align*}
\\
\noindent $W_{n+1}$ $(n \geq 1)$: We evaluate \eqref{eq:nsolvWp}. By induction and Lemma \ref{lem:qw},
\begin{align*}
W_{n+1} &= \frac{\lbrack G_n, W_1\rbrack_b}{b-1}
\\
&= \frac{\lbrack  r_n I - q \xi r_{n-1} x_{23}, x_{30} \rbrack_b}{b-1} \\
&=  r_n x_{30} - q \xi  r_{n-1} \frac{\lbrack x_{23}, x_{30}\rbrack_b}{b-1} \\
&=  r_n x_{30} - q \xi  r_{n-1}I.
\end{align*}
 \end{proof}

\noindent Our next main goal is to compute the scalars $\lbrace r_n \rbrace_{n \in \mathbb N}$ from Lemma \ref{lem:si}.
To compute these scalars, it is convenient to make a change of variables.
\begin{definition}\label{def:rv} \rm For $n \in \mathbb N$ define
\begin{align}
r^\vee_n = \sum_{k=0}^n r_k r_{n-k} q^{n-2k}. \label{eq:rv}
\end{align}
Note that $r^\vee_0=1$.
\end{definition}

\noindent Next we compute $\lbrace r^\vee_n \rbrace_{n \in \mathbb N}$. This is done in the following recursive manner.

\begin{proposition} \label{lem:rvRec} For $n\geq 1$ we have
\begin{align}
\label{eq:recrv}
0 &= r^\vee_n - q(1+ \xi q^{d+1} + \xi q^{-d-1}) r^\vee_{n-1} + q^2 \xi (\xi + q^{d+1} + q^{-d-1}) r^\vee_{n-2} - q^3 \xi^2  r^\vee_{n-3},
\end{align}
\noindent where $r^\vee_0=1$ and $r^\vee_{-1}=0$ and $r^\vee_{-2}=0$.
\end{proposition}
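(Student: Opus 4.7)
The plan is to extract the recursion from the identity \eqref{eq:nGGWW1} of Lemma \ref{prop:nGGWW},
\begin{align*}
\sum_{k=0}^n G_k \tilde G_{n-k} b^{-k} = \sum_{k=0}^{n-1} W_{-k} W_{n-k} b^{-k},
\end{align*}
which holds on the $U^+_q$-module $V$ of Corollary \ref{cor:modU}. Substituting the closed forms of Lemma \ref{lem:si} (and using $b=q^2$) turns the left-hand side into a combination of $I$, $x_{01}$, $x_{23}$, and the product $x_{23} x_{01}$, while the right-hand side becomes a combination of $I$, $x_{12}$, $x_{30}$, and the product $x_{12} x_{30}$. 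By Lemma \ref{lem:upsfour} we may expand
\begin{align*}
x_{23} x_{01} &= (1+\xi^{-1}q^{d+1}+\xi^{-1}q^{-d-1})I - q\xi^{-1} x_{12} - q^{-1}\xi^{-1} x_{30}, \\
x_{12} x_{30} &= (1+\xi q^{d+1}+\xi q^{-d-1})I - q\xi x_{01} - q^{-1}\xi x_{23}.
\end{align*}
After these substitutions, both sides of \eqref{eq:nGGWW1} become $\mathbb F$-linear combinations of the five operators $I, x_{01}, x_{12}, x_{23}, x_{30}$ whose coefficients are double sums in the $r_k$'s.

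The next step is to reduce each of these double sums to a scalar $r^\vee_m$ via Definition \ref{def:rv} and simple index shifts; for instance $\sum_{k=0}^{n-1} r_k r_{n-1-k} q^{-2k} = q^{1-n} r^\vee_{n-1}$, and with the conventions $r_{-1}=r_{-2}=0$ one likewise recovers $r^\vee_{n-2}$ and $r^\vee_{n-3}$ from appropriately shifted sums. A direct calculation shows that in each of the coefficients of $x_{01}, x_{12}, x_{23}, x_{30}$ the two sides match identically: for example, both produce $-q^{2-n}\xi\, r^\vee_{n-1}$ in front of $x_{01}$, and $-q^{3-n}\xi\, r^\vee_{n-2}$ in front of $x_{12}$. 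Consequently the identity collapses to $c\,I=0$ in ${\rm End}(V)$ for a scalar $c$ that is an $\mathbb F$-linear combination of $r^\vee_n, r^\vee_{n-1}, r^\vee_{n-2}, r^\vee_{n-3}$. Since $I\neq 0$ we conclude $c=0$, and after clearing a common factor of $q^{-n}$ this scalar identity is exactly \eqref{eq:recrv}.

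The main obstacle is careful bookkeeping rather than any conceptual difficulty. There are four separate cancellation identities to verify (one per operator $x_{ij}$), and the boundary conventions $r_{-1}=r_{-2}=0$ and $r^\vee_{-1}=r^\vee_{-2}=0$ must be respected so that the index-shift arguments deliver the correct $r^\vee_m$ in every coefficient. The cancellation of the four $x_{ij}$ coefficients is what justifies isolating the coefficient of $I$ (no linear-independence hypothesis on $I, x_{01}, x_{12}, x_{23}, x_{30}$ is needed once we verify the cancellation directly), so this verification is the substantive part of the argument.
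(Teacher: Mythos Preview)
Your proposal is correct and follows essentially the same approach as the paper: substitute Lemma~\ref{lem:si} into \eqref{eq:nGGWW1}, use Lemma~\ref{lem:upsfour} to eliminate the products $x_{12}x_{30}$ and $x_{23}x_{01}$, and read off the scalar relation from the coefficient of $I$. The paper organizes the computation slightly more efficiently by first collecting the operator coefficients of $r^\vee_{n-1}$ and $r^\vee_{n-2}$ and recognizing them as exactly the combinations $x_{12}x_{30}+q\xi x_{01}+q^{-1}\xi x_{23}$ and $\xi x_{23}x_{01}+qx_{12}+q^{-1}x_{30}$ that Lemma~\ref{lem:upsfour} shows are scalar multiples of $I$, so the separate verification of the four $x_{ij}$-coefficient cancellations is bypassed.
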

\begin{proof}Let $\zeta_n$ denote the expression on the right in \eqref{eq:recrv}. We show that $\zeta_n=0$.
Evaluate \eqref{eq:nGGWW1} using
Lemma \ref{lem:si}, and simplify the result using Definition \ref{def:rv}. This yields
\begin{align*}
0 &= r^\vee_n I - q r^\vee_{n-1}(x_{12} x_{30} + q \xi x_{01} + q^{-1} \xi x_{23})  \\
& \qquad \qquad \quad + q^2 \xi r^\vee_{n-2} (\xi x_{23} x_{01} + qx_{12} + q^{-1} x_{30} ) - q^3 \xi^2 r^\vee_{n-3} I.
\end{align*}
By Lemma  \ref{lem:upsfour},
\begin{align*}
x_{12} x_{30} + q \xi x_{01} + q^{-1} \xi x_{23} &= (1 + \xi q^{d+1} + \xi q^{-d-1}) I,\\
\xi x_{23} x_{01} + qx_{12} + q^{-1} x_{30} &= (\xi + q^{d+1} + q^{-d-1}) I.
\end{align*}
By these comments  $\zeta_nI=0$, so $\zeta_n=0$.
\end{proof}

\begin{example}\rm We have
\begin{align*}
 r^\vee_0 &=1,
 \\
 r^\vee_1 &= q+ \xi q^{d+2}+ \xi q^{-d},
 \\
 r^\vee_2 &=  q^2+ \xi^2 q^{2d+4} + \xi^2 q^{-2d} + \xi q^{d+3} +\xi q^{1-d} + \xi^2 q^2.
 \end{align*}
 \end{example}

\noindent Next we obtain the scalars $\lbrace r_n \rbrace_{n \in \mathbb N}$ from the scalars $\lbrace r^\vee_n \rbrace_{n \in \mathbb N}$.
This is done in the following recursive manner.

\begin{proposition} \label{prop:recover} For $n \geq 1$,
\begin{align*}
 r_n = \frac{ r^\vee_n - \sum_{k=1}^{n-1} r_k r_{n-k}q^{n-2k}}{q^n+q^{-n}}.
 \end{align*}
 \end{proposition}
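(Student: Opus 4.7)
The plan is to derive the identity directly from Definition \ref{def:rv}, which reads
\begin{align*}
r^\vee_n = \sum_{k=0}^n r_k r_{n-k} q^{n-2k}.
\end{align*}
First I would split off the two boundary terms $k=0$ and $k=n$ from the sum. These contribute $r_0 r_n q^n$ and $r_n r_0 q^{-n}$ respectively. Using $r_0 = 1$ from Definition \ref{def:rv}, they combine to give $r_n(q^n + q^{-n})$. The remaining terms are precisely $\sum_{k=1}^{n-1} r_k r_{n-k} q^{n-2k}$. Therefore
\begin{align*}
r^\vee_n = r_n(q^n + q^{-n}) + \sum_{k=1}^{n-1} r_k r_{n-k} q^{n-2k}.
\end{align*}

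Next I would solve this equation for $r_n$. To divide by $q^n + q^{-n}$, I need this quantity to be nonzero. Writing $q^n + q^{-n} = q^{-n}(q^{2n} + 1)$, nonvanishing is equivalent to $q^{2n} \neq -1$. Since we assume at the start of Section 8 that $q$ is not a root of unity, $q^{2n} \neq -1$, and hence $q^n + q^{-n} \neq 0$. Dividing yields the stated formula.

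The argument is purely bookkeeping from the definition of $r^\vee_n$; there is no real obstacle, only the minor point that one must invoke the standing hypothesis that $q$ is not a root of unity in order to justify division by $q^n + q^{-n}$. No use of the recursion in Proposition \ref{lem:rvRec} or of the alternating elements is needed for this step — Proposition \ref{prop:recover} simply inverts the convolution defining $r^\vee_n$ so that, starting from $r_0 = 1$ and the sequence $\{r^\vee_n\}_{n \in \mathbb N}$ computed in Proposition \ref{lem:rvRec}, one recovers $\{r_n\}_{n \in \mathbb N}$ recursively.
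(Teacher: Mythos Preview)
Your proof is correct and follows the same approach as the paper, which simply remarks that the formula is a reformulation of \eqref{eq:rv}. One minor citation slip: $r_0 = 1$ comes from Lemma~\ref{lem:si}, not Definition~\ref{def:rv}; your added justification that $q^n + q^{-n} \neq 0$ (using that $q$ is not a root of unity) is a nice detail the paper leaves implicit.
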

 \begin{proof} This is a reformulation of \eqref{eq:rv}.
 \end{proof}

\begin{example}\rm We have
\begin{align*}
 r_0 &=1,
 \\
 r_1 &= \frac{q+ \xi q^{d+2}+ \xi q^{-d}}{q+q^{-1}},
 \\
 r_2 &= \frac{q^2+ \xi^2 q^{2d+4}+\xi^2 q^{-2d}}{(q+q^{-1})^2 (q^2+q^{-2})} + 
 \frac{q^2+ \xi^2 q^{2d+4} + \xi^2 q^{-2d} + \xi q^{d+3} +\xi q^{1-d} + \xi^2 q^2}{(q+q^{-1})^2}.
 \end{align*}
 \end{example}

\noindent Next we describe $\lbrace r_n \rbrace_{n \in \mathbb N}$ and $\lbrace r^\vee_n \rbrace_{n \in \mathbb N}$ using generating functions.

\begin{definition}\label{def:gf} \rm Define the generating functions
\begin{align*}
 R(t) = \sum_{n \in \mathbb N} r_n t^n, \qquad \qquad R^\vee(t) = \sum_{n \in \mathbb N} r^\vee_n t^n.
 \end{align*}
\end{definition}

\begin{lemma} \label{lem:SR2} We have
\begin{align*}
R^\vee(t) = R(qt)R(q^{-1}t).
\end{align*}
\end{lemma}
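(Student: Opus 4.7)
The plan is to verify this by a direct Cauchy-product computation, exploiting a symmetry in the definition of $r^\vee_n$.

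First I would rewrite $r^\vee_n$ in a symmetric form. By the change of summation variable $k \mapsto n-k$ in \eqref{eq:rv},
\[
r^\vee_n = \sum_{k=0}^n r_{n-k} r_k q^{n-2(n-k)} = \sum_{k=0}^n r_k r_{n-k} q^{2k-n}.
\]
So $r^\vee_n$ can equivalently be written with the exponent $2k-n$ in place of $n-2k$.

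Next I would expand $R(qt)R(q^{-1}t)$ as a Cauchy product. From Definition \ref{def:gf},
\[
R(qt) = \sum_{k\in\mathbb N} r_k q^k t^k, \qquad R(q^{-1}t) = \sum_{j\in\mathbb N} r_j q^{-j} t^j,
\]
so
\[
R(qt)R(q^{-1}t) = \sum_{n\in\mathbb N} t^n \sum_{k=0}^n r_k r_{n-k} q^{k} q^{-(n-k)} = \sum_{n\in\mathbb N} t^n \sum_{k=0}^n r_k r_{n-k} q^{2k-n}.
\]
By the symmetric form of $r^\vee_n$ obtained above, the inner sum equals $r^\vee_n$, giving
\[
R(qt)R(q^{-1}t) = \sum_{n\in\mathbb N} r^\vee_n t^n = R^\vee(t),
\]
as required.

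There is no real obstacle here; the only subtlety is noticing that the sign convention in the exponent of $q$ in \eqref{eq:rv} is immaterial because $r^\vee_n$ is symmetric under $k \leftrightarrow n-k$, so the Cauchy product assembles the series in exactly the right way.
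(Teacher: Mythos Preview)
Your proof is correct and is essentially what the paper has in mind; the paper simply writes ``This is a routine consequence of \eqref{eq:rv}'' and your Cauchy-product computation is exactly that routine verification. One could even skip the symmetry step by expanding $R(q^{-1}t)R(qt)$ instead (or equivalently invoking commutativity of the product), which gives the exponent $n-2k$ directly, but what you wrote is fine.
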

\begin{proof} This is a routine consequence of \eqref{eq:rv}.
\end{proof}

\noindent The recursion \eqref{eq:recrv}
 looks as follows in terms of generating functions.
\begin{proposition} \label{lem:RVform} We have
\begin{align*}
R^\vee(t) = \frac{1}{(1-qt)(1- q^{d+2}\xi t)(1-  q^{-d}\xi t)}.
\end{align*}
\end{proposition}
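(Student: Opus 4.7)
The plan is to translate the three-term recursion of Proposition \ref{lem:rvRec} into a functional equation for $R^\vee(t)$, and then factor the resulting denominator polynomial. All identities are understood as formal power series identities in $t$.

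First, I would multiply the recursion
\[
r^\vee_n - q(1+\xi q^{d+1}+\xi q^{-d-1})\,r^\vee_{n-1} + q^2\xi(\xi+q^{d+1}+q^{-d-1})\,r^\vee_{n-2} - q^3\xi^2\,r^\vee_{n-3} = 0
\]
by $t^n$ and sum over $n\geq 1$. Using $r^\vee_{-1}=r^\vee_{-2}=0$ and $r^\vee_0=1$, the index shifts give $\sum_{n\geq 1}r^\vee_{n-1}t^n = tR^\vee(t)$, and similarly $t^2R^\vee(t)$ and $t^3R^\vee(t)$ for the next two terms; the first term contributes $R^\vee(t)-1$. Collecting, I obtain
\[
R^\vee(t)\bigl[1 - q(1+\xi q^{d+1}+\xi q^{-d-1})t + q^2\xi(\xi+q^{d+1}+q^{-d-1})t^2 - q^3\xi^2 t^3\bigr] = 1.
\]

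Next I would verify, by a direct expansion, that the bracketed cubic equals $(1-qt)(1-q^{d+2}\xi t)(1-q^{-d}\xi t)$. Setting $a=qt$, $b=q^{d+2}\xi t$, $c=q^{-d}\xi t$, one checks
\[
a+b+c = qt(1+\xi q^{d+1}+\xi q^{-d-1}), \qquad abc = q^3\xi^2 t^3,
\]
and
\[
ab+ac+bc = q^{d+3}\xi t^2 + q^{1-d}\xi t^2 + q^2\xi^2 t^2 = q^2\xi(\xi+q^{d+1}+q^{-d-1})t^2,
\]
which match the coefficients in the bracket after applying $(1-a)(1-b)(1-c) = 1-(a+b+c)+(ab+ac+bc)-abc$. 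Dividing both sides by this product gives the desired formula.

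There is essentially no obstacle; the only thing to be careful about is the bookkeeping of the base cases ($r^\vee_0=1$ producing the $1$ on the right after the index shift, and $r^\vee_{-1}=r^\vee_{-2}=0$ so that no extra boundary terms appear from $n=1,2$). The factorization step is then a one-line symbolic check.
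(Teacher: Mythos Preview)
Your proof is correct and follows essentially the same approach as the paper: both convert the linear recursion of Proposition~\ref{lem:rvRec} into the identity $R^\vee(t)\bigl[1 - q(1+\xi q^{d+1}+\xi q^{-d-1})t + q^2\xi(\xi+q^{d+1}+q^{-d-1})t^2 - q^3\xi^2 t^3\bigr]=1$ and then factor the cubic. The paper packages the summation step as a generating function $G(t)=\sum c_n t^n$ with $c_0=r^\vee_0$ and $c_n$ equal to the recursion's right-hand side for $n\ge 1$, but the content is identical to your direct index shift; your explicit verification of the factorization via elementary symmetric functions is a nice addition.
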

\begin{proof} Consider the generating function  $G(t)=\sum_{n\in \mathbb N} c_n t^n$  such that $c_0= r^\vee_0$ and
for $n\geq 1$, $c_n$ is equal to the expression on the right in  \eqref{eq:recrv}. Using Proposition  \ref{lem:rvRec}
we obtain
\begin{align*}
1 &=G(t)
\\
&= R^\vee(t) - q(1 + \xi q^{d+1}+\xi q^{-d-1}) t R^\vee(t) + q^2 \xi (\xi + q^{d+1} + q^{-d-1}) t^2 R^\vee(t) - q^3 \xi^2 t^3 R^\vee(t)
\\
&= (1-qt)(1-  q^{d+2}\xi t)(1-  q^{-d}\xi t) R^\vee(t).
\end{align*}
The result follows.
\end{proof}

\noindent We now give our third main result.
\begin{theorem} \label{thm:third} We have
\begin{align*}
R(qt)R(q^{-1}t ) = \frac{1}{(1-qt)(1-q^{d+2}\xi t)(1- q^{-d}\xi t)}.
\end{align*}
\end{theorem}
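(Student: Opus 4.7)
The plan is to combine the two preceding results about the generating function $R^\vee(t)$ — namely Lemma \ref{lem:SR2} and Proposition \ref{lem:RVform} — and observe that Theorem \ref{thm:third} falls out immediately. There is essentially no obstacle here; this theorem is a direct corollary once both of those preliminary statements are in hand, and the real work has already been done in the recursion of Proposition \ref{lem:rvRec} and the generating-function identification of Proposition \ref{lem:RVform}.

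First I would invoke Lemma \ref{lem:SR2}, which expresses the convolution identity \eqref{eq:rv} at the level of generating functions:
\begin{align*}
R^\vee(t) = R(qt)\,R(q^{-1}t).
\end{align*}
Then I would invoke Proposition \ref{lem:RVform}, which identifies $R^\vee(t)$ with the rational function
\begin{align*}
R^\vee(t) = \frac{1}{(1-qt)(1-q^{d+2}\xi t)(1-q^{-d}\xi t)}.
\end{align*}
Equating the two right-hand sides yields precisely the claimed identity.

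Since both Lemma \ref{lem:SR2} and Proposition \ref{lem:RVform} are already established, the proof of Theorem \ref{thm:third} is a one-line combination, and I expect no technical obstacle. If one wished to make the argument more self-contained, the only nontrivial input is Proposition \ref{lem:RVform}, which in turn rests on the three-term recursion for $\{r^\vee_n\}_{n \in \mathbb N}$ coming from Proposition \ref{lem:rvRec} (itself obtained by substituting the formulas of Lemma \ref{lem:si} into the relation \eqref{eq:nGGWW1} and using the tetrahedral identities of Lemma \ref{lem:upsfour}). Thus all the analytic content is upstream, and the theorem itself is stated as the closed-form version of the same information.
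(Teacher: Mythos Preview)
Your proposal is correct and matches the paper's own proof exactly: the paper's argument is the single line ``By Lemma \ref{lem:SR2} and Proposition \ref{lem:RVform},'' which is precisely the combination you describe.
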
 \begin{proof}
By Lemma  \ref{lem:SR2} and Proposition \ref{lem:RVform}.
\end{proof}

\noindent Next we impose a mild assumption on $\mathbb F$, and give an explicit formula for $R(t)$.
The following is our fourth main result.
\begin{theorem}\label{thm:exp}
Assume that $\mathbb F$ has characteristic 0. Then
 \begin{align*}
 R(t) =
{\rm exp}  \Biggl( \sum_{k=1}^\infty  \frac{1+ q^{k(d+1)}\xi^k + q^{-k(d+1)}\xi^k}{q^k+q^{-k}} \,\frac{q^k t^k}{k} \Biggr).
\end{align*}
\end{theorem}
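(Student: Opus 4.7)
The plan is to take the formal logarithm of the functional equation in Theorem \ref{thm:third} and solve term-by-term. Since $r_0 = 1$, the series $R(t) \in \mathbb F[[t]]$ has constant term $1$, and because $\mathbb F$ has characteristic $0$ the formal logarithm $L(t) := \log R(t) = \sum_{k\geq 1}\ell_k t^k$ is well defined. Taking $\log$ of both sides of the identity $R(qt)R(q^{-1}t) = \bigl((1-qt)(1-q^{d+2}\xi t)(1-q^{-d}\xi t)\bigr)^{-1}$ converts the product into a sum
\begin{align*}
L(qt)+L(q^{-1}t) = -\log(1-qt) - \log(1-q^{d+2}\xi t) - \log(1-q^{-d}\xi t).
\end{align*}

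Next I expand both sides as power series in $t$. On the left, $L(qt)+L(q^{-1}t) = \sum_{k\geq 1} \ell_k (q^k+q^{-k}) t^k$. On the right, using $-\log(1-x) = \sum_{k\geq 1} x^k/k$, the coefficient of $t^k$ is
\begin{align*}
\frac{q^k + q^{k(d+2)}\xi^k + q^{-kd}\xi^k}{k} = \frac{q^k}{k}\bigl(1 + q^{k(d+1)}\xi^k + q^{-k(d+1)}\xi^k\bigr).
\end{align*}
Since $q$ is not a root of unity, $q^k + q^{-k}\neq 0$ for every $k\geq 1$ (this is where the root-of-unity assumption on $q$ is essential). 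I can therefore solve uniquely for
\begin{align*}
\ell_k = \frac{1+q^{k(d+1)}\xi^k + q^{-k(d+1)}\xi^k}{q^k+q^{-k}}\cdot \frac{q^k}{k}.
\end{align*}

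Finally, exponentiating $L(t) = \sum_{k\geq 1}\ell_k t^k$ recovers $R(t)$ in the asserted closed form. I should note that the argument is entirely in the formal power series ring $\mathbb F[[t]]$, so no analytic convergence is needed; the only subtle points are (i) dividing by $k$, which requires the characteristic-$0$ hypothesis, and (ii) dividing by $q^k + q^{-k}$, which requires $q$ to not be a root of unity. I do not expect any real obstacle beyond keeping the bookkeeping of powers of $q$ straight when pulling a factor of $q^k$ out on the right-hand side so that the final expression matches the statement of the theorem exactly.
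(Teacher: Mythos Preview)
Your proof is correct and uses essentially the same idea as the paper: both arguments pass through the functional equation of Theorem~\ref{thm:third}, take formal logarithms, and use the power-series expansion of $-\log(1-x)$ together with $q^k+q^{-k}\neq 0$. The only cosmetic difference is direction---the paper writes down the candidate $E(t)=\exp P(t)$, verifies $E(qt)E(q^{-1}t)=R^\vee(t)$, and appeals to uniqueness of the solution, whereas you take $\log R(t)$ and solve for the coefficients directly; your version has the mild advantage of making that uniqueness step explicit rather than asserted.
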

\begin{proof} Define the generating function
 \begin{align*}
 P(t) = \sum_{k=1}^\infty  \frac{1+ q^{k(d+1)}\xi^k +q^{-k(d+1)}\xi^k}{q^k+q^{-k}} \,\frac{q^k t^k}{k}.
\end{align*}
Note that $P(t)$ has constant term 0.
Define $E(t) = {\rm exp} \,P(t)$, and note that $E(t)$ has constant term 1. We will show that $R(t)=E(t)$. By  Lemma
 \ref{lem:SR2} and the construction,
 $R(t)$ is the unique generating function over $\mathbb F$ that has constant term 1 and $R^\vee(t)=R(qt)R(q^{-1}t)$. Therefore,  $R^\vee(t)=E(qt)E(q^{-1}t)$ implies $R(t)=E(t)$.
Recall the natural logarithm ln; see for example \cite[Section~2]{beckPBW}. We have
\begin{align*}
E(qt) E(q^{-1}t) &= {\rm exp}\,\Bigl( P(qt)+ P(q^{-1}t)\Bigr)
\\
&=  
{\rm exp} \Biggl(  \sum_{k=1}^\infty  \frac{1+ q^{k(d+1)} \xi^k+ q^{-k(d+1)}\xi^k }{k} \,q^k t^k \Biggr)
 \\
 &={\rm exp} \Biggl(  \sum_{k=1}^\infty  \frac{q^k t^k}{k} \Biggr)  
 {\rm exp} \Biggl(  \sum_{k=1}^\infty  \frac{q^{k(d+2)} \xi^k t^k}{k} \Biggr)
  {\rm exp} \Biggl(  \sum_{k=1}^\infty  \frac{q^{-kd} \xi^k t^k}{k} \Biggr) 
  \\
& ={\rm exp} \Bigl( - {\rm ln} (1-qt)\Bigr)
{\rm exp}\Bigl(  - {\rm ln} (1-q^{d+2} \xi t)\Bigr) 
{\rm exp} \Bigl( - {\rm ln} (1-q^{-d} \xi t) \Bigr)
 \\
 &=
 (1-qt)^{-1}(1- q^{d+2}\xi t)^{-1}(1-  q^{-d}\xi t)^{-1} \\
 &= R^\vee(t),
\end{align*}
and consequently $R(t)=E(t)$.

\end{proof}
\noindent For the sake of completeness, we mention some matrix representations of the maps in Definition \ref{def:maps}.
\begin{remark}\rm In \cite[Section~10]{evalTetq} we defined 24 bases for $V$, and for each basis we gave the matrices that represent  $x_{01}$, $x_{12}$, $x_{23}$, $x_{30}$. For one of these bases the representing matrices are
shown below.
\bigskip

\centerline{
\begin{tabular}[t]{c|cccc}
   map & $x_{01}$ & $x_{12}$ & $x_{23}$ & $x_{30}$ 
   \\
   \hline
   matrix &
     $ S_{q^{-1}}(\xi^{-1})$ &$Z E_q Z$ & $K_{q^{-1}}$ & $G_{q^{-1}} (\xi)$
									             \end{tabular}}
\medskip

\noindent The definitions of $Z$, $K_q$,  $E_q$, $G_q(\xi)$, $S_q(\xi)$ can be found in \cite[Appendix A]{evalTetq}.							           
\end{remark}

\section{Distance-regular graphs}

\noindent Recall the field $\mathbb R$ of real numbers.
Throughout this section, assume that $\mathbb F = \mathbb R$. 
\medskip

\noindent 
In the topic of algebraic graph theory, there is a family of finite  undirected graphs, said to be distance-regular \cite{banIto}, \cite{bbit}, \cite{BCN}, \cite{dkt}, \cite{drg}. We refer the reader to these works, 
for background information about the concepts and notation used below.
\medskip

\noindent
There is a kind of distance-regular
graph, said to have classical parameters $(d, b, \alpha, \beta)$; see \cite[Section~6.1]{BCN}. The parameter $d$ is the diameter of the graph \cite[p.~433]{BCN}. 
The parameters $b,\alpha, \beta $ are real numbers used to describe the intersection numbers of the graph \cite[p.~1]{BCN}.
Throughout this section, we fix a distance-regular graph $\Gamma$  that has diameter $d\geq 3$ and classical parameters $(d,b,\alpha, \beta)$ 
with $b \not=1$ and $\alpha=b-1$.
The condition on $\alpha$ implies that $\Gamma$ is formally self-dual in the sense of \cite[p.~49]{BCN}. By \cite[Proposition~6.2.1]{BCN} $b$ is an integer and $b \not=0$, $b \not=-1$. Note that $b$ is not a root of unity.
Let $X$ denote the
vertex set of $\Gamma$. Let ${\rm Mat}_X(\mathbb R)$ denote the algebra  of  matrices that have rows and columns indexed by $X$ and all entries in $\mathbb R$.
Let $\mathbb V=\mathbb R^X$
denote the vector space  consisting of the column vectors whose coordinates are indexed
by $X$  and whose entries are in $\mathbb R$. Note that ${\rm Mat}_X(\mathbb R)$ acts on $\mathbb V$ by left multiplication.
 We endow $\mathbb V$ with the bilinear  form $\langle\,,\,\rangle$ that satisfies $\langle u,v\rangle = u^t v$ for $u,v \in \mathbb V$, where $t$ denotes transpose.
 Note that $\langle\,,\,\rangle$ is symmetric.
   For $B \in {\rm Mat}_X(\mathbb R)$,
 \begin{align}
 \label{ex:uBu}
  \langle Bu,v\rangle = \langle u, B^t v\rangle \qquad \qquad u,v\in \mathbb V.
  \end{align}
Let $\mathbb A \in {\rm Mat}_X(\mathbb R)$ denote the adjacency matrix of $\Gamma$ \cite[Section~7]{drg}. The matrix $\mathbb A$ is symmetric, and each entry is $0$ or $1$.
For the rest of this section, fix $x \in X$  and let $\mathbb A^*=\mathbb A^*(x) \in {\rm Mat}_X(\mathbb R)$ denote the dual adjacency matrix of $\Gamma$ with respect to $x$ \cite[Section~7]{drg}. The matrix $\mathbb A^*$ 
is diagonal. 
Let $\mathbb T = \mathbb T(x)$ denote the subalgebra of ${\rm Mat}_X(\mathbb R)$ generated by $\mathbb A, \mathbb A^*$. The algebra $\mathbb T$ is called
the {\it subconstituent algebra} (or {\it Terwilliger algebra}) of $\Gamma$ with respect to $x$; see \cite[Definition~3.3]{terwSub1}. By construction, $\mathbb T$ is closed
under the transpose map. 
\medskip


\noindent We comment on the $\mathbb T$-modules.
By a {\it $\mathbb T$-module}, we mean a subspace $V \subseteq \mathbb V$ such that $\mathbb T V \subseteq V$. Let $V$ denote a $\mathbb T$-module, and let $V'$ denote a $\mathbb T$-module contained in
$V$. Then 
by \cite[p.~802]{goTer}, the orthogonal complement of $V'$ in $V$ is a $\mathbb T$-module. Consequently, each $\mathbb T$-module is an orthogonal direct sum
of irreducible $\mathbb T$-modules.  In particular, the $\mathbb T$-module $\mathbb V$ is an orthogonal direct sum of irreducible $\mathbb T$-modules. 
 By \cite[Example~1.4]{TD00} the elements $\mathbb A$, $\mathbb A^*$
act on each irreducible $\mathbb T$-module as a TD pair.
\medskip

\noindent For convenience,  we now adjust $\mathbb A$ and $ \mathbb A^*$.
By \cite[Corollary~8.4.4]{BCN}, for  $\mathbb A$ and $\mathbb A^*$ the roots of the minimal polynomial have the form
\begin{align*}
r b^{-i} + s \qquad \qquad (0 \leq i \leq d), 
\end{align*}
where $r,s \in \mathbb R$ and $r\not=0$.
Define $A, A^* \in {\rm Mat}_X(\mathbb R)$ such that
\begin{align*}
\mathbb A = A+ s I, \qquad \qquad \mathbb A^* = A^* + sI.
\end{align*}
By construction, for $A$ and $A^*$ the roots of the minimal polynomial are $\lbrace r b^{-i}\rbrace_{i=0}^d$.
By construction, $A$ and $A^*$ are symmetric. 
By construction, the algebra $\mathbb T$ is generated by $A, A^*$.
By \cite[Lemma~9.4]{drg}, both
\begin{align*}
A^3 A^* - (b+ b^{-1}+1) A^2 A^*A+ (b+ b^{-1}+1) A A^* A^2 - A^* A^3 &= 0,\\
A^{* 3} A - (b+ b^{-1}+1) A^{* 2}  AA^*+ (b+ b^{-1}+1) A^* A A^{*2} - A A^{*3} &= 0.
\end{align*}
Thus
$A$, $A^*$ satisfy the 
$q$-Serre relations, where $q$ is a complex number such that $q^2=b$. The scalar $q$ is nonzero, and not a root of unity. We caution the reader that $q \not\in \mathbb R$ if $b<-1$.

\begin{lemma}\label{lem:aasg} There exists an algebra homomorphism $\natural: U^+_q \to \mathbb T$ that sends $W_0 \mapsto A$ and 
$W_1 \mapsto A^*$. The map $\natural $ is surjective.
\end{lemma}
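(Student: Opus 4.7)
The proof is essentially a direct application of the universal property of $U^+_q$. My plan is as follows.

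First I would recall Definition \ref{def:nposp}, which presents $U^+_q$ by the two generators $W_0, W_1$ subject only to the two $q$-Serre relations \eqref{eq:nqSerre1}. By the universal property of this presentation, to produce an algebra homomorphism from $U^+_q$ into any algebra $\mathcal A$ that sends $W_0 \mapsto a$ and $W_1 \mapsto a^*$, it suffices to check that the chosen images $a, a^* \in \mathcal A$ satisfy the $q$-Serre relations.

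Next I would verify this hypothesis for $\mathcal A = \mathbb T$, $a = A$, $a^* = A^*$. This verification has already been carried out in the paragraph immediately preceding the lemma: the two cubic identities displayed there, which are taken from \cite[Lemma~9.4]{drg}, are precisely the $q$-Serre relations with $b = q^2$. Hence the universal property yields the desired algebra homomorphism $\natural: U^+_q \to \mathbb T$ with $\natural(W_0) = A$ and $\natural(W_1) = A^*$.

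Finally, I would dispatch surjectivity by a one-line argument. The image of $\natural$ is a subalgebra of $\mathbb T$ that contains both $A$ and $A^*$. But it was noted in the paragraph above that $\mathbb T$ is generated (as an algebra) by $A$ and $A^*$. Therefore the image of $\natural$ is all of $\mathbb T$, and $\natural$ is surjective. There is no serious obstacle here; the entire content of the lemma has been prepared in the lines just above its statement, and the role of the lemma is merely to package those facts in the form of a homomorphism that will be used to transport results about $U^+_q$ (in particular, about its alternating elements) to the subconstituent algebra $\mathbb T$.
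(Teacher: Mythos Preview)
Your proposal is correct and follows the same approach as the paper: the paper's own proof consists of the two sentences ``The matrices $A, A^*$ satisfy the $q$-Serre relations. Moreover $A, A^*$ generate $\mathbb T$.'' You have simply unpacked the universal-property reasoning that underlies those two sentences.
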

\begin{proof} 
The matrices $A, A^*$  satisfy the $q$-Serre relations. Moreover $A, A^*$ generate $\mathbb T$.
\end{proof}

\noindent
In Lemma \ref{lem:nAAut} we mentioned an
antiautomorphism $\dagger$ of $U^+_q$ that fixes each of $W_0$, $W_1$. 
\begin{lemma} \label{lem:cd}
The following diagram commutes:
\begin{equation*}
{\begin{CD}
U^+_q @>\natural  >>
               {\mathbb T}
              \\
         @V \dagger VV                   @VV t V \\
         U^+_q @>>\natural >
                                  {\mathbb  T}
                        \end{CD}} \qquad \qquad {\mbox{\rm $t$ = transpose.}}
\end{equation*}
\end{lemma}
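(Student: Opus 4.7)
The plan is to verify commutativity by checking that both composite maps agree on the generators $W_0, W_1$ of $U^+_q$, and then to invoke a uniqueness principle for antihomomorphisms out of $U^+_q$.

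First I would observe that both composites $\natural \circ \dagger$ and $t \circ \natural$ are antihomomorphisms from $U^+_q$ to $\mathbb T$. Indeed, $\natural$ is an algebra homomorphism by Lemma \ref{lem:aasg}, the map $\dagger$ is an antiautomorphism of $U^+_q$ by Lemma \ref{lem:nAAut}, and the transpose $t$ is an antiautomorphism of $\mathbb T$ (since $\mathbb T$ is closed under transpose, as noted just before the discussion of $\mathbb T$-modules). The composition of a homomorphism with an antihomomorphism is an antihomomorphism in either order, so both $\natural\circ\dagger$ and $t\circ\natural$ are antihomomorphisms $U^+_q\to \mathbb T$.

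Second, two antihomomorphisms from $U^+_q$ to an algebra are equal as soon as they agree on the generating set $\{W_0,W_1\}$, because $U^+_q$ is generated by $W_0,W_1$ (Definition \ref{def:nposp}) and an antihomomorphism is determined by its values on a generating set. So it suffices to evaluate both composites on $W_0$ and $W_1$.

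On $W_0$: the map $\dagger$ fixes $W_0$, hence $\natural(\dagger(W_0)) = \natural(W_0) = A$; and $t(\natural(W_0)) = A^t = A$, where the last equality holds because $A = \mathbb A - sI$ is symmetric (the adjacency matrix $\mathbb A$ is symmetric). Similarly on $W_1$: $\dagger$ fixes $W_1$, so $\natural(\dagger(W_1)) = A^*$, and $t(\natural(W_1)) = (A^*)^t = A^*$, since $A^* = \mathbb A^* - sI$ is symmetric (indeed, $\mathbb A^*$ is diagonal). Thus both antihomomorphisms agree on $W_0$ and $W_1$, and we conclude $\natural\circ\dagger = t\circ\natural$, which is precisely the commutativity of the diagram.

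No step here is a genuine obstacle; the content is entirely the symmetry of $A$ and $A^*$ (which is inherited from the adjacency and dual adjacency matrices via the affine adjustment by $sI$) together with the formal fact that an antihomomorphism out of $U^+_q$ is determined by its values on the two defining generators.
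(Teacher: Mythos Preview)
Your proof is correct and takes essentially the same approach as the paper: the paper's proof simply says to chase the generators $W_0$, $W_1$ around the diagram using $A^t = A$ and $(A^*)^t = A^*$, and your proposal spells out exactly this argument with the added explicit observation that both composites are antihomomorphisms determined by their values on the generating set.
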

\begin{proof} Chase the generators $W_0$, $W_1$ of $U^+_q$ around the diagram, using the fact that $A^t=A$ and $(A^*)^t=A^*$.
\end{proof}
\begin{definition} \label{def:agt}
\rm Recall the alternating elements of $U^+_q$ from Section 8. For each alternating element, we retain the same notation for its image under $\natural$.
These images will be called the {\it alternating elements of $\mathbb T$}.
\end{definition}

\noindent We just defined the alternating elements
\begin{align*}
\lbrace  W_{-k}\rbrace_{k \in \mathbb N}, \quad 
\lbrace  W_{k+1}\rbrace_{k \in \mathbb N}, \quad
\lbrace  G_{k+1}\rbrace_{k\in \mathbb N}, \quad
\lbrace {\tilde G}_{k+1}\rbrace_{k \in \mathbb N}
\end{align*}
of $\mathbb T$. By construction, these elements have all entries in $\mathbb R$.
In the next two lemmas we emphasize some additional features of these elements.
\begin{lemma}
For $k, \ell \in \mathbb N$ the
following relations hold in $\mathbb T$:
\begin{align*}
&
\lbrack  W_{-k},  W_{-\ell}\rbrack=0,  \qquad 
\lbrack  W_{k+1},  W_{\ell+1}\rbrack= 0,
\\
&
\lbrack  G_{k+1},  G_{\ell+1}\rbrack=0,
\qquad 
\lbrack {\tilde G}_{k+1},  {\tilde G}_{\ell+1}\rbrack= 0.
\end{align*}
\end{lemma}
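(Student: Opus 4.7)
The plan is to observe that this is an immediate consequence of Lemma \ref{lem:nrel2} combined with Lemma \ref{lem:aasg}. By Lemma \ref{lem:nrel2}, the four families of commutation relations hold in $U^+_q$. By Lemma \ref{lem:aasg}, the map $\natural : U^+_q \to \mathbb T$ is an algebra homomorphism, and by Definition \ref{def:agt}, the alternating elements of $\mathbb T$ are the images under $\natural$ of the corresponding alternating elements of $U^+_q$.

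Concretely, I would apply $\natural$ to each of the four relations in Lemma \ref{lem:nrel2}. Since $\natural$ is an algebra homomorphism, it preserves commutators: for any $X, Y \in U^+_q$ we have $\natural(\lbrack X, Y\rbrack) = \lbrack \natural(X), \natural(Y)\rbrack$. Applying this to $\lbrack W_{-k}, W_{-\ell}\rbrack = 0$ in $U^+_q$ yields $\lbrack W_{-k}, W_{-\ell}\rbrack = 0$ in $\mathbb T$, where on the right-hand side the symbols now denote the images in $\mathbb T$. The same argument handles the other three families.

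There is no real obstacle here; the only thing to be careful about is the notational overloading introduced in Definition \ref{def:agt}, where the same symbols $W_{-k}, W_{k+1}, G_{k+1}, \tilde G_{k+1}$ are used for the elements of $U^+_q$ and for their images in $\mathbb T$. Once this convention is acknowledged, the proof reduces to a single line: apply the homomorphism $\natural$ from Lemma \ref{lem:aasg} to the relations in Lemma \ref{lem:nrel2}.
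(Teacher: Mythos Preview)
Your proof is correct and matches the paper's approach exactly: the paper's proof is the single line ``By Lemma \ref{lem:nrel2}.'' You have simply made explicit the use of the homomorphism $\natural$ from Lemma \ref{lem:aasg} and the notational convention from Definition \ref{def:agt}, which the paper leaves implicit.
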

\begin{proof} By Lemma \ref{lem:nrel2}.
\end{proof}

\begin{lemma} \label{lem:tran}  Referring to the alternating elements of $\mathbb T$,
the following hold for $k \in \mathbb N$:
\begin{enumerate}
\item[\rm (i)] $W_{-k}$ and $W_{k+1}$ are symmetric;
\item[\rm (ii)] $G_{k+1}$ and ${\tilde G}_{k+1}$ are the transposes of each other.
\end{enumerate}
\end{lemma}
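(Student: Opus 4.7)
The plan is to derive everything from the commutative diagram of Lemma \ref{lem:cd} combined with the description of $\dagger$ on alternating elements from Lemma \ref{lem:nsigSact}(ii). Recall that the alternating elements of $\mathbb T$ are defined in Definition \ref{def:agt} as the images under $\natural$ of the alternating elements of $U^+_q$, and that Lemma \ref{lem:cd} gives $\natural(\dagger(u)) = \natural(u)^t$ for every $u \in U^+_q$.

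First I would handle part (i). Fix $k \in \mathbb N$. By Lemma \ref{lem:nsigSact}(ii) we have $\dagger(W_{-k}) = W_{-k}$ in $U^+_q$. Applying $\natural$ to both sides and using the commutative diagram gives
\begin{align*}
W_{-k}^t = \natural(W_{-k})^t = \natural(\dagger(W_{-k})) = \natural(W_{-k}) = W_{-k}
\end{align*}
inside $\mathbb T$, where the first and last equalities use Definition \ref{def:agt}. Hence $W_{-k}$ is symmetric. The same argument with $W_{-k}$ replaced by $W_{k+1}$, using $\dagger(W_{k+1}) = W_{k+1}$ from Lemma \ref{lem:nsigSact}(ii), shows that $W_{k+1}$ is symmetric.

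Next I would handle part (ii). By Lemma \ref{lem:nsigSact}(ii) we have $\dagger(G_{k+1}) = \tilde G_{k+1}$ in $U^+_q$. Applying $\natural$ and invoking Lemma \ref{lem:cd} yields
\begin{align*}
G_{k+1}^t = \natural(G_{k+1})^t = \natural(\dagger(G_{k+1})) = \natural(\tilde G_{k+1}) = \tilde G_{k+1}
\end{align*}
in $\mathbb T$. Interchanging the roles of $G_{k+1}$ and $\tilde G_{k+1}$ (or equivalently transposing the previous equation and using $(M^t)^t = M$) gives $\tilde G_{k+1}^t = G_{k+1}$, completing the proof.

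There is no real obstacle here; the only subtlety is remembering that Definition \ref{def:agt} reuses the symbols $W_{-k}, W_{k+1}, G_{k+1}, \tilde G_{k+1}$ for both the $U^+_q$-elements and their $\natural$-images in $\mathbb T$, so one must be careful to apply Lemma \ref{lem:cd} at the correct stage. Once that identification is made explicit, the proof is a one-line application of the commutative diagram for each of the four kinds of alternating elements.
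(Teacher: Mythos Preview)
Your proof is correct and follows exactly the same approach as the paper, which simply cites Lemma \ref{lem:nsigSact}(ii) and Lemma \ref{lem:cd}. You have merely written out explicitly the chase through the commutative diagram that those two citations encode.
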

\begin{proof} By Lemma \ref{lem:nsigSact}(ii) and Lemma \ref{lem:cd}.
\end{proof}

\noindent We have seen that the alternating elements $\lbrace W_{-k}\rbrace_{k \in \mathbb N}$  of $\mathbb T$ are mutually commuting,
symmetric, and have all entries in $\mathbb R$. Therefore the alternating elements $\lbrace W_{-k}\rbrace_{k \in \mathbb N}$  of $\mathbb T$ can be simultaneously diagonalized. Similar comments
apply to the alternating elements $\lbrace W_{k+1}\rbrace_{k \in \mathbb N}$  of $\mathbb T$.
\medskip

\noindent We now give our fifth main result.

\begin{theorem} \label{thm:orthog} Each  irreducible $\mathbb T$-module is an orthogonal direct sum of its common eigenspaces for $\lbrace W_{-k}\rbrace_{k \in \mathbb N}$,
and an orthogonal direct sum of its common eigenspaces for $\lbrace W_{k+1}\rbrace_{k \in \mathbb N}$.
\end{theorem}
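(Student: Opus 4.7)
The plan is to observe that, on each irreducible $\mathbb T$-module, the family $\lbrace W_{-k}\rbrace_{k\in \mathbb N}$ restricts to a commuting family of real symmetric operators, and then invoke the standard spectral theorem for such families with respect to the restricted inner product.

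First, let $V$ denote an irreducible $\mathbb T$-module. Since $W_{-k}\in \mathbb T$ for each $k\in \mathbb N$, each $W_{-k}$ stabilizes $V$. By Lemma \ref{lem:nrel2} the elements $\lbrace W_{-k}\rbrace_{k\in \mathbb N}$ mutually commute, so their restrictions to $V$ mutually commute. By Lemma \ref{lem:tran}(i) each $W_{-k}$ is symmetric as a matrix in ${\rm Mat}_X(\mathbb R)$, so \eqref{ex:uBu} gives $\langle W_{-k}u,v\rangle = \langle u, W_{-k}v\rangle$ for all $u,v\in \mathbb V$, whence the same identity holds on $V$.

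Second, the form $\langle u,v\rangle = u^t v$ on $\mathbb V = \mathbb R^X$ is the standard positive-definite inner product, and its restriction to $V$ is therefore positive definite, in particular nondegenerate. Each $W_{-k}\vert_V$ is a single real symmetric operator on this inner product space, hence orthogonally diagonalizable with real eigenvalues. Because the operators in $\lbrace W_{-k}\vert_V\rbrace_{k\in \mathbb N}$ mutually commute, they preserve each other's eigenspaces, and an induction on the (finitely many distinct) operators arising on the finite-dimensional space $V$ produces a decomposition $V = \bigoplus_\chi V_\chi$ into common eigenspaces indexed by the characters $\chi$ of the family. To see that this decomposition is orthogonal, suppose $u\in V_\chi$ and $v\in V_{\chi'}$ with $\chi\ne \chi'$, and choose $k$ with $\chi(W_{-k})\ne \chi'(W_{-k})$; then
\begin{align*}
\chi(W_{-k})\langle u,v\rangle = \langle W_{-k}u,v\rangle = \langle u, W_{-k}v\rangle = \chi'(W_{-k})\langle u,v\rangle,
\end{align*}
forcing $\langle u,v\rangle = 0$. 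This establishes the first assertion.

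For the second assertion, the identical argument applies to $\lbrace W_{k+1}\rbrace_{k\in \mathbb N}$: by Lemma \ref{lem:nrel2} these elements mutually commute, by Lemma \ref{lem:tran}(i) each is symmetric, and they stabilize $V$ because they lie in $\mathbb T$. The one point that could conceivably have caused trouble is the nondegeneracy of the form on the submodule $V$, but this is automatic here from the positive-definiteness of the standard inner product on $\mathbb R^X$. Hence no genuine obstacle arises, and the result follows.
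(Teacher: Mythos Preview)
Your proof is correct and follows essentially the same approach as the paper: the paper's one-line proof simply cites the preceding remarks (that the $W_{-k}$ are mutually commuting real symmetric matrices, hence simultaneously diagonalizable) together with the orthogonality of eigenspaces of a symmetric matrix, while you spell out these same steps in detail on the restriction to $V$. The only difference is the level of explicitness; in particular, your care in noting that the restricted form on $V$ is positive definite is a point the paper takes for granted.
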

\begin{proof} By the comments above the theorem statement, and since the eigenspaces of a real symmetric matrix are mutually orthogonal.
\end{proof}

\noindent We now give our sixth main result.
\begin{theorem} \label{thm:orthog2} Let $V$ denote an  irreducible $\mathbb T$-module. 
Then $V$ is a direct sum of its common eigenspaces for $\lbrace G_{k+1}\rbrace_{k \in \mathbb N}$,
and a direct sum of its common eigenspaces for $\lbrace {\tilde G}_{k+1}\rbrace_{k \in \mathbb N}$.
 For both of these  direct sums, each summand  is
nonorthogonal to a unique summand in the other direct sum. Let $W$ and $\tilde W$ denote nonorthogonal summands in the first direct sum and second direct sum, respectively.
Then for $k \in \mathbb N$ the eigenvalue of $G_{k+1}$ for $W$ is equal to the eigenvalue of $\tilde G_{k+1}$ for $\tilde W$.
\end{theorem}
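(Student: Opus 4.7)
The plan is to deduce the theorem from two ingredients: the transpose relation $G_{k+1}^t = \tilde{G}_{k+1}$ from Lemma \ref{lem:tran}(ii), and the positive definiteness of the standard inner product $\langle\,,\,\rangle$ on $\mathbb{V} = \mathbb{R}^X$.

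First, I would obtain the two decompositions. By Theorem \ref{thm:orthog}, the irreducible $\mathbb{T}$-module $V$ has a basis of common eigenvectors for $\lbrace W_{-k}\rbrace_{k\in \mathbb{N}}$, i.e.\ a basis of type (i). Lemma \ref{lem:assume} then produces bases of type (iii) and (iv), giving direct sum decompositions $V = \bigoplus_i W_i$ and $V = \bigoplus_j \tilde{W}_j$ into common eigenspaces for $\lbrace G_{k+1}\rbrace_{k \in \mathbb{N}}$ and $\lbrace \tilde{G}_{k+1}\rbrace_{k \in \mathbb{N}}$ respectively. Each summand $W_i$ is characterized by a system of scalars $(\lambda_k^{(i)})_{k \in \mathbb{N}}$, where $G_{k+1}$ acts on $W_i$ as $\lambda_k^{(i)} I$, and distinct summands have distinct systems; likewise each $\tilde{W}_j$ is characterized by a unique system $(\mu_k^{(j)})_{k \in \mathbb{N}}$.

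Next, I would exploit the transpose identity. For $w \in W_i$, $\tilde{w} \in \tilde{W}_j$, and $k \in \mathbb{N}$, Lemma \ref{lem:tran}(ii) together with \eqref{ex:uBu} yields
\begin{align*}
\lambda_k^{(i)}\,\langle w, \tilde{w}\rangle
= \langle G_{k+1} w, \tilde{w}\rangle
= \langle w, G_{k+1}^t \tilde{w}\rangle
= \langle w, \tilde{G}_{k+1}\tilde{w}\rangle
= \mu_k^{(j)}\,\langle w, \tilde{w}\rangle.
\end{align*}
Hence if $W_i$ and $\tilde{W}_j$ are nonorthogonal, then $\lambda_k^{(i)} = \mu_k^{(j)}$ for every $k \in \mathbb{N}$; this is precisely the eigenvalue-matching claim in the final sentence of the theorem.

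Finally, I would turn this into the bijective pairing using positive definiteness. Because $\langle\,,\,\rangle$ is positive definite on $\mathbb{V}$, its restriction to $V$ remains positive definite, hence nondegenerate. If $W_i$ were orthogonal to every $\tilde{W}_j$, it would be orthogonal to $V = \bigoplus_j \tilde{W}_j$ and so equal to $0$; thus $W_i$ is nonorthogonal to at least one $\tilde{W}_j$. By the previous paragraph, any such $\tilde{W}_j$ must have eigenvalue system equal to $(\lambda_k^{(i)})_{k\in \mathbb{N}}$, and since the $\tilde{W}_j$'s are indexed by \emph{distinct} systems, there is at most one. The symmetric argument, starting from $\tilde{W}_j$, yields the reverse direction. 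I don't expect a serious obstacle here; the only mildly subtle point is that the seemingly infinite condition "$\lambda_k^{(i)} = \mu_k^{(j)}$ for all $k$" is finite in disguise, since $\mathbb{T}$ is finite-dimensional and hence only finitely many of the $G_{k+1}$ are linearly independent, so each eigenvalue system is determined by finitely many of its entries.
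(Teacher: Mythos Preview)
Your proposal is correct and follows essentially the same approach as the paper: the paper's proof simply invokes Lemma~\ref{lem:assume} together with Theorem~\ref{thm:orthog} for the existence of the two direct sums, and then cites \eqref{ex:uBu} and Lemma~\ref{lem:tran}(ii) for the remaining assertions. You have fleshed out exactly the argument behind those citations---the eigenvalue-matching via the transpose identity and the bijectivity of the pairing via positive definiteness---so there is nothing to correct.
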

\begin{proof} The two direct sums exist by Lemma \ref{lem:assume} and Theorem \ref{thm:orthog}.
The other assertions follow from  \eqref{ex:uBu} and Lemma  \ref{lem:tran}(ii).
\end{proof}

\begin{note}\rm (See
\cite[Example~8.4]{drg}.)
The following distance-regular graphs have classical parameters $(d,b,\alpha, \beta)$ with $b \not=1$ and $\alpha=b-1$:
 \begin{enumerate}
 \item[\rm (i)]
the bilinear forms graph \cite[p.~280]{BCN};
 \item[\rm (ii)] the alternating forms graph \cite[p.~282]{BCN};
 \item[\rm (iii)] the Hermitean forms graph \cite[p.~285]{BCN};
 \item[\rm (iv)] the quadratic forms graph \cite[p.~290]{BCN},
 \item[\rm (v)] the affine $E_6$ graph \cite[p.~340]{BCN};
\item[\rm (vi)] the extended ternary Golay code graph \cite[p.~359]{BCN}. 
\end{enumerate}
\end{note}
\medskip

\section{Directions for further research}

\noindent In this section we list some conjectures and open problems.

\begin{problem}\rm Referring to the distance-regular graph $\Gamma$ and the  subconstituent algebra $\mathbb T$ from Section 11,
let $V$ denote an irreducible $\mathbb T$-module. Consider
 four bases for $V$, obtained by taking a common eigenbasis for $\lbrace W_{-k}\rbrace_{k \in \mathbb N}$,
$\lbrace W_{k+1}\rbrace_{k \in \mathbb N}$,
$\lbrace G_{k+1}\rbrace_{k \in \mathbb N}$, 
$\lbrace {\tilde G}_{k+1}\rbrace_{k \in \mathbb N}$, respectively. Normalize the four bases in an attractive fashion.
For all pairs of bases among the four, find the transition matrices.
For each of the four bases, find the matrices that represent the alternating elements of $\mathbb T$.
\end{problem}

\begin{conjecture} \rm Referring to the distance-regular graph $\Gamma$ and the  subconstituent algebra $\mathbb T$ from Section 11,
for every irreducible $\mathbb T$-module the common eigenspaces for
$\lbrace W_{-k}\rbrace_{k \in \mathbb N}$ or
$\lbrace W_{k+1}\rbrace_{k \in \mathbb N}$ or $\lbrace G_{k+1}\rbrace_{k \in \mathbb N}$
or $\lbrace {\tilde G}_{k+1}\rbrace_{k \in \mathbb N}$ all have dimension one.
\end{conjecture}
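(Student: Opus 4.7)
The plan is to fix an irreducible $\mathbb T$-module $V$ and show that the commuting family $\lbrace W_{-k}\rbrace_{k\in \mathbb N}$ acts on $V$ with simple joint spectrum; the statements for $\lbrace W_{k+1}\rbrace$, $\lbrace G_{k+1}\rbrace$, $\lbrace\tilde G_{k+1}\rbrace$ should then follow by applying the automorphism $\sigma$ of $U^+_q$ together with the antiautomorphism $\dagger$ of Lemma \ref{lem:nAAut}, and by invoking Lemma \ref{lem:assume}. Because $W_0=A$ lies in the family $\lbrace W_{-k}\rbrace_{k\in \mathbb N}$, each common eigenspace is contained in some $E_jV$, of dimension $\rho_j$, and by Lemma \ref{lem:EWE}(i) each $W_{-k}$ preserves $E_jV$. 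So the problem reduces to showing that on each $E_jV$ the commutative family $\lbrace W_{-k}\rbrace_{k\geq 1}$ has $\rho_j$ distinct joint eigenvalues; equivalently, that its image in ${\rm End}(E_jV)$ spans a maximal commutative subalgebra. When $\rho_j=1$ the claim is automatic (this is the Leonard situation of Section 10), so the content of the conjecture is concentrated on the blocks with $\rho_j > 1$.

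Using the isomorphism $\Psi$ of Lemma \ref{lem:del} together with the argument in the proof of Lemma \ref{lem:assume}, the $W_{-k}$-spectrum on $E_jV$ is tied to the $G_{k+1}$-spectrum on $U_{d-j}$, and by Theorem \ref{thm:GGaction} each $G_{k+1}$ genuinely stabilizes $U_i$ for every $i$. So the core problem is to show that on each $U_i$ the family $\lbrace G_{k+1}\rbrace_{k\in \mathbb N}$ has $\rho_i$ distinct joint eigenvalues. The concrete approach I would pursue is to extend the $q$-tetrahedron apparatus of Section 10 from the Leonard case to the general irreducible $\mathbb T$-module setting, replacing the scalars $r_n$ from Lemma \ref{lem:si} by certain block-operators that act within each $U_i$; one would exhibit analogs of $x_{01}, x_{12}, x_{23}, x_{30}$ from Definition \ref{def:maps} whose commutation relations with $A, A^*$ yield a block-operator version of Lemma \ref{lem:si}. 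The spectrum on $U_i$ would then be analyzed using the classical parameters $(d,b,\alpha,\beta)$ and the known intersection numbers of $\Gamma$.

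The hard part is that for $\rho_j > 1$ the required separation is not a dimension count but a genuine spectral statement about the block-operators on $U_i$, and it may well fail for abstract irreducible $U^+_q$-modules of $q$-Serre type that do not come from a distance-regular graph. A natural intermediate target is the case-by-case analysis suggested by the six graph families listed at the end of Section 11 (bilinear, alternating, Hermitean, quadratic forms, affine $E_6$, extended ternary Golay), for which the irreducible $\mathbb T$-modules admit known descriptions via quantum groups and quantum symmetric pairs and the relevant matrix recursions can be made explicit. A uniform argument, if one exists, would likely proceed by recognizing the restriction of the alternating elements to $E_jV$ as coming from the evaluation on $V$ of a larger commutative subalgebra, for instance inside the alternating central extension $\mathcal U^+_q$ discussed in Section 1, whose action on such $V$ is semisimple with simple spectrum. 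Establishing this semisimple-with-simple-spectrum property uniformly is, I expect, the crux of the conjecture and the main obstacle to a general proof; a computer-algebra verification for the smallest cases of the Golay and affine $E_6$ graphs should give quick empirical support before attempting the case-free argument.
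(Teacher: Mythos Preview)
The statement you are addressing is labeled a \emph{Conjecture} in the paper and appears in Section~12 (``Directions for further research''); the paper offers no proof. So there is no paper argument to compare against, and your write-up is, appropriately, a research strategy rather than a proof. You correctly identify the main obstruction: once $\rho_j>1$, what is needed is a genuine spectral separation statement for the commuting family on each block $E_jV$ (equivalently on each $U_i$), and nothing in the paper's toolkit supplies this.

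Two remarks on the strategy itself. First, your reduction of the $\lbrace W_{k+1}\rbrace$ case to the $\lbrace W_{-k}\rbrace$ case via $\sigma$ is fine, since swapping $A\leftrightarrow A^*$ again gives an irreducible $\mathbb T$-module. But the reduction for $\lbrace G_{k+1}\rbrace$ and $\lbrace\tilde G_{k+1}\rbrace$ via Lemma~\ref{lem:assume} is not automatic: that lemma produces a basis of common eigenvectors, yet the proof shows that a $W_{-k}$-eigenvector $v\in E_jV$ with eigenvalues $(\omega_{-k})_k$ maps under $\Psi$ to a $G_k$-eigenvector with eigenvalues $(\omega_{-k}/\omega_0)_k$. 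To conclude one-dimensionality of the $G_k$ joint eigenspaces you would still need to rule out collisions of these rescaled tuples across different $j$, since $G_{k+1}$ stabilizes each $U_i$ but a joint eigenspace could in principle meet several $U_i$. Second, your suggestion to look for block-operator analogs of $x_{01},x_{12},x_{23},x_{30}$ extending Section~10 is reasonable, but be aware that the $q$-tetrahedron module structure used there relies on the Leonard (shape $(1,\ldots,1)$) hypothesis in an essential way; for general shape the relevant replacement is a $U_q(\widehat{\mathfrak{sl}}_2)$-module structure as in \cite{tdanduqsl2hat}, and the tensor-product-of-evaluation-modules viewpoint of Problem~12.3 is probably the more natural route to simple joint spectrum than a direct block generalization of Lemma~\ref{lem:si}.
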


\begin{problem}\rm We refer to the nontrivial TD system $\Phi$ on $V$ that has $q$-Serre type, as in Section 9.
 In \cite{tdanduqsl2hat} we used $\Phi$ to turn $V$ into a $U\sb q(\widehat{\mathfrak{sl}}\sb 2)$-module.
 We did this in two ways; see \cite[Theorems~13.1,13.2]{tdanduqsl2hat}.
 For the  $U\sb q(\widehat{\mathfrak{sl}}\sb 2)$-module $V$ in \cite[Theorems~13.1]{tdanduqsl2hat},
 the weight space decomposition is 
 $\lbrack 0^*, D\rbrack$. For the $U\sb q(\widehat{\mathfrak{sl}}\sb 2)$-module $V$ in \cite[Theorem~13.2]{tdanduqsl2hat}, the weight space decomposition is 
 $\lbrack D^*, 0\rbrack$.
Assume that $\mathbb F$ is algebraically closed and
characteristic zero. By \cite{charp},  the $U\sb q(\widehat{\mathfrak{sl}}\sb 2)$-module $V$ is a tensor product of evaluation modules.
An evaluation module is a direct sum of its weight spaces, and these weight spaces all have dimension one. Thus the
$U\sb q(\widehat{\mathfrak{sl}}\sb 2)$-module $V$ becomes a direct sum,
with each summand  a tensor product of evaluation module weight spaces. These summands have dimension one.
Investigate how these
summands are related to the common eigenspaces for $\lbrace W_{-k}\rbrace_{k \in \mathbb N}$ or
$\lbrace W_{k+1}\rbrace_{k \in \mathbb N}$ or $\lbrace G_{k+1}\rbrace_{k \in \mathbb N}$
or $\lbrace {\tilde G}_{k+1}\rbrace_{k \in \mathbb N}$.
The article \cite[Section~4.2]{basAlt} might be useful in this direction.
\end{problem}

\begin{problem}\rm Our main results are about TD pairs of $q$-Serre type. Find analogous results for general TD pairs.
\end{problem}

\section{Acknowledgement} The author thanks Pascal Baseilhac for many discussions about $U^+_q$ and its alternating elements.
\section{Funding} This research did not receive any specific grant from funding agencies in the public, commercial, or not-for-profit sectors.
\section{Declarations of competing interest} No conflicts of interest.

\bigskip

\noindent Paul Terwilliger \hfil\break
\noindent Department of Mathematics \hfil\break
\noindent University of Wisconsin \hfil\break
\noindent 480 Lincoln Drive \hfil\break
\noindent Madison, WI 53706-1388 USA \hfil\break
\noindent email: {\tt terwilli@math.wisc.edu }\hfil\break

\end{document}